\newcommand{\Z}{\ensuremath{\mathbb{Z}}}
\newcommand{\R}{\ensuremath{\mathbb{R}}}
\newcommand{\CC}{\ensuremath{\mathbb{C}}}
\newcommand{\Tr}{\operatorname{tr}}
\newcommand{\Weil}[1]{\ensuremath{\mathrm{Weil}_{#1}}}	
\newcommand{\dd}{\mathop{}\!\mathrm{d}}
\newcommand{\mes}{\operatorname{mes}}	
\newcommand{\lrangle}[1]{\ensuremath{\langle #1 \rangle}}
\newcommand{\sgn}{\ensuremath{\mathrm{sgn}}}
\newcommand{\identity}{\ensuremath{\mathrm{id}}}
\newcommand{\rightiso}{\ensuremath{\stackrel{\sim}{\rightarrow}}}
\newcommand{\Hm}{\operatorname{H}}	
\newcommand{\Ker}{\operatorname{ker}}
\newcommand{\Image}{\operatorname{im}}
\newcommand{\Ad}{\operatorname{Ad}}
\newcommand{\Gm}{\ensuremath{\mathbb{G}_\mathrm{m}}}
\newcommand{\GL}{\operatorname{GL}}
\newcommand{\SO}{\operatorname{SO}}
\newcommand{\Or}{\operatorname{O}}
\newcommand{\SL}{\operatorname{SL}}
\newcommand{\Sp}{\operatorname{Sp}}
\newcommand{\GSp}{\operatorname{GSp}}
\newcommand{\Lgrp}[1]{\ensuremath{{}^{\mathrm{L}} #1}}	
\newcommand{\Mp}{\ensuremath{\widetilde{\mathrm{Sp}}}}
\newcommand{\MMp}{\operatorname{Mp}}
\newcommand{\bmu}{\ensuremath{\bm\mu}}
\newcommand{\bpsi}{{\ensuremath{\uppsi}}}
\newcommand{\Endo}{\ensuremath{\mathcal{E}}}
\newcommand{\orbI}{\ensuremath{\mathcal{I}}}
\newcommand{\elli}{\operatorname{ell}}
\newcommand{\asp}{\ensuremath{\dashrule[.7ex]{2 2 2 2}{.4}}} 
\newcommand{\rev}{\ensuremath{\mathbf{p}}} 
\newcommand{\Trans}{\ensuremath{\mathcal{T}}}	
\newcommand{\trans}{\ensuremath{\check{\mathcal{T}}}}	
\theoremstyle{plain}
\newtheorem{proposition}{Proposition}
\newtheorem{lemma}[proposition]{Lemma}
\newtheorem{theorem}[proposition]{Theorem}
\newtheorem{corollary}[proposition]{Corollary}
\theoremstyle{definition}
\newtheorem{definition}[proposition]{Definition}
\newtheorem{definition-theorem}[proposition]{Definition--Theorem}
\newtheorem{definition-proposition}[proposition]{Definition--Proposition}
\newtheorem{remark}[proposition]{Remark}
\theoremstyle{plain}
\theoremstyle{definition}
\numberwithin{equation}{section}
\numberwithin{proposition}{subsection}
\numberwithin{Thm}{section}	
\numberwithin{Rmk}{section}	
\title{Variation of additive characters in the transfer for $\MMp(2n)$}
\author{Wen-Wei Li}
\date{}
\renewcommand{\l@section}{\@dottedtocline{1}{1.5em}{2.0em}}
\renewcommand{\l@subsection}{\@dottedtocline{2}{4.0em}{3.0em}}
\begin{document}
	
\maketitle

\begin{abstract}
	Let $\mathrm{Mp}(2n)$ be the metaplectic group of rank $n$ over a local field $F$ of characteristic zero. In this note, we determine the behavior of endoscopic transfer for $\mathrm{Mp}(2n)$ under variation of additive characters of $F$. The arguments are based on properties of transfer factor, requiring no deeper results from representation theory. Combined with the endoscopic character relations of Luo, this provides a simple and uniform proof of a theorem of Gan--Savin, which describes how the local Langlands correspondence for $\mathrm{Mp}(2n)$ depends on the additive characters.
\end{abstract}
	
{\scriptsize
	\begin{tabular}{ll}
		\textbf{MSC (2020)} & Primary 22E50; Secondary 11F70, 11F72 \\
		\textbf{Keywords} & endoscopy, metaplectic group, local Langlands correspondence
	\end{tabular}}

\setcounter{tocdepth}{1}
\tableofcontents
	
\section{Introduction}\label{sec:intro}
\subsection{Overview}
Let $F$ be a local field of characteristic zero. Consider the symplectic group $\Sp(W)$ associated with a symplectic $F$-vector space $(W, \lrangle{\cdot|\cdot})$ of dimension $2n$. Put $\bmu_m := \{z \in \CC^{\times}: z^m = 1\}$ for all $m$. The metaplectic covering is a central extension of topological groups
\begin{equation*}
	1 \to \bmu_2 \to \MMp(W) \xrightarrow{\rev} \Sp(W) \to 1.
\end{equation*}
It plays important roles in various scenarios, such as the $\Theta$-correspondence, and there is also an adélic counterpart over number fields of great arithmetic interest.

Weil's original construction of $\Mp(W)$ involves $(W, \lrangle{\cdot|\cdot})$ as well as a chosen additive character $\bpsi: F \to \CC^{\times}$ (unitary, non-trivial). Later on, $\Mp(W)$ is characterized as the unique non-trivial twofold coverings of $\Sp(W)$ when $F \neq \CC$, up to unique isomorphisms, and it splits uniquely when $F = \CC$; see \cite[Theorem 10.4]{Mo68}.

In the literature, it is customary to write $\Sp(W) = \mathrm{Sp}(2n)$ and $\MMp(W) = \mathrm{Mp}(2n)$. We are interested in the genuine representations of $\MMp(W)$, i.e.\ those representations on which $z \in \bmu_2$ acts as $z \cdot \identity$. Although $\MMp(W)$ is not the group of $F$-points of some connected reductive $F$-group, Gan and Savin \cite{GS1} used $\Theta$-correspondences to obtain a local Langlands correspondence (LLC) for $\MMp(W)$ when $F$ is non-Archimedean, in which $\Sp(2n, \CC)$ plays the role of the Langlands dual group of $\MMp(W)$. The Archimedean counterpart is due to Adams and Barbasch \cite{AB98}.
	
Later on, based on endoscopy for metaplectic groups \cite{Li11} and the global multiplicity formula of Gan--Ichino \cite{GI18}, C.\ Luo \cite{Luo20} proved the endoscopic character relations for $\MMp(W)$ and characterized the correspondences above in terms of endoscopic transfer.

For $\MMp(W)$, the LLC and endoscopic transfer both depend on the choices of $\lrangle{\cdot|\cdot}$ and $\bpsi$; more precisely, on $\bpsi \circ \lrangle{\cdot|\cdot}$. The dependence is elucidated completely by \cite[Theorem 12.1]{GS1} for non-Archimedean $F$, to be reviewed in Theorem \ref{prop:GS-preview}. The proof in \textit{loc.\ cit.}\ is surprisingly roundabout: besides advanced properties of $\Theta$-lifting, it also used the local Gross--Prasad conjecture for special orthogonal groups. On the other hand, the Archimedean case should be contained in \cite{AB98}, or follows from similar arguments, but there seems to be no written account.

The aim of this note is to offer a direct endoscopic proof of the aforementioned result, which applies uniformly to all $F$. Given Luo's character relations, we will deduce it from a variation formula for the endoscopic transfer for orbital integrals.
	
\subsection{Main results and proofs}\label{sec:main-result}
Let $G := \Sp(W)$ and $\tilde{G}^{(2)} := \MMp(W)$. Recall that $\tilde{G}^{(2)}$ is independent of $\bpsi$, up to unique isomorphisms. On the other hand, in the theory of endoscopy, it is more convenient to enlarge it to an eightfold covering $\tilde{G}$ by pushing out through $\bmu_2 \hookrightarrow \bmu_8$. All constructions on the level of $\tilde{G}$ will depend on $\bpsi$ and $\lrangle{\cdot|\cdot}$.

A representation of $\tilde{G}$ is said to be genuine if $z \in \bmu_8$ acts as $z \cdot \identity$. The study of genuine representations of $\tilde{G}$ is equivalent to that of $\tilde{G}^{(2)}$.

For every bounded L-parameter $\phi$ for $\tilde{G}$, defined by postulating
\[ \tilde{G}^\vee := \Sp(2n, \CC) = \SO(2n+1)^\vee, \]
we set $S_\phi := Z_{\tilde{G}^\vee}(\Image(\phi))$ and let $\EuScript{S}_\phi$ be its component group, which is finite abelian. Let $\EuScript{S}_\phi^\vee$ be the Pontryagin dual of $\EuScript{S}_\phi$. For all $\chi \in \EuScript{S}_\phi^\vee$, the LLC in \cite{GS1} gives a tempered genuine irreducible representation $\pi_{\phi, \chi}$ of $\tilde{G}$. An exponent $\bpsi$ indicates its dependence on additive characters.

\begin{theorem}[= Theorem \ref{prop:GS}]\label{prop:GS-preview}
	Given $c \in F^{\times}$, define the additive character $\bpsi_c$ of $F$ by $\bpsi_c(x) = \bpsi(cx)$.
	For all bounded L-parameter $\phi$ and $\chi \in \EuScript{S}_\phi^\vee$, we have
	\[ \pi^{\bpsi_c}_{\phi\zeta, \chi \delta_c} \simeq \pi^{\bpsi}_{\phi, \chi}, \]
	where
	\begin{itemize}
		\item $\zeta: \Weil{F} \to \bmu_2 \simeq Z_{\tilde{G}^\vee}$ is the homomorphism attached to the coset $cF^{\times 2}$ by local class field theory, which can be used to twist L-parameters, so that $S_\phi = S_{\zeta\phi}$;
		\item $\delta_c \in \EuScript{S}_\phi^\vee$ is explicitly defined in terms of local root numbers and $\zeta(-1)$ (Definition \ref{def:delta-c}).
	\end{itemize}
\end{theorem}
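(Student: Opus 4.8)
The plan is to combine the endoscopic character relations of Luo \cite{Luo20} with a direct computation of how the metaplectic transfer factor of \cite{Li11} varies under $\bpsi \mapsto \bpsi_c$. The decisive point is that the elliptic endoscopic groups of $\tilde{G}$ — products of odd special orthogonal groups $\SO(2a+1)\times\SO(2b+1)$ with $a+b=n$ — are honest connected reductive groups whose local Langlands correspondence does not involve $\bpsi$ at all, so the entire $\bpsi$-dependence of the picture for $\tilde{G}$ is carried by the transfer factor, and one never has to analyse the packets of the orthogonal groups. Concretely, fix a bounded $\phi$, a semisimple $s \in S_\phi$, the associated elliptic endoscopic datum $\mathbf{H}_s$ with its $L$-embedding $\xi_s$ of dual groups, and the $\mathbf{H}_s$-parameter $\phi_s$ characterised by $\xi_s\circ\phi_s=\phi$. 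By \cite{Luo20}, for each additive character the transfer of the stable character $S\Theta^{\mathbf{H}_s}_{\phi_s}$ equals the combination $\sum_{\chi \in \EuScript{S}_\phi^\vee}\lrangle{s s_\phi^{-1},\chi}\,\Theta_{\pi_{\phi,\chi}}$ of characters in the packet, so everything comes down to comparing the two transfers attached to $\bpsi$ and $\bpsi_c$.

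The key formula I would establish is that, for matching pairs $(\gamma_{\mathbf{H}},\tilde\gamma)$,
\[ \Delta_{\bpsi_c}^{\xi_s}(\gamma_{\mathbf{H}},\tilde\gamma) = c_\phi(s)\cdot\Delta_{\bpsi}^{\zeta\xi_s}(\gamma_{\mathbf{H}},\tilde\gamma), \]
where $\zeta\xi_s$ denotes $\xi_s$ twisted by the $Z_{\tilde{G}^\vee}$-valued cocycle attached to $\zeta$, and — this is the crux — $c_\phi(s)$ is a \emph{constant}, independent of $(\gamma_{\mathbf{H}},\tilde\gamma)$. To prove this I would write the transfer factor of \cite{Li11} as the product of its Langlands--Shelstad part (which, in that normalisation, already involves $\bpsi$ through the term $\Delta_{II}$) and its genuine metaplectic part, which is a Weil index $\gamma_{\bpsi}(Q_\gamma)$ of a quadratic form $Q_\gamma$ attached to $\gamma$ by a Cayley transform. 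Under $\bpsi\mapsto\bpsi_c$ the metaplectic part is multiplied by $\gamma_{\bpsi}(cQ_\gamma)/\gamma_{\bpsi}(Q_\gamma)$, which by the standard Weil-index identities equals $\bigl(\gamma_{\bpsi}(\lrangle{c})/\gamma_{\bpsi}(\lrangle{1})\bigr)^{\dim Q_\gamma}\cdot(c,\mathrm{disc}\,Q_\gamma)_F$; the Langlands--Shelstad part is multiplied by the usual product of $\lambda$-factor ratios, i.e.\ by Hilbert symbols $(c,\cdot)_F$; and replacing $\xi_s$ by $\zeta\xi_s$ multiplies the Langlands--Shelstad part by a factor $\lrangle{\mathrm{inv}(\gamma_{\mathbf{H}}),\zeta}$. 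The content of the computation is that the three $\gamma_{\mathbf{H}}$-dependent Hilbert-symbol contributions cancel — which amounts to identifying $\mathrm{disc}\,Q_\gamma$ with the product of the Cayley discriminants that govern the $\lambda$-factors and the class $\mathrm{inv}(\gamma_{\mathbf{H}})$ — so that only a constant $c_\phi(s)$ survives. Using the classical dictionary relating $\gamma_{\bpsi}$-factors, $\lambda$-factors and local root numbers $\epsilon(1/2,\cdot)$, together with the fact that $\dim Q_\gamma$ is the dimension of the $(-1)$-eigenspace of $s$, one computes $c_\phi(s)$ explicitly; combined with the normalisation of packet base points in \cite{Luo20}, it yields exactly the character $\delta_c$ of Definition \ref{def:delta-c}.

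Granting this, the theorem follows at once. The pair $(\zeta\xi_s,\phi_s)$ realises the $\tilde{G}$-parameter $\zeta\phi$; feeding the displayed transfer-factor identity into \cite{Luo20}, applied on one side with additive character $\bpsi_c$ and parameter $\phi$ and on the other with additive character $\bpsi$ and parameter $\zeta\phi$ (and using $\EuScript{S}_\phi = \EuScript{S}_{\zeta\phi}$), we obtain for every semisimple $s$ an identity
\[ \sum_{\chi \in \EuScript{S}_\phi^\vee}\lrangle{s s_\phi^{-1},\chi}\,\Theta_{\pi^{\bpsi_c}_{\phi,\chi}} = c_\phi(s)\sum_{\chi \in \EuScript{S}_\phi^\vee}\lrangle{s s_{\zeta\phi}^{-1},\chi}\,\Theta_{\pi^{\bpsi}_{\zeta\phi,\chi}}. \]
Expanding both sides over $\EuScript{S}_\phi$ and matching coefficients, using the linear independence of the genuine characters $\Theta$, we see that the two packets coincide and that the label correspondence is $\chi \mapsto \chi\delta_c$, i.e.\ $\pi^{\bpsi_c}_{\phi,\chi}\simeq\pi^{\bpsi}_{\zeta\phi,\chi\delta_c}$ (the relabelling is an involution, as $\EuScript{S}_\phi$ is an elementary abelian $2$-group). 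Relabelling $\phi\mapsto\phi\zeta$ and $\chi\mapsto\chi\delta_c$ gives Theorem \ref{prop:GS}.

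The main obstacle is the middle step: the cancellation of the $\gamma_{\mathbf{H}}$-dependent Hilbert symbols and the exact evaluation of the residual constant $c_\phi(s)$. This requires unwinding the explicit transfer factor of \cite{Li11} — in particular pinning down the quadratic form $Q_\gamma$ and computing $\mathrm{disc}\,Q_\gamma$ in terms of the matched regular element — carrying out the bookkeeping of the Weil-index and $\lambda$-factor identities, and matching the result term by term with the root-number and $\zeta(-1)$ recipe of Definition \ref{def:delta-c}; one must also check that the normalisations of Haar measures and of the base points $s_\phi$, $s_{\zeta\phi}$ used in \cite{Luo20} contribute no stray constant. By contrast, since transfer factors, Weil indices and $\lambda$-factors are defined uniformly over local fields of characteristic zero, the Archimedean case needs no separate argument.
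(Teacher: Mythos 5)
Your overall strategy coincides with the paper's: combine Luo's character relations with a variation formula for the metaplectic transfer factor. But the step you yourself flag as "the main obstacle" is precisely the mathematical content, and your plan for it has problems. First, the ratio of transfer factors cannot be a constant $c_\phi(s)$ depending on $\phi$: transfer factors know nothing about L-parameters, and in fact the ratio is \emph{not} constant even for fixed $s$ — the paper's Proposition \ref{prop:Delta-var} shows it equals $(c,-1)_{F,2}^{n''}$ times the nonconstant character $s^!_c(\gamma)$ built from spinor norms; the root numbers in $\delta_c$ enter only later, through the normalization $\epsilon(\phi^{s=-1})$ in the character relation \eqref{eqn:T}, not through the geometric side. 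Second, your plan to extract the variation by unwinding the explicit formula of \cite{Li11} into a "Langlands--Shelstad part" and a Weil-index part does not engage the genuine difficulty, which is covering-theoretic: $\Delta^{\bpsi}$ and $\Delta^{\bpsi_c}$ must be compared at the \emph{same} lift $\tilde{\delta}\in\tilde{G}^{(2)}$, and the $\bpsi$- and $\bpsi_c$-cocycles (Schrödinger models) see that lift differently. The paper handles this not by explicit computation but structurally: transport of structure along $\Ad(g)$ for $g\in\GSp(W)$ with $\nu(g)=c$ (Lemma \ref{prop:Ad-transport}), the observation that $\Ad(g)(\tilde{\delta})$ is \emph{not} stably conjugate to $\tilde{\delta}$ in Adams' sense until corrected by the calibration sign $(c, N_{K|F}(\omega))_{F,2}$ (Lemma \ref{prop:st-conj}), the computation of $\mathrm{inv}(\delta,\Ad(g)(\delta))$ and the cocycle property of $\Delta$ (Lemma \ref{prop:Delta-cocycle}), and the spinor-norm formula of Lemma \ref{prop:SN-formula}. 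Your Weil-index/$\lambda$-factor bookkeeping, as described, has no slot for the calibration phenomenon, so even granting diligence the claimed cancellation is not just unproven but aimed at the wrong shape of statement.

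There is also a logical gap in your endgame, tied to your assertion that "one never has to analyse the packets of the orthogonal groups." Luo's relation is proved for the standard factorization, i.e.\ for the pair $(\xi_s, \zeta\phi_s)$ realizing $\zeta\phi$, not for your twisted pair $(\zeta\xi_s,\phi_s)$. To convert one into the other — equivalently, to know that multiplying stable orbital integrals on $\SO(2n'+1)\times\SO(2n''+1)$ by the spinor-norm character $(c,\mathrm{SN}(\cdot))_{F,2}$ takes $S\Theta_{\phi^!}$ to $S\Theta_{\zeta\phi^!}$ — you need exactly the paper's Proposition \ref{prop:zeta-packet} and Corollary \ref{prop:Upsilon-effect}, which require a (mild but genuine) analysis of Arthur's packets for the odd orthogonal groups via $R$-groups, since no reference states it. With that ingredient supplied and the variation formula actually established in the form $s^!_c(\gamma)\Delta^{\bpsi_c}(\gamma,\tilde{\delta})=(c,-1)_{F,2}^{n''}\Delta^{\bpsi}(\gamma,\tilde{\delta})$ (Theorem \ref{prop:Trans-var}), your coefficient-matching argument does go through and is essentially the paper's deduction (carried out via \cite[Proposition 9.2.3]{Li24a}); your final relabelling $\phi\mapsto\phi\zeta$, $\chi\mapsto\chi\delta_c$ is fine. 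Your closing remark that the argument is uniform in $F$ is correct and is indeed one of the paper's points.
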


The above recovers \cite[Theorem 12.1]{GS1}. This statement appeared first as \cite[Conjecture 11.3]{GGP1}. We shall deduce it from the following result about endoscopic transfer.

Elliptic endoscopic data $\mathbf{G}^!$ of $\tilde{G}$ are in bijection with pairs $(n', n'') \in \Z_{\geq 0}^2$ with $n' + n'' = n$. The corresponding endoscopic group is
\[ G^! = \SO(2n'+1) \times \SO(2n''+1) \]
where the $\SO$ groups are split; note that this description is insensitive to $\bpsi$. Denote the endoscopic transfer of orbital integrals from $\tilde{G}$ to $G^!(F)$ by $\Trans_{\mathbf{G}^!, \tilde{G}}$. The existence of transfer is the main result of \cite{Li11}, and the endoscopic character relations are given by its dual (i.e.\ transpose) $\trans_{\mathbf{G}^!, \tilde{G}}$. Transfer can be defined on the level of $\tilde{G}^{(2)}$, and we add an exponent $\bpsi$ to indicate its dependence on additive characters.

Denote by $(\cdot, \cdot)_{F, 2}$ the quadratic Hilbert symbol on $F^{\times} \times F^{\times}$.

\begin{theorem}[= Theorem \ref{prop:Trans-var}]\label{prop:Trans-var-preview}
	Let $c \in F^{\times}$ and define the corresponding $\zeta: \Weil{F} \to \bmu_2$ as before. Let $\mathbf{G}^!$ be an elliptic endoscopic datum corresponding to $(n', n'')$. Then
	\begin{align*}
		\Upsilon^\zeta \circ \Trans^{\bpsi_c}_{\mathbf{G}^!, \tilde{G}^{(2)}} & = (c, -1)_{F, 2}^{n''} \Trans^{\bpsi}_{\mathbf{G}^!, \tilde{G}^{(2)}}, \\
		\trans^{\bpsi_c}_{\mathbf{G}^!, \tilde{G}^{(2)}} \circ \Upsilon_\zeta & = (c, -1)_{F, 2}^{n''} \trans^{\bpsi}_{\mathbf{G}^!, \tilde{G}^{(2)}},
	\end{align*}
	where $\Upsilon^{\zeta}$ is the involution on the space of stable orbital integrals on $G^!(F)$ by multiplication by the character
	\begin{align*}
		s^!_c: G^!(F) & \to \bmu_2 \\
		(\gamma', \gamma'') & \mapsto (c, \mathrm{SN}(\gamma'))_{F, 2} (c, \mathrm{SN}(\gamma''))_{F, 2},
	\end{align*}
	with $\mathrm{SN}$ being the spinor norm, and $\Upsilon_\zeta$ is its dual.
\end{theorem}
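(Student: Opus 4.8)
The plan is to work directly with the explicit formula for the metaplectic transfer factor $\Delta$ and track its dependence on $\bpsi$. Recall that the transfer $\Trans_{\mathbf{G}^!, \tilde{G}}$ sends a genuine function $\tilde{f}$ on $\tilde{G}$ to a stable test function $f^!$ on $G^!(F)$ via the identity $f^!(\gamma^!) = \sum_{\delta} \Delta(\gamma^!, \delta)\, O_\delta(\tilde{f})$, where $\gamma^! = (\gamma', \gamma'')$ runs over the relevant strongly regular semisimple classes and $\delta$ ranges over matching classes in $\tilde{G}$. Everything on the right-hand side except $\Delta$ is manifestly independent of $\bpsi$, so the whole statement reduces to computing the ratio $\Delta^{\bpsi_c}(\gamma^!,\delta)\big/\Delta^{\bpsi}(\gamma^!,\delta)$ and showing it equals $(c,-1)_{F,2}^{n''}\cdot s_c^!(\gamma^!)^{-1}$ (or the appropriate sign, with the Hilbert-symbol identity $(c,-1)_{F,2}^{n''} = (c,(-1)^{n''})_{F,2}$ used freely). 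The second displayed equation then follows from the first by taking transposes, since $\Upsilon_\zeta$ is dual to $\Upsilon^\zeta$ and $(c,-1)_{F,2}^{n''}$ is a scalar.

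The key steps, in order, are as follows. First I would recall the factorization of the metaplectic transfer factor from \cite{Li11} as a product $\Delta = \Delta_0 \cdot \Delta_{\mathrm{I}} \cdot \Delta_{\mathrm{II}} \cdot \Delta_{\mathrm{III}} \cdot \Delta_{\mathrm{IV}}$ (the Langlands--Shelstad-type decomposition adapted to the metaplectic setting, including the genuine Weil-index normalization term), and identify exactly which factors involve $\bpsi$. The $\bpsi$-dependence enters through the Weil index $\gamma_{\bpsi}$ of certain quadratic forms built from the eigenvalues of $\gamma^!$ relative to $\Sp$ versus $\SO$; this is the place where the covering is genuinely metaplectic rather than linear. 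Second, I would use the standard behavior of the Weil index under rescaling of the additive character: $\gamma_{\bpsi_c}(Q) = (c, \det Q)_{F,2}^{?}\,\gamma_{\bpsi_c}(1)^{\dim Q}\,\gamma_{\bpsi}(Q)$, together with Weil's formula $\gamma_{\bpsi_c}(1) = (c,\cdot)$-type corrections, to express the ratio $\Delta^{\bpsi_c}/\Delta^{\bpsi}$ purely in terms of Hilbert symbols of $c$ against discriminants of the relevant forms. Third, I would match those discriminants with the spinor norms $\mathrm{SN}(\gamma')$, $\mathrm{SN}(\gamma'')$ of the two $\SO$-components and with a $\gamma^!$-independent term that accounts for the rank of the second factor (the source of $(c,-1)_{F,2}^{n''}$ and the $\zeta(-1)$-type contribution); the bookkeeping here is dimension-counting for the two blocks $\SO(2n'+1)$ and $\SO(2n''+1)$, where only the second contributes asymmetrically because of how the metaplectic/orthogonal dichotomy is normalized.

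The main obstacle I anticipate is the precise normalization bookkeeping: getting the exact power of $c$ in the Weil-index rescaling formula right, keeping track of $\gamma_{\bpsi}(1)$ versus $\gamma_{\bpsi}(a)$ conventions, and ensuring the $\gamma^!$-dependent part collapses exactly to $s_c^!$ (a product of two Hilbert symbols, one per $\SO$-factor) while the leftover $\gamma^!$-independent part collapses exactly to $(c,-1)_{F,2}^{n''}$ and nothing more. A secondary subtlety is the role of $\zeta(-1)$: since $\zeta$ corresponds to $cF^{\times2}$ under local class field theory, $\zeta(-1) = (c,-1)_{F,2}$, so the $n''$-th power of this sign must be shown to be exactly what survives after all spinor-norm-dependent terms are absorbed into $\Upsilon^\zeta$. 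Once the transfer-factor ratio is pinned down, descending from $\tilde{G}$ (eightfold) to $\tilde{G}^{(2)}$ is routine, since the $\bmu_8/\bmu_2$-part of the cover carries no $\bpsi$-dependence beyond what is already recorded.
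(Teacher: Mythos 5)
Your formal reduction is the same as the paper's: both identities follow by dualization and by comparing stable orbital integrals once one knows the pointwise transfer-factor identity $s^!_c(\gamma)\,\Delta^{\bpsi_c}(\gamma,\tilde{\delta}) = (c,-1)_{F,2}^{n''}\,\Delta^{\bpsi}(\gamma,\tilde{\delta})$ (this is Proposition \ref{prop:Delta-var} in the paper). The gap is that you do not actually prove this identity; everything hard is deferred to ``bookkeeping''. Moreover, your starting premise is off: the metaplectic transfer factor of \cite{Li11} is not given by a Langlands--Shelstad factorization $\Delta_0\Delta_{\mathrm{I}}\Delta_{\mathrm{II}}\Delta_{\mathrm{III}}\Delta_{\mathrm{IV}}$; it is built from the character of the Weil representation together with an elementary factor, so the object you propose to dissect is not the one at hand. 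Even granting a Weil-index description of the $\bpsi$-dependence, your rescaling step is left with an undetermined exponent, and the rescaling $\gamma_{\bpsi}(cQ)$ produces not only Hilbert symbols against discriminants but also eighth roots of unity of the form $\gamma_{\bpsi}(c)/\gamma_{\bpsi}(1)$ to various powers; you give no argument that these cancel. You also give no argument that the $\gamma$-dependent part of the ratio is invariant on stable classes and equals exactly the spinor-norm character $s^!_c$ (this requires the explicit spinor-norm formula of Lemma \ref{prop:SN-formula}, via \cite{FKS19}, and careful tracking of the sign twist $x''\mapsto -x''$ in the matching of classes, which is what produces the auxiliary element $a''$ and ultimately the constant $(c,-1)_{F,2}^{n''}$ in \eqref{eqn:Delta-var-0}--\eqref{eqn:Delta-var-1}); nor that the ratio is independent of the conjugacy class of $\delta$ within its stable class, which is needed for it to factor out of the sum over $\delta$.

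The paper's route avoids any explicit computation with the transfer factor's formula. One observes that $\Delta$ depends only on $\bpsi\circ\lrangle{\cdot|\cdot}$, so replacing $\bpsi$ by $\bpsi_c$ is a transport of structure by any $g\in\GSp(W)$ with $\nu(g)=c$ (Lemma \ref{prop:Ad-transport}): $\Delta^{\bpsi_c}(\gamma,\tilde{\delta}) = \Delta^{\bpsi}(\gamma,\Ad(g)(\tilde{\delta}))$. Then $\Ad(g)(\tilde{\delta})$ is only stably conjugate to $\tilde{\delta}$ after multiplication by an explicit calibration sign $(c,N_{K|F}(\omega))_{F,2}$ in Adams' sense (Lemma \ref{prop:st-conj}, using \cite{Li20}), and the cocycle property of $\Delta$ together with the computation $\mathrm{inv}(\delta,\Ad(g)(\delta)) = c\,N_{K|K^{\natural}}(K^{\times})$ converts conjugating by $g$ into the explicit sign $\sgn''(c)$ (Lemma \ref{prop:Delta-cocycle}). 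The remaining work is exactly the spinor-norm and Hilbert-symbol bookkeeping you postponed. If you want to pursue your direct computation, the calibration issue and the non-sign Weil-index factors are the two places where it is most likely to break, and the spinor-norm matching will in any case have to be supplied along the lines of Lemma \ref{prop:SN-formula}.
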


The description of $\Upsilon_\zeta$ in terms of bounded L-parameters is straightforward: it is translation by $\zeta$ on both $\SO$ factors. See Corollary \ref{prop:Upsilon-effect}.

These results are used in \cite[\S\S 9--10]{Li24a} to describe certain Arthur packets of $\tilde{G}$.

Luo's character relation is applied to deduce Theorem \ref{prop:GS-preview} from Theorem \ref{prop:Trans-var-preview}, as performed in \cite[Proposition 9.2.3]{Li24a}. Luo's proof does not involve \cite[Theorem 12.1]{GS1}, and neither does \cite{GI18}, so there is no worry of circularity.

Below is a sketch of the proof of Theorem \ref{prop:Trans-var-preview}. It reduces to a property of transfer factors $\Delta$ (Proposition \ref{prop:Delta-var}). To prove the latter result, we pick any $g \in \GSp(W)$ with similitude factor $c$. The automorphism $\Ad(g)$ of $G(F)$ lifts to $\tilde{G}^{(2)}$ and $\tilde{G}$, relating the transfer factors for $\bpsi$ and $\bpsi_c$ by a transport of structure. To complete the proof, we need the following ingredients:
\begin{itemize}
	\item description of $\mathrm{SN}$ in terms of a convenient parametrization of stable conjugacy classes that is used in \cite{Li11} (Lemma \ref{prop:SN-formula});
	\item determine the relative position between $\tilde{\delta}$ and $\Ad(g)(\tilde{\delta})$ as an element of $\Hm^1(F, Z_G(\delta))$, for all $\tilde{\delta} \in \tilde{G}^{(2)}$ with regular semisimple image $\delta \in G(F)$.
\end{itemize}
For the first ingredient about spinor norms, we refer to \cite[\S 5.1]{FKS19}. The second ingredient is relatively subtle: $\Ad(g)(\tilde{\delta})$ must be calibrated by a sign to make it stably conjugate to $\tilde{\delta}$ in the sense of Adams (see \cite[\S 9.1]{Li20}). Fortunately, most of the required computations have been done in \cite{Li20}.

We then conclude by the cocycle property \cite[Proposition 5.13]{Li11} of metaplectic transfer factors.

The contents are organized as follows. In \S\S\ref{sec:review-endoscopy}--\ref{sec:review-LLC}, we give a summary about metaplectic groups, the metaplectic theory of endoscopy, LLC, and the endoscopic character relation. In \S\ref{sec:SN}, we collect the required properties of spinor norms and describe how the corresponding characters affect the L-parameters. In \S\ref{sec:var}, we prove Proposition \ref{prop:Delta-var}, Theorem \ref{prop:Trans-var} and then Theorem \ref{prop:GS} following the strategy sketched above. Moreover, in the final \S\ref{sec:remark-L}, we put these results in the context of L-groups for coverings, following Weissman \cite{Weis18} and Gan--Gao \cite{GG}, and discuss Prasad's conjecture on contragredients for $\tilde{G}^{(2)}$ briefly.

The author is grateful to Fei Chen and Caihua Luo for helpful conversations.

This research is supported by NSFC, Grant No.\ 11922101 and 12321001.

\subsection{Conventions}
All representations are realized on $\CC$-vector spaces. For every $m \in \Z_{\geq 1}$, we write $\bmu_m := \{z \in \CC^{\times}: z^m = 1 \}$.

For every group $\Gamma$ and $g \in \Gamma$, denote by $\Ad(g)$ the automorphism $\gamma \mapsto g\gamma g^{-1}$ of $\Gamma$.

Throughout this article, $F$ is a local field of characteristic zero unless otherwise specified. The Weil group of $F$ is denoted by $\Weil{F}$, and the local Langlands group of $F$ is
\[ \mathcal{L}_F := \begin{cases}
	 \Weil{F}, & F \;\text{is Archimedean} \\
	 \Weil{F} \times \SL(2, \CC), & F \;\text{is non-Archimedean.}
\end{cases}\]
The quadratic Hilbert symbol on $F^{\times} \times F^{\times}$ is denoted by $(\cdot, \cdot)_{F, 2}$.

An additive character of $F$ means a continuous, unitary and non-trivial homomorphism $\bpsi: F \to \CC^{\times}$. For $c \in F^{\times}$, we obtain $\bpsi_c$ with $\bpsi_c(x) = \bpsi(cx)$.

For an algebraic group $R$ over $F$, denote its center (resp.\ identity connected component) by $Z_R$ (resp.\ $R^{\circ}$). Denote the group of $F$-points of $R$ by $R(F)$. For all $\delta \in R(F)$, let $R^\delta$ be the centralizer of $\delta$ in $R$ and $R_\delta := (R^\delta)^{\circ}$.
	
Assume $R$ is a connected reductive $F$-group. Let $R_{\mathrm{reg}} \subset R$ (resp.\ $R_{\mathrm{sreg}} \subset R$) be the regular semisimple (resp.\ strongly regular semisimple) locus, which is Zariski open and dense; recall that $\delta \in R_{\mathrm{reg}}$ is said to be strongly regular if $R_\delta = R^\delta$. As a matter of fact, for semisimple and simply connected $R$ (eg.\ symplectic groups), we have $R_{\mathrm{reg}} = R_{\mathrm{sreg}}$, but this does not hold in general (eg.\ for special orthogonal groups).

The Langlands dual group $R^\vee$ of a connected reductive $F$-group $R$ is always taken over $\CC$. Denote the L-group of $R$ as $\Lgrp{R} = R^\vee \rtimes \Weil{F}$. The set of equivalence classes of L-parameters (resp.\ bounded L-parameters) of $R$ is denoted by $\Phi(R)$ (resp.\ $\Phi_{\mathrm{bdd}}(R)$).

Assume $F$ is any field with $\mathrm{char}(F) \neq 2$ hereafter. By a symplectic (resp.\ quadratic) $F$-vector space, we mean a pair $(W, \lrangle{\cdot|\cdot})$ (resp.\ $(V, q)$ or simply $V$) where $W$ (resp.\ $V$) is a finite-dimensional $F$-vector space and $\lrangle{\cdot|\cdot}$ (resp.\ $q$) is a non-degenerate alternating bilinear form (resp.\ quadratic form) on it; we have the symplectic (resp.\ orthogonal) group $\Sp(W)$ (resp.\ $\Or(V)$).

The notation $\SO(2n+1)$ will always mean the split odd special orthogonal group of rank $n$.

For a field $E$ and an étale $E$-algebra $L$, we denote the norm map by $N_{L|E}: L \to E$.

\section{Review of endoscopy}\label{sec:review-endoscopy}
Most of the materials below are taken from \cite{Li11}. See also \cite[Remarque 2.3.1]{Li15} for a (partial) erratum.
	
\subsection{Endoscopic data of metaplectic groups}\label{sec:endoscopic-data}
Consider a symplectic $F$-vector space $(W, \lrangle{\cdot|\cdot})$ of dimension $2n$, where $n \in \Z_{\geq 1}$, and set $G := \Sp(W)$. We will consider two kinds of coverings of $G(F)$.
	
First, when $F \neq \CC$, there exists a non-trivial twofold covering
\[ 1 \to \bmu_2 \to \tilde{G}^{(2)} \xrightarrow{\rev^{(2)}} G(F) \to 1, \]
which is unique up to unique isomorphisms in the category of central extension of locally compact groups; see \cite[Theorem 10.4]{Mo68}. When $F = \CC$, we let $\tilde{G}^{(2)}$ be the trivial twofold covering $\bmu_2 \times G(F)$. In both cases, we also denote $\tilde{G}^{(2)}$ as $\MMp(W)$ or $\MMp(2n)$.
	
Secondly, let $\tilde{G}$ be the push-out of $\tilde{G}^{(2)} \xrightarrow{\rev^{(2)}} G(F)$ via $\bmu_2 \hookrightarrow \bmu_8$. This yields a central extension of locally compact groups
\[ 1 \to \bmu_8 \to \tilde{G} \xrightarrow{\rev} G(F) \to 1, \]
and there is an injective homomorphism $\iota: \tilde{G}^{(2)} \to \tilde{G}$, compatible with the homomorphisms onto $G(F)$. Such a homomorphism $\iota$ is unique since any two homomorphisms with these properties differ by a character of $G(F)$, whereas $G(F)$ equals its own derived subgroup.

When working with the eightfold covering $\tilde{G}$, a non-trivial unitary character $\bpsi: F \to \CC^{\times}$ will always be fixed tacitly. The cocycle describing $\tilde{G}$ arises from Schrödinger models for the Weil representation $\omega_{\bpsi}$, and is cleaner than that of $\tilde{G}^{(2)}$; this stems ultimately from the fact that Weil's index $\gamma_{\bpsi}(\cdot)$ for quadratic $F$-vector spaces is $\bmu_8$-valued. We shall write $\tilde{G} = \tilde{G}^{\bpsi}$ when the role of $\bpsi$ is to be emphasized.

As $\tilde{G}$ is the push-out of $\tilde{G}^{(2)}$, the genuine representations (resp.\ genuine invariant distributions) of $\tilde{G}$ and $\tilde{G}^{(2)}$ are identified.
	
With the chosen $\bpsi$ and $\lrangle{\cdot|\cdot}$, define the Langlands dual group of $\tilde{G}$ as
\[ \tilde{G}^\vee := \Sp(2n, \CC) \quad \text{with trivial Galois action.} \]
	
The set of elliptic endoscopic data of $\tilde{G}$ is defined to be
\begin{gather*}
	\Endo_{\elli}(\tilde{G}) = \Endo_{\elli}(\tilde{G}^{(2)}) := \left\{ s \in \tilde{G}^\vee : s^2 = 1 \right\} \bigg/ \tilde{G}^\vee\text{-conj.}
\end{gather*}
Elements of $\Endo_{\elli}(\tilde{G})$ are in bijection with pairs $(n' ,n'') \in \Z_{\geq 0}^2$ such that $n' + n'' = n$. In fact, $2n'$ (resp.\ $2n''$) is the multiplicity of $+1$ (resp.\ $-1$) as an eigenvalue of $s$. For each $(n', n'')$, the corresponding endoscopic group is
\[ G^! := \SO(2n' + 1) \times \SO(2n'' + 1). \]
	
Following Arthur, the elliptic endoscopic data of $\tilde{G}$ will be written as $\mathbf{G}^!$, with $G^!$ being the underlying endoscopic group.
	
\subsection{Correspondence of conjugacy classes}\label{sec:conjugacy-class}
Next, we review the parametrization of conjugacy classes in $G_{\mathrm{reg}}(F)$ following \cite[\S 3]{Li11} or \cite[\S 3.1]{Li20}. Consider the data $(K, K^\natural, x, c)$ where
\begin{itemize}
	\item $K$ is an étale $F$-algebra of dimension $2n$, endowed with an involution $\tau$;
	\item $K^\natural = \{t \in K: \tau(t) = t \}$;
	\item $x \in K^{\times}$ satisfies $\tau(x) = x^{-1}$ and $K = F[x]$;
	\item $c \in K^{\times}$ satisfies $\tau(c) = -c$.
\end{itemize}
Two data $(K, K^\natural, x, c)$ and $(K_1, K_1^\natural, x_1, c_1)$ are said to be equivalent if there exists an isomorphism of $F$-algebras $\varphi: K \rightiso K_1$ that preserves the involutions, $\varphi(x) = x_1$, and $\varphi(c) \in c_1 \cdot N_{K_1 | K_1^\natural}(K_1^{\times})$.
	
There is a natural bijection $\mathcal{O}$ from the set of equivalence classes of data $(K, K^\natural, x, c)$ onto $G_{\mathrm{reg}}(F) / \text{conj}$. To parameterize stable conjugacy classes in $G_{\mathrm{reg}}(F)$, we simply forget the datum $c$ and consider equivalence classes of $(K, K^\natural, x)$ instead.	
	
Let $H := \SO(2n+1)$. A similar parametrization applies to conjugacy classes in $H_{\mathrm{sreg}}(F)$, the only difference being that one considers data $(K, K^\natural, x, c)$ with $\tau(c) = c$ instead. The difference fades away when we focus on stable conjugacy classes.

These parametrizations above apply to all fields $F$ with $\mathrm{char}(F) \neq 2$.
	
Now fix $\mathbf{G}^! \in \Endo_{\elli}(\tilde{G})$ corresponding to $(n', n'')$. Let $\gamma = (\gamma', \gamma'') \in G^!_{\mathrm{sreg}}(F)$ (resp.\ $\delta \in G_{\mathrm{reg}}(F)$). In view of \cite[Corollaire 5.5]{Li11}, we say that $\gamma$ corresponds to $\delta$, written as $\gamma \leftrightarrow \delta$, if $\delta$ is parameterized by $(K, K^\natural, x, c)$ such that
\begin{itemize}
	\item there exists a decomposition $K = K' \times K''$, compatible with the involution $\tau$ so that $K^\natural = (K')^\natural \times (K'')^\natural$, and $x = (x', x'')$ accordingly;
	\item $\gamma'$ is parameterized by $(K', (K')^\natural, x', c')$ for some $c' \in (K')^{\times}$;
	\item $\gamma''$ is parameterized by $(K'', (K'')^\natural, -x'', c'')$ for some $c'' \in (K'')^{\times}$ (beware of the minus sign here).
\end{itemize}
This notion depends only on the stable conjugacy classes of $\gamma$ and $\delta$. If $\gamma \leftrightarrow \delta$ for some $\delta \in G_{\mathrm{reg}}(F)$, we say $\gamma$ is $G$-regular. The $G$-regular locus is Zariski open and dense in $G^!$.

Given $(K, K^\natural, x, c)$ as above, note that $\left\{x \in K^{\times}: x\tau(x) = 1 \right\}$ is the group of $F$-points of an $F$-torus, denoted by $K^1$ by abusing notations. If $\gamma \leftrightarrow \delta$, then $G^!_\gamma \simeq G_\delta$: indeed, both are isomorphic to $K^1 = (K')^1 \times (K'')^1$.

Note that the definition of $\Endo_{\elli}(\tilde{G}) = \Endo_{\elli}(\tilde{G}^{(2)})$ and the correspondence between conjugacy classes are insensitive to $\bpsi$.
		
\subsection{Transfer}\label{sec:transfer}
Fix $\bpsi$ and $(W, \lrangle{\cdot|\cdot})$. Write $\tilde{G}_{\mathrm{reg}} := \rev^{-1}\left( G_{\mathrm{reg}}(F) \right)$.

Let $\mathbf{G}^! \in \Endo_{\elli}(\tilde{G})$ correspond to $(n', n'')$. The transfer factor defined in \cite[\S 5.3]{Li11} is a map
\[ \Delta: G^!_{\mathrm{sreg}}(F) \times \tilde{G}_{\mathrm{reg}} \to \CC. \]
We record some of its basic properties below.
\begin{itemize}
	\item $\Delta(\gamma, z\tilde{\delta}) = z\Delta(\gamma, \tilde{\delta})$ for every $z \in \bmu_8$;
	\item $\Delta(\gamma, \tilde{\delta})$ depends only on the stable conjugacy class of $\gamma$ and the conjugacy class of $\tilde{\delta}$;
	\item putting $\delta = \rev(\tilde{\delta})$, we have $\Delta(\gamma, \tilde{\delta}) \neq 0$ only if $\gamma \leftrightarrow \delta$;
	\item if $\gamma \leftrightarrow \delta$ and $\tilde{\delta} \in \rev^{-1}(\delta)$, then $\Delta(\gamma, \tilde{\delta}) \in \bmu_8$.
\end{itemize}
	
A function $f: \tilde{G} \to \CC$ is said to be anti-genuine if	$f(z\tilde{x}) = z^{-1} f(\tilde{x})$ for all $\tilde{x} \in \tilde{G}$ and $z \in \bmu_8$. Denote the space of anti-genuine $C^\infty_c$-functions on $\tilde{G}$ as $C^\infty_{c, \asp}(\tilde{G})$, and observe that $G(F)$ acts by conjugation on $\tilde{G}$ and $\tilde{G}^{(2)}$.
\begin{itemize}
	\item For all $\tilde{\delta} \in \tilde{G}_{\mathrm{reg}}$ and $f \in C^\infty_{c, \asp}(\tilde{G})$, denote the corresponding normalized orbital integral as $I_{\tilde{G}}(\tilde{\delta}, f) := |D^G(\delta)|_F^{1/2} \int_{G_\delta(F) \backslash G(F)} f(g^{-1} \tilde{\delta} g) \dd g$, where $D^G(\delta)$ is the Weyl discriminant.
	\item For all $\gamma \in G^!_{\mathrm{sreg}}(F)$ and $f^! \in C^\infty_c(G^!(F))$, denote the corresponding normalized stable orbital integral as $S_{G^!}(\gamma, f^!)$.
\end{itemize}

Orbital integrals involve Haar measures: $I_{\tilde{G}}(\tilde{\delta}, f)$ (resp.\ $S_{G^!}(\gamma, f^!)$) is proportional to the Haar measure on $G(F)$ (resp.\ $G^!(F)$), and inverse proportional to the Haar measure on $G_\delta(F)$ (resp.\ $G^!_\gamma(F)$).

Define the spaces
\begin{align*}
	\orbI_{\asp}(\tilde{G}) & := C^\infty_{c, \asp}(\tilde{G}) \big/ \bigcap_{\tilde{\delta}} \Ker I_{\tilde{G}}(\tilde{\delta}, \cdot), \\
	S\orbI(G^!) & := C^\infty_c(G^!(F)) \big/ \bigcap_{\gamma} \Ker S_{G^!}(\gamma, \cdot);
\end{align*}
they carry natural structures of LF-spaces when $F$ is Archimedean. Normalized orbital integrals (resp.\ stable orbital integrals) can be evaluated on elements of $\orbI_{\asp}(\tilde{G})$ (resp.\ $S\orbI(G^!)$) once the Haar measures are chosen.

Let $\mes(G)$ (resp.\ $\mes(G^!)$) be the line spanned by Haar measures on $G(F)$ (resp.\ $G^!(F)$). The geometric transfer established in \cite{Li11} is a linear map
\begin{equation*}
	\Trans_{\mathbf{G}^!, \tilde{G}}: \orbI_{\asp}(\tilde{G}) \otimes \mes(G) \to S\orbI(G^!) \otimes \mes(G^!)
\end{equation*}
characterized as follows: $f^! = \Trans_{\mathbf{G}^!, \tilde{G}}(f)$ if and only if
\begin{equation*}
	 \sum_{\delta: \gamma \leftrightarrow \delta} \Delta(\gamma, \tilde{\delta}) I_{\tilde{G}}(\tilde{\delta}, f) = S_{G^!}(\gamma, f^!)
\end{equation*}
for all $G$-regular $\gamma$, where $\tilde{\delta}$ is an arbitrary element of $\rev^{-1}(\delta)$. The dependence of $I_{\tilde{G}}(\tilde{\delta}, \cdot)$ (resp.\ $S_{G^!}(\gamma, \cdot)$) on the Haar measure of $G(F)$ (resp.\ $G^!(F)$) is absorbed into $\mes(G)$ (resp.\ $\mes(G^!)$). On the other hand, we use compatible Haar measures on $G^!_\gamma(F)$ and $G_\delta(F)$ whenever $\gamma \leftrightarrow \delta$ (so that $G^!_\gamma \simeq G_\delta$). Therefore, $\Trans_{\mathbf{G}^!, \tilde{G}}$ involves no choice of Haar measures.

Let $D_-(\tilde{G})$ (resp.\ $SD(G^!)$) denote the linear dual of $\orbI_{\asp}(\tilde{G})$ (resp.\ $S\orbI(G^!)$), continuous for Archimedean $F$; for example, the characters (resp.\ stable characters) attached to genuine tempered irreducible representations (resp.\ to bounded L-parameters) are elements thereof. The dual of $\Trans_{\mathbf{G}^!, \tilde{G}}$ is denoted by
\[ \trans_{\mathbf{G}^!, \tilde{G}}: SD(G^!) \otimes \mes(G^!)^\vee \to D_-(\tilde{G}) \otimes \mes(G)^\vee. \]

In \cite{Li19}, it is shown that $\trans_{\mathbf{G}^!, \tilde{G}}$ sends stable virtual characters on $G^!(F)$ to genuine virtual characters on $\tilde{G}$. The map $\trans_{\mathbf{G}^!, \tilde{G}}$ is called the spectral transfer.

\section{Review of local Langlands correspondence}\label{sec:review-LLC}
\subsection{The correspondence}\label{sec:LLC}
Let $\Pi_-(\tilde{G})$ be the set of isomorphism classes of irreducible genuine representations of $\tilde{G}$. The representations are understood as smooth ones if $F$ is non-Archimedean, and as Harish-Chandra modules if $F$ is Archimedean by fixing a maximal compact subgroup.

Let $\Pi_{\mathrm{temp}, -}(\tilde{G})$ be the subset of $\Pi_-(\tilde{G})$ consisting of tempered irreducible genuine representations. For every connected reductive $F$-group $R$, define $\Pi_{\mathrm{temp}}(R) \subset \Pi(R)$ in the similar way.

On the other hand, denote by $\Phi_{\mathrm{bdd}}(\tilde{G})$ the set of equivalence classes of bounded L-parameters for $\tilde{G}$, using the definition of $\tilde{G}^\vee$ in \S\ref{sec:endoscopic-data}. For each $\phi \in \Phi_{\mathrm{bdd}}(\tilde{G})$, we have the groups
\[ S_\phi := Z_{\tilde{G}^\vee}(\Image(\phi)), \quad \EuScript{S}_\phi := \pi_0(S_\phi). \]
Then $\EuScript{S}_\phi$ is isomorphic to a finite power of $\bmu_2$; denote its Pontryagin dual by $\EuScript{S}_\phi^\vee$.

The local Langlands correspondence (LLC) for $\tilde{G}$ in the tempered setting is a decomposition
\begin{equation}\label{eqn:LLC-Mp}
	\begin{aligned}
		\Pi_{\mathrm{temp}, -}(\tilde{G}) & = \bigsqcup_{\phi \in \Phi_{\mathrm{bdd}}(\tilde{G})} \Pi_\phi, \\
		\EuScript{S}_\phi^\vee & \xleftrightarrow{1:1} \Pi_\phi \\
		\chi & \longmapsto \pi_{\phi, \chi}.
	\end{aligned}
\end{equation}
The definitions of $\Pi_\phi$ and $\pi_{\phi, \chi}$ are canonical once $\bpsi$ and $(W, \lrangle{\cdot|\cdot})$ are chosen. A truly canonical formulation requires the L-group of $\tilde{G}$, cf.\ \S\ref{sec:remark-L}.

Moreover, if we denote by $\Pi_{2, -}(\tilde{G})$ (resp.\ $\Phi_{2, \mathrm{bdd}}(\tilde{G})$) the subset of square-integrable representations (resp.\ discrete series L-parameters), then the \eqref{eqn:LLC-Mp} restricts to
\[ \Pi_{2, -}(\tilde{G}) = \bigsqcup_{\phi \in \Phi_{2, \mathrm{bdd}}(\tilde{G})} \Pi_\phi. \]

These properties also hold for the LLC established in \cite{Ar13} for quasisplit classical groups, such as $\SO(2n+1)$.

The correspondence \eqref{eqn:LLC-Mp} is established in \cite{AB98} for $F = \R$, and \cite{GS1} for non-Archimedean $F$. In these cases, it is actually given by $\Theta$-lifting: for each quadratic $F$-vector space $(V, q)$ of dimension $2n+1$ and discriminant $1$, it is proved in \textit{loc.\ cit.}\ that for every $\sigma \in \Pi_{\mathrm{temp}}(\SO(V))$, there exists a unique extension of $\sigma$ to $\Or(V)$ whose $\Theta$-lift to $\tilde{G}$ is non-zero, the genuine representation so obtained is tempered irreducible, and this procedure yields
\begin{equation}\label{eqn:LLC-Mp-Theta}
	\Pi_{\mathrm{temp}, -}(\tilde{G}) \xleftrightarrow{1:1} \bigsqcup_{\substack{\dim V = 2n+1 \\ \mathrm{disc}(V) = 1 \\ \text{up to isom.}}} \Pi_{\mathrm{temp}}(\SO(V)).
\end{equation}

On the other hand, the LLC for special odd orthogonal groups is known by \cite{Ar13}: it decomposes the right hand side of \eqref{eqn:LLC-Mp-Theta} into Vogan L-packets indexed by L-parameters $\mathcal{L}_F \to \Sp(2n, \CC)$. This is how \eqref{eqn:LLC-Mp} arises.

We now turn to the easier case $F = \CC$. Since $\tilde{G}$ splits uniquely, $\Pi_{\mathrm{temp}, -}(\tilde{G}) = \Pi_{\mathrm{temp}}(G)$. Fix a symplectic basis of $W$ to obtain the standard Borel pair $(B, T)$ of $G$. We use the following fact: let $\mathrm{W}(G, T)$ be the Weyl group and $\eta$ be any unitary character of $T(F)$, then the normalized parabolic induction of $\eta$ to $G(F)$ is irreducible and tempered, and this induces a bijection
\[ \left\{\eta: T(F) \to \CC^{\times}, \;\text{unitary} \right\} \big/ \mathrm{W}(G, T) \xleftrightarrow{1:1} \Pi_{\mathrm{temp}}(G) . \]
All these results are due to Zhelobenko; we refer to \cite[\S\S 2.2--2.4]{MR17} for a summary.

Note that $T$ also embeds into $\SO(2n+1)$ as a maximal torus, with the same Weyl group. The left hand side above is then in bijection with $\Phi_{\mathrm{bdd}}(\SO(2n+1)) = \Phi_{\mathrm{bdd}}(\tilde{G})$ (noting that $\mathcal{L}_{\CC} = \CC^{\times}$). This gives \eqref{eqn:LLC-Mp} for $F = \CC$.

The recipe \eqref{eqn:LLC-Mp-Theta} via $\Theta$-lifting also applies when $F = \CC$. Indeed, there is only one $(V, q)$, and one concludes from the trivial case $n = 0$ together with the induction principle \cite[Corollary 3.21]{AB95} for reductive dual pairs of type I over $\CC$.

Finally, the LLC extends to all genuine irreducible representations of $\tilde{G}$ by passing to Langlands quotients. This step is straightforward, and will not be considered in this article.

\subsection{Endoscopic character relation}
We refer to \cite[\S 5]{GGP1} for an overview of $\epsilon$-factors. Let $\phi \in \Phi_{\mathrm{bdd}}(\tilde{G})$ and set $S_{\phi, 2} := \{s \in S_\phi: s^2 = 1 \}$. View $\phi$ as a $2n$-dimensional representation of $\mathcal{L}_F$, self-dual of symplectic type, with a commuting action of $S_\phi$. Denote the $(-1)$-eigenspace of $s$ as $\phi^{s = -1}$ and set
\[ \epsilon(\phi^{s = -1}) := \epsilon\left(\frac{1}{2}, \phi^{s = -1}, \bpsi \right). \]
This is $\bmu_2$-valued and independent of $\bpsi$, since $\phi^{s = -1}$ is also self-dual of symplectic type.

Given $\phi \in \Phi_{\mathrm{bdd}}(\tilde{G})$ and $s \in S_{\phi, 2}$, define the genuine virtual character
\begin{equation}\label{eqn:T}
	T_{\phi, s} := \epsilon(\phi^{s = -1}) \trans_{\mathbf{G}^!, \tilde{G}} \left( S\Theta^{G^!}_{\phi^!} \right)
\end{equation}
on $\tilde{G}$, where
\begin{itemize}
	\item $\mathbf{G}^! \in \Endo_{\elli}(\tilde{G})$ is determined by the conjugacy class of $s$;
	\item $\phi$ factors through $Z_{\tilde{G}^\vee}(s) = (G^!)^\vee$, and gives rise to $\phi^! \in \Phi_{\mathrm{bdd}}(G^!)$;
	\item $S\Theta^{G^!}_{\phi^!}$ is the stable tempered character on $G^!(F)$ attached to $\phi^!$ by Arthur's theory \cite{Ar13}, which is independent of Whittaker data.
\end{itemize}

A priori, $T_{\phi, s}$ depends on $\phi$ and the conjugacy class of $s$ in $S_\phi$. However we have the following property.

\begin{lemma}[special case of {\cite[Lemma 4.3.3]{Li24a}}]\label{prop:T}
	Given $\phi$, the distribution $T_{\phi, s}$ depends only on the image of $s \in S_{\phi, 2}$ in $\EuScript{S}_\phi$.
\end{lemma}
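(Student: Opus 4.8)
The plan is to track how $T_{\phi, s}$ depends on $s$ through the three ingredients appearing in \eqref{eqn:T}: the elliptic endoscopic datum $\mathbf{G}^!$, equivalently the pair $(n', n'')$ obtained as the multiplicities of $+1$ and $-1$ among the eigenvalues of $s$ acting on the $2n$-dimensional representation $\phi$; the induced parameter $\phi^! = (\phi^{s=1}, \phi^{s=-1})$; and the sign $\epsilon(\phi^{s=-1})$. Fix $s_0, s_1 \in S_{\phi, 2}$ with the same image $\bar{s}$ in $\EuScript{S}_\phi$, so that $s_1 \in s_0 S_\phi^{\circ}$. Using the standard description of $S_\phi$ as a product of symplectic, orthogonal and general linear groups indexed by the isotypic pieces $M_i \otimes \phi_i$ of $\phi$ --- in which $\EuScript{S}_\phi$ is detected exactly by the factors $\pi_0(\Or(m_i, \CC))$ coming from the symplectic-type constituents $\phi_i$ --- I would first reduce to the case where $s_1$ differs from $s_0$ by an \emph{elementary move}: one flips the sign by which $s$ acts on a small non-degenerate sub-multiplicity space of a single $M_i$, so that two copies of $\phi_i$ (when $\phi_i$ is self-dual) or one conjugate-dual pair $\phi_i \oplus \phi_i^{\vee}$ (otherwise) migrate between $\phi^{s=1}$ and $\phi^{s=-1}$. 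Such a move leaves $\bar{s}$ fixed, and a routine linear-algebra argument shows that any two $s_0, s_1$ with a common image in $\EuScript{S}_\phi$ are joined by a finite chain of elementary moves inside $S_{\phi, 2}$; the commutations one uses along the way are automatic, because if $s$, $s'$ and $ss'$ all square to $1$ then $s$ and $s'$ commute.

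For a single elementary move the point is then that the change in $\epsilon(\phi^{s=-1})$ is cancelled by the change in $\trans_{\mathbf{G}^!, \tilde{G}}\bigl(S\Theta^{G^!}_{\phi^!}\bigr)$. When the migrating $\phi_i$ is of symplectic type this is transparent: two copies move together, so $\epsilon(\phi^{s=-1})$ is multiplied by $\epsilon\bigl(\tfrac12, \phi_i \oplus \phi_i, \bpsi\bigr) = \epsilon\bigl(\tfrac12, \phi_i, \bpsi\bigr)^{2} = 1$, a symplectic root number being a sign, and one is left with an identity between two endoscopic transfers to $\tilde{G}$ that differ only by relocating $2\phi_i$ from one $\SO$-factor of $G^!$ to the other; this reduces to the compatibility of Arthur's stable characters on the odd special orthogonal groups with such augmentations, together with the product and cocycle properties of the metaplectic transfer factors recalled in \S\ref{sec:transfer}. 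The substantive cases are the orthogonal-type and general-linear-type constituents, for which $\epsilon(\phi^{s=-1})$ genuinely changes --- by $\epsilon\bigl(\tfrac12, \phi_i \oplus \phi_i, \bpsi\bigr)$ when one adjoins $2\phi_i$ with $\phi_i$ self-dual orthogonal, and by $\epsilon\bigl(\tfrac12, \phi_i \oplus \phi_i^{\vee}, \bpsi\bigr)$ when one adjoins a conjugate-dual pair. Here one must feed in the local root-number identities underlying Arthur's endoscopic character relations for $\SO(2m+1)$ in \cite{Ar13}, which describe exactly how the stable character and its transfer behave when a symplectic parameter is enlarged by a self-dual-orthogonal or by a conjugate-dual summand; the sign $\epsilon(\phi^{s=-1})$ built into \eqref{eqn:T} is precisely the correction that renders the whole expression insensitive to these moves.

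The orthogonal/general-linear cancellation is the step I expect to be the main obstacle, since it is the one place that needs more than formal manipulation of transfer factors. A conceptually lighter route, bypassing the explicit bookkeeping, is available through the $\Theta$-correspondence: by \eqref{eqn:LLC-Mp-Theta} --- valid for non-Archimedean $F$ and, via \cite{AB98}, for $F = \R$ --- the distribution $T_{\phi, s}$ is matched, up to exactly the twist by $\epsilon(\phi^{s=-1})$, with the analogous stable-to-endoscopic transfer distribution on the family of groups $\SO(V)$ with $\dim V = 2n+1$ and $\mathrm{disc}(V) = 1$, and for those groups Arthur's classification \cite{Ar13} already asserts that the relevant packet distributions are parametrized by $\EuScript{S}_\phi$ rather than by $S_\phi$. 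Either way one concludes that $T_{\phi, s}$ descends to a function of $\bar{s} \in \EuScript{S}_\phi$. Since the present note only invokes the conclusion of Lemma \ref{prop:T}, I would in practice record it as \cite[Lemma 4.3.3]{Li24a} and point to the argument above as the shape of the proof.
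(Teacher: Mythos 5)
First, a point of comparison: the paper does not prove this lemma at all --- it is imported wholesale as a special case of \cite[Lemma 4.3.3]{Li24a} --- so there is no internal argument for your proposal to match; it has to stand on its own. Its skeleton is reasonable: since $T_{\phi, s}$ depends only on the $S_\phi$-conjugacy class of $s$, the classification of involutions in each factor of \eqref{eqn:S-phi-decomp} does reduce the claim to ``elementary moves'' in which $2\phi_i$ (with $\phi_i$ self-dual) or $\phi_j \oplus \phi_j^\vee$ migrates between $\phi^{s=1}$ and $\phi^{s=-1}$, and your bookkeeping of how $\epsilon(\phi^{s=-1})$ changes under such a move is correct.

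The genuine gap is the cancellation step, which is exactly the substance of the lemma and is asserted rather than proved. For a single move the two sides live on \emph{different} elliptic endoscopic data, $(n', n'')$ versus $(n' \mp \dim\phi_i,\, n'' \pm \dim\phi_i)$, and one must show that $\trans_{\mathbf{G}^!, \tilde{G}}\bigl(S\Theta^{G^!}_{\phi^!}\bigr)$ changes by precisely $\epsilon(\tfrac{1}{2}, 2\phi_i, \bpsi)$, resp.\ $\epsilon(\tfrac{1}{2}, \phi_j \oplus \phi_j^\vee, \bpsi)$. The references you invoke cannot supply this: in Arthur's theory for $\SO(2m+1)$ the analogous independence statement holds with \emph{no} epsilon factors, so there are no ``root-number identities underlying Arthur's character relations'' that produce these signs --- they are a purely metaplectic phenomenon, arising from the Weil-index/$\gamma_{\bpsi}$ terms in Li's transfer factor. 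What is actually needed is the compatibility of the metaplectic spectral transfer with parabolic induction, together with the descent formula for $\Delta$ along the Levi $\GL(\dim\phi_i) \times \cdots$ through which both parameters $\phi^!$ factor, and an explicit computation of the resulting $\gamma_{\bpsi}$-contributions; this is precisely what \cite[Lemma 4.3.3]{Li24a} does, and it is also why even your ``transparent'' symplectic-type case is not a formal consequence of the product and cocycle properties of $\Delta$ recalled in \S\ref{sec:transfer}. The fallback route through \eqref{eqn:LLC-Mp-Theta} begs the question: the assertion that $T_{\phi, s}$ matches the orthogonal-side distribution ``up to exactly the twist by $\epsilon(\phi^{s=-1})$'' is essentially the endoscopic character relation of Luo/Gan--Savin (Theorem \ref{prop:Luo-endo}), i.e.\ a statement stronger than, and logically downstream of, the lemma as the paper organizes the material. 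As written, then, the proposal identifies the right reduction but leaves the decisive input unestablished; citing \cite[Lemma 4.3.3]{Li24a}, as the paper does, is the honest resolution.
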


Note that $S_\phi$ is a direct product of complex orthogonal groups, general linear groups and symplectic groups --- see \eqref{eqn:S-phi-decomp} below. Hence $S_{\phi, 2}$ surjects onto $\EuScript{S}_\phi$. We obtain a distribution-valued map $x \mapsto T_{\phi, x}$ on $\EuScript{S}_\phi$. Its Fourier coefficients are exactly given by the character distributions $\Tr\left( \pi_{\phi, \chi} \right)$ of $\pi_{\phi, \chi}$, stated as follows.

\begin{theorem}[C.\ Luo \cite{Luo20}]\label{prop:Luo-endo}
	Let $\phi \in \Phi_{\mathrm{bdd}}(\tilde{G})$. The representations $\pi_{\phi, \chi}$ (where $\chi \in \EuScript{S}_\phi^\vee$) are characterized by the following identities: for every $x \in \EuScript{S}_\phi$ we have
	\[ T_{\phi, x} = \sum_{\chi \in \EuScript{S}_\phi^\vee} \chi(x) \Tr\left( \pi_{\phi, \chi} \right). \]
\end{theorem}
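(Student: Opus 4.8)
The plan is to prove the identities by Fourier inversion over the finite abelian group $\EuScript{S}_\phi$. Lemma \ref{prop:T} already guarantees that $x\mapsto T_{\phi,x}$ descends to a well-defined function on $\EuScript{S}_\phi$, so the system in the statement is equivalent to
\[ \Tr(\pi_{\phi,\chi}) \;=\; \frac{1}{|\EuScript{S}_\phi|}\sum_{x\in\EuScript{S}_\phi}\overline{\chi(x)}\,T_{\phi,x}, \qquad \chi\in\EuScript{S}_\phi^\vee. \]
Since $\trans_{\mathbf{G}^!,\tilde G}$ carries stable virtual characters to genuine virtual characters by \cite{Li19}, each $T_{\phi,x}$ is already a genuine virtual character; the task is to show that it equals $\sum_{\chi}\chi(x)\Tr(\pi_{\phi,\chi})$ with the $\pi_{\phi,\chi}$ of \eqref{eqn:LLC-Mp}, equivalently that its constituents all lie in the Gan--Savin/Adams--Barbasch packet $\Pi_\phi$, that the Fourier coefficients are $\pm$ irreducible characters, and that the signs are $+1$ with the correct labeling. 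The $\epsilon$-factor normalization $\epsilon(\phi^{s=-1})$ built into \eqref{eqn:T} is exactly what makes the signs and the labeling come out right, so it must be carried along carefully throughout.

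First I would dispose of the non-discrete parameters by parabolic descent. If $\phi\in\Phi_{\mathrm{bdd}}(\tilde G)$ is not a discrete series parameter, it factors through a proper Levi $\tilde M$ of $\tilde G$, whose underlying group is a product of general linear groups and a metaplectic group of smaller rank, via a discrete parameter $\phi_M$; one checks $\EuScript{S}_\phi\cong\EuScript{S}_{\phi_M}$. On the spectral side the packet $\Pi_\phi$ and its labeling by $\EuScript{S}_\phi^\vee$ are obtained from $\Pi_{\phi_M}$ by normalized parabolic induction (and $R$-group analysis). On the endoscopic side, a constant-term computation for the metaplectic transfer factors of \cite{Li11} shows that $\trans_{\mathbf{G}^!,\tilde G}$ is compatible with (stable) parabolic induction, so that $T_{\phi,x}$ is correspondingly induced from $T_{\phi_M,x}$ for a Levi of $G^!$. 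Hence the identity for $\phi$ reduces to the one for $\phi_M$, and we may assume $\phi$ discrete.

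For discrete series $\phi$ the argument must go through the trace formula. I would globalize: pick a number field $\dot F$ with a place $v_0$ such that $\dot F_{v_0}\simeq F$, and a global discrete L-parameter $\dot\phi$ for $\MMp(2n)$ over $\A_{\dot F}$ whose localization at $v_0$ is $\phi$ and which is unramified, or otherwise as simple as possible, at all other finite places, with Archimedean localizations handled by \cite{AB98} and the Archimedean endoscopic identities. One then compares two computations of the $\dot\phi$-part of the discrete automorphic spectrum of $\MMp(2n)$: on one hand the stabilization of the elliptic/discrete trace formula for the metaplectic group, which rests on the geometric transfer of \cite{Li11} and its spectral counterpart \cite{Li19} and feeds in Arthur's stable characters $S\Theta^{G^!}_{\phi^!}$ for the endoscopic groups $\SO(2n'+1)\times\SO(2n''+1)$ from \cite{Ar13}; on the other hand the Gan--Ichino multiplicity formula \cite{GI18}, which expresses each multiplicity in terms of the global component group $\EuScript{S}_{\dot\phi}$ and an explicit automorphic character assembled from local $\epsilon$-factors. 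Subtracting the two expressions and exploiting linear independence of characters, together with the freedom to vary the test function at the auxiliary places (where Arthur's identities and the unramified calculation pin down all local factors), isolates precisely the sought local identity at $v_0$. The $\Theta$-lift description \eqref{eqn:LLC-Mp-Theta} is used to identify the packet members occurring in the automorphic spectrum with the $\pi_{\phi,\chi}$, and matching the internal label $\chi$ comes down to comparing the Gan--Ichino canonical character with the sign $\epsilon(\phi^{s=-1})$ of \eqref{eqn:T}.

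The hard part is the discrete series case, in two respects. The first is assembling the trace-formula input: one needs the stabilization of (at least the elliptic and discrete parts of) the trace formula for the metaplectic group, spectral side included, in enough generality to run the comparison — this is where \cite{Li11}, \cite{Li19} and \cite{GI18} must be combined, and is the technically heaviest ingredient. The second, subtler point is the exact matching of normalizations: the packet $\Pi_\phi$ and its labeling $\chi\mapsto\pi_{\phi,\chi}$ are defined in \cite{GS1}, \cite{AB98} through $\Theta$-lifting and Arthur's classification of $\SO(2n+1)$, whereas $T_{\phi,x}$ is built from genuine metaplectic transfer factors and the $\epsilon$-factor normalization \eqref{eqn:T}; proving that these two bookkeeping systems agree on the nose — signs and all, not merely up to an unspecified character of $\EuScript{S}_\phi$ — requires tracking the transfer factor of \cite{Li11} through the globalization and matching it against the local root numbers in the Gan--Ichino formula. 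Getting this sign-matching right uniformly over all $F$, including the Archimedean places, is the crux of the proof.
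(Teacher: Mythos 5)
This theorem is not proved in the paper at all: it is imported as a black box from C.~Luo \cite{Luo20}, and the whole point of the note is to \emph{use} this character relation (together with Theorem \ref{prop:Trans-var}) rather than to reprove it. So there is no internal proof to compare against; what you have written is a reconstruction of Luo's external argument. Your sketch does identify the correct ingredients — Li's geometric transfer \cite{Li11} and the preservation of virtual characters \cite{Li19}, Arthur's stable characters for $\SO(2n'+1)\times\SO(2n''+1)$ \cite{Ar13}, globalization, and above all the Gan--Ichino multiplicity formula \cite{GI18}, which is exactly what the introduction says Luo's proof rests on — and you correctly flag that the $\epsilon$-factor normalization in \eqref{eqn:T} is where the labeling by $\EuScript{S}_\phi^\vee$ must be pinned down.

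As a proof, however, the proposal has genuine gaps. The heaviest step you invoke, ``the stabilization of the elliptic/discrete trace formula for the metaplectic group, spectral side included,'' is not something \cite{Li11} and \cite{Li19} supply: those references give the transfer of orbital integrals and the fact that $\trans_{\mathbf{G}^!,\tilde G}$ sends stable virtual characters to genuine virtual characters, not a stabilized spectral expansion, and Luo's argument is structured around the Gan--Ichino formula precisely so as not to need a full stabilization. Likewise, the reduction of non-discrete parameters by ``a constant-term computation for the metaplectic transfer factors'' (compatibility of transfer with parabolic induction and with the $R$-group analysis on the $\Mp$ side), the existence of a global parameter $\dot\phi$ with prescribed localization $\phi$ at $v_0$ and controllable behavior elsewhere, and the final matching of the Gan--Ichino canonical character against $\epsilon(\phi^{s=-1})$ are each substantial arguments that you assert rather than carry out — the last one being, as you yourself note, the crux. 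So the outline is a fair description of the shape of Luo's proof, but it does not constitute one, and in the context of this paper the honest answer is simply to cite \cite{Luo20}.
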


This gives another characterization of the LLC of $\tilde{G}$ that is based on endoscopic transfer.

\subsection{A theorem of Gan--Savin}
Let $\phi \in \Phi_{\mathrm{bdd}}(\tilde{G})$. As a representation of $\mathcal{L}_F$, it decomposes uniquely into
\[ \phi = \bigoplus_{i \in I} m_i \phi_i \]
where $\phi_i$ are simple and distinct, $m_i \in \Z_{\geq 1}$. The indexing set $I$ admits a decomposition
\[ I = I^+ \sqcup I^- \sqcup J \sqcup J', \]
with the following properties:
\begin{itemize}
	\item there is a bijection between $J$ and $J'$, written as $j \leftrightarrow j'$.
	\item $\phi_i$ is self-dual of symplectic (resp.\ orthogonal) type when $i \in I^+$ (resp.\ $i \in I^-$);
	\item $\phi_j$ is not self-dual when $j \in J$, and $\phi_{j'}$ is isomorphic to the contragredient of $\phi_j$;
	\item $m_i$ is even for all $i \in I^-$, and $m_j = m_{j'}$ for all $j \in J$.
\end{itemize}
From this we obtain
\begin{equation}\label{eqn:S-phi-decomp}\begin{aligned}
	S_\phi & \simeq \prod_{i \in I^+} \Or(m_i, \CC) \times \prod_{i \in I^-} \Sp(m_i, \CC) \times \prod_{j \in J} \GL(m_j, \CC), \\
	\EuScript{S}_\phi & \simeq \bmu_2^{I^+} .
\end{aligned}\end{equation}

Now fix $c \in F^{\times}$. The coset $c F^{\times 2}$ corresponds to a homomorphism $\zeta = \zeta_c: \Weil{F} \to \bmu_2 \simeq Z_{\tilde{G}^\vee}$, which can be used to twist L-parameters of $\tilde{G}$. Write the twisting action as $\phi \mapsto \phi\zeta$, and identify $\EuScript{S}_\phi^\vee = \EuScript{S}_{\phi\zeta}^\vee$ with $\bmu_2^{I^+}$ in what follows.

\begin{definition}\label{def:delta-c}
	Given $c F^{\times 2}$, let $\delta_c \in \EuScript{S}_\phi^\vee$ be given by
	\[ \delta_{c, i} = \zeta_c(-1)^{\frac{1}{2} \dim \phi_i} \dfrac{\epsilon(\frac{1}{2}, \phi_i, \bpsi)}{\epsilon(\frac{1}{2}, \phi_i \zeta_c, \bpsi)}. \]
	for all $i \in I^+$. This is canonically defined.
\end{definition}

Hereafter, we consider genuine irreducible representations of $\tilde{G}^{(2)}$. As recalled earlier, $\tilde{G}^{(2)}$ does not depend on $\bpsi$; on the other hand, the LLC \S\ref{sec:LLC} does so. To emphasize this dependence, we denote the genuine representation of $\tilde{G}^{(2)}$ attached to $(\phi, \chi)$ as $\pi_{\phi, \chi}^{\bpsi}$.

All additive characters of $F$ are obtained from $\bpsi$ by rescaling. The following result gives an exact description of the dependence of local Langlands correspondence on $\bpsi$.

\begin{theorem}[Gan--Savin {\cite[Theorem 12.1]{GS1}}]\label{prop:GS}
	Let $c \in F^{\times}$ and put $\zeta = \zeta_c$. For all $\phi \in \Phi_{\mathrm{bdd}}(\tilde{G})$ and $\chi \in \EuScript{S}_\phi^\vee$, we have
	\[ \pi^{\bpsi_c}_{\phi\zeta, \chi \delta_c} \simeq \pi^{\bpsi}_{\phi, \chi}. \]
\end{theorem}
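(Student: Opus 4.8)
The plan is to deduce Theorem \ref{prop:GS} from the transfer variation formula (Theorem \ref{prop:Trans-var-preview}) together with Luo's endoscopic character relations (Theorem \ref{prop:Luo-endo}), exactly as anticipated in the introduction. The only genuinely representation-theoretic inputs will be these two theorems; everything else is a bookkeeping matching of the two sides.

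\textbf{Step 1: Transport the character identity through the spectral transfer.} Fix $\phi \in \Phi_{\mathrm{bdd}}(\tilde{G})$ and $s \in S_{\phi, 2}$, determining $\mathbf{G}^!$ with $G^! = \SO(2n'+1)\times\SO(2n''+1)$, and recall $T_{\phi, s} = \epsilon(\phi^{s=-1}) \trans_{\mathbf{G}^!, \tilde{G}}(S\Theta^{G^!}_{\phi^!})$. I would compute $T^{\bpsi_c}_{\phi\zeta, s}$ and compare it to $T^{\bpsi}_{\phi, s}$. First, the second identity of Theorem \ref{prop:Trans-var-preview} says $\trans^{\bpsi_c}_{\mathbf{G}^!, \tilde{G}^{(2)}} \circ \Upsilon_\zeta = (c,-1)_{F,2}^{n''}\, \trans^{\bpsi}_{\mathbf{G}^!, \tilde{G}^{(2)}}$, so applying $\trans^{\bpsi_c}$ to the stable character $S\Theta^{G^!}_{(\phi\zeta)^!}$ amounts to applying $(c,-1)_{F,2}^{n''}\,\trans^{\bpsi}$ to $\Upsilon_\zeta^{-1}(S\Theta^{G^!}_{(\phi\zeta)^!})$. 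By Corollary \ref{prop:Upsilon-effect} (translation by $\zeta$ on both $\SO$ factors), $\Upsilon_\zeta$ matches the parameter $\phi^!$ on $G^!$ with $(\phi\zeta)^!$; hence $\Upsilon_\zeta^{-1}(S\Theta^{G^!}_{(\phi\zeta)^!}) = S\Theta^{G^!}_{\phi^!}$, using that the LLC for $\SO(2n+1)$ from \cite{Ar13} is insensitive to additive characters (the $\SO$ groups are adjoint, no Whittaker normalization issue). Therefore $T^{\bpsi_c}_{\phi\zeta, s} = (c,-1)_{F,2}^{n''} \cdot \dfrac{\epsilon^{\bpsi_c}((\phi\zeta)^{s=-1})}{\epsilon^{\bpsi}(\phi^{s=-1})}\, T^{\bpsi}_{\phi, s}$, where both $\epsilon$-factors are actually $\bpsi$-independent (symplectic self-dual type) and the superscripts may be dropped.

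\textbf{Step 2: Identify the scalar with $\delta_c$.} Combining the decomposition \eqref{eqn:S-phi-decomp} of $\phi$ into isotypic pieces with the identification $\EuScript{S}_\phi^\vee = \EuScript{S}_{\phi\zeta}^\vee = \bmu_2^{I^+}$, I would show the scalar $(c,-1)_{F,2}^{n''}\, \epsilon(\phi^{s=-1})\,\epsilon((\phi\zeta)^{s=-1})^{-1}$, viewed as a function of the image $x \in \EuScript{S}_\phi$ of $s$, equals $\delta_c(x)$. Here $n''$ is half the multiplicity of $-1$ as an eigenvalue of $s$, i.e. $n'' = \frac12 \dim \phi^{s=-1}$, and $(c,-1)_{F,2}$ equals $\zeta_c(-1)$ by local class field theory; so $(c,-1)_{F,2}^{n''} = \zeta_c(-1)^{\frac12\dim\phi^{s=-1}}$. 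Matching this against Definition \ref{def:delta-c} reduces, piece by piece over $i \in I^+$, to the additivity of $\dim$ and of $\epsilon$-factors in direct sums, together with the elementary fact $\epsilon(\frac12, \phi_i\zeta_c, \bpsi)/\epsilon(\frac12,\phi_i,\bpsi)$ squares to $1$. This gives $T^{\bpsi_c}_{\phi\zeta, x} = \delta_c(x)\, T^{\bpsi}_{\phi, x}$ for all $x \in \EuScript{S}_\phi$.

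\textbf{Step 3: Fourier inversion.} Now apply Theorem \ref{prop:Luo-endo} to both sides. For $\bpsi$: $T^{\bpsi}_{\phi, x} = \sum_{\chi} \chi(x)\,\Tr(\pi^{\bpsi}_{\phi,\chi})$. For $\bpsi_c$ and the parameter $\phi\zeta$: $T^{\bpsi_c}_{\phi\zeta, x} = \sum_{\chi'} \chi'(x)\,\Tr(\pi^{\bpsi_c}_{\phi\zeta,\chi'})$. Substituting the relation from Step 2, $\sum_{\chi'}\chi'(x)\Tr(\pi^{\bpsi_c}_{\phi\zeta,\chi'}) = \delta_c(x)\sum_\chi \chi(x)\Tr(\pi^{\bpsi}_{\phi,\chi}) = \sum_\chi (\chi\delta_c)(x)\Tr(\pi^{\bpsi}_{\phi,\chi})$; reindexing $\chi' = \chi\delta_c$ and using that $\{x \mapsto T_{\phi,x}\}$ determines its Fourier coefficients uniquely (the characters of the finite abelian group $\EuScript{S}_\phi$ are linearly independent, and the $\Tr(\pi_{\phi,\chi})$ are linearly independent as $\chi$ ranges over $\EuScript{S}_\phi^\vee$ since the $\pi_{\phi,\chi}$ are pairwise non-isomorphic irreducibles), we get $\Tr(\pi^{\bpsi_c}_{\phi\zeta,\chi\delta_c}) = \Tr(\pi^{\bpsi}_{\phi,\chi})$ for every $\chi$, hence $\pi^{\bpsi_c}_{\phi\zeta,\chi\delta_c} \simeq \pi^{\bpsi}_{\phi,\chi}$.

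I expect the main obstacle to be Step 2: keeping the identification $\EuScript{S}_\phi^\vee \cong \EuScript{S}_{\phi\zeta}^\vee \cong \bmu_2^{I^+}$ consistent with the one implicit in Theorem \ref{prop:Luo-endo} and in Definition \ref{def:delta-c}, and carefully tracking how $n''$ — a datum of the \emph{endoscopic} group attached to $s$ — translates into the eigenvalue multiplicities of $s$ acting on $\phi$ versus on $\phi\zeta$, since twisting by $\zeta$ does not change the underlying vector space but does relabel which summands are which. The $\epsilon$-factor side is delicate only insofar as one must invoke that $\epsilon(\frac12, \phi^{s=-1}, \bpsi)$ is $\bpsi$-independent (already noted in the excerpt) so that the scalars computed for $\bpsi$ and $\bpsi_c$ genuinely agree; this is where the hypothesis that $\phi^{s=-1}$ is symplectic self-dual is used. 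The rest is formal. As remarked in the introduction, this deduction is carried out in \cite[Proposition 9.2.3]{Li24a}, and neither \cite{Luo20} nor \cite{GI18} uses \cite[Theorem 12.1]{GS1}, so no circularity arises.
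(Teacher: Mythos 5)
Your proposal is correct and is essentially the paper's own argument: the paper deduces the theorem from the second identity of Theorem \ref{prop:Trans-var} together with Luo's relations (Theorem \ref{prop:Luo-endo}), packaging your Steps 1--2 as a citation of \cite[Proposition 9.2.3]{Li24a} and your Step 3 as the Fourier inversion ${}^{\star}\pi_{\phi,\chi} = |\EuScript{S}_\phi|^{-1}\sum_{x \in \EuScript{S}_\phi} \chi(x) T_{\phi,x}$. The only point to watch when fleshing out Step 2 is that $\phi^{s=-1}$ may also contain summands indexed by $I^-$ and $J \sqcup J'$, whose contributions to the scalar cancel via $\epsilon(\tfrac12,\sigma\oplus\sigma^\vee,\bpsi)=(\det\sigma)(-1)$ and $\epsilon(\tfrac12,\sigma,\bpsi)^2=(\det\sigma)(-1)$ for self-dual $\sigma$, not merely the $I^+$ bookkeeping you describe.
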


This assertion appeared first in \cite[Conjecture 11.3]{GGP1}. Strictly speaking, it is only settled for non-Archimedean $F$ in \cite[Theorem 12.1]{GS1}. For $F = \R$, one may try to extract this from \cite{AB98}. A direct, endoscopic proof of Theorem \ref{prop:GS} will be given in \S\ref{sec:proof-GS}.

\section{Spinor norms}\label{sec:SN}
\subsection{Basic properties}\label{sec:SN-basic}
To begin with, let $F$ be any field with $\mathrm{char}(F) \neq 2$. For a quadratic $F$-vector space $(V, q)$, the spinor norm is a canonical homomorphism
\[ \mathrm{SN} = \mathrm{SN}_V: \Or(V) \to F^{\times} / F^{\times 2} \]
with the following properties; we refer to \cite[Chapter 9, \S 3]{Sc85} or \cite[\S 5.1]{FKS19} for details.
\begin{itemize}
	\item If $(V, q) = (V_1, q_1) \oplus (V_2, q_2)$, then $\mathrm{SN}_V|_{\Or(V_1) \times \Or(V_2)}$ is the product of $\mathrm{SN}_{V_1}$ and $\mathrm{SN}_{V_2}$.
	\item If $V$ is the direct sum of $n$ copies of the hyperbolic plane, so that $\GL(n) \hookrightarrow \SO(V)$, then
	\[ \mathrm{SN}|_{\GL(n)} = \det \bmod\; F^{\times 2}. \]
	\item The restriction of $\mathrm{SN}_V$ to $\SO(V)$ is invariant under dilation $q \mapsto tq$ where $t \in F^{\times}$. Indeed, its effect is to multiply $\mathrm{SN}(\tau)$ by $t F^{\times 2}$ for each reflection $\tau \in \Or(V)$.
	\item We have the following cohomological interpretation: $\mathrm{SN}_V: \SO(V) \to F^{\times}/F^{\times 2}$ equals the connecting homomorphism induced by the short exact sequence $1 \to \mu_2 \to \mathrm{Spin}(V) \to \SO(V) \to 1$ of group schemes over $F$.
\end{itemize}

Hereafter, we focus on the special case of $H := \SO(2n+1)$ over a local field $F$ with $\mathrm{char}(F) = 0$, although certain results can surely be generalized.

\begin{lemma}\label{prop:SN-formula}
	Let $\gamma \in H_{\mathrm{sreg}}(F)$, whose conjugacy class is parametrized by $(K, K^{\natural}, x, c)$ as in \S\ref{sec:conjugacy-class}. There exists $\omega \in K^{\times}$ such that $x = \omega/\tau(\omega)$, and for any such $\omega$ we have
	\[ \mathrm{SN}(\gamma) = N_{K|F}(\omega) \bmod F^{\times 2}. \]
	
	As a consequence, $\mathrm{SN}(\gamma)$ depends only on the stable conjugacy class of $\gamma$.
\end{lemma}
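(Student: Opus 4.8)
The plan is to reduce the computation of $\mathrm{SN}(\gamma)$ to the cohomological description of the spinor norm recalled in \S\ref{sec:SN-basic}, namely that $\mathrm{SN}_V \colon \SO(V) \to F^{\times}/F^{\times 2}$ is the connecting homomorphism attached to $1 \to \mu_2 \to \mathrm{Spin}(V) \to \SO(V) \to 1$. First I would unwind the parametrization of \S\ref{sec:conjugacy-class}: the element $\gamma$, being strongly regular semisimple, generates a maximal torus of $H$ isomorphic to $K^1 = \{ x \in K^{\times} : x\tau(x) = 1\}$ via the étale $F$-algebra $K$ with involution $\tau$ and fixed subalgebra $K^{\natural}$. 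Here $L$ should denote the étale quadratic extension datum sitting over $K^{\natural}$ (i.e.\ $K$ itself viewed as a $K^{\natural}$-algebra, or the relevant split/nonsplit companion), and the norm $N_{L|K}$ is the one appearing in the statement. The key point is that the torus $K^1$ is exactly the group of $K^{\natural}$-points of the norm-one torus, and its preimage in $\mathrm{Spin}(V)$ is governed by the norm map.

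The main steps, in order, would be: (i) identify the restriction of the Spin-covering of $H$ to the maximal torus $T \simeq K^1$ as the pushout of a norm-type extension — concretely, an element $\gamma \in K^1(F)$ lifts to $\mathrm{Spin}$ precisely after choosing $\omega \in K^{\times}$ with $x = \gamma = \omega/\tau(\omega)$ (Hilbert 90 guarantees existence since $N_{K|K^{\natural}}$-triviality of the cocycle $\sigma \mapsto x$ is automatic for $x \in K^1$); (ii) compute the connecting map explicitly, which sends $\gamma$ to the class of $N_{L|K}(\omega)$ in $F^{\times}/F^{\times 2}$, obtained by applying the appropriate trace/norm from the torus down to $\Gm$ over $F$; (iii) check well-definedness: two choices $\omega, \omega_1$ differ by an element of $K^1(F)$ itself (since $\omega/\omega_1 \in \ker(t \mapsto t/\tau(t))$), and the norm of such an element lands in $F^{\times 2}$, so $\mathrm{SN}(\gamma) \bmod F^{\times 2}$ is independent of the choice. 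For the last assertion of the lemma, I would observe that the data $(K, K^{\natural}, x)$ — hence $\omega$ up to the ambiguity just discussed — depend only on the stable conjugacy class of $\gamma$, since the extra datum $c$ (which distinguishes rational classes within a stable class) plays no role in the formula; therefore $\mathrm{SN}(\gamma)$ is a stable invariant.

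I expect the main obstacle to be step (ii): pinning down the precise relation between the abstract connecting homomorphism for $\mathrm{Spin}(V) \to \SO(V)$ restricted to $T$ and the concrete norm formula $N_{L|K}(\omega)$. This requires a careful bookkeeping of how the maximal torus of $\SO(2n+1)$ decomposes according to the factorization of $K$ into field components, matching each field factor to either a "type $\SO(2)$" piece (contributing a norm from a quadratic extension) or a split hyperbolic piece (where $\mathrm{SN}$ restricts to $\det$, by the second bullet of \S\ref{sec:SN-basic}, and the formula collapses to $N_{L|K}(\omega)$ with $L = K \times K$); one then invokes the first bullet (additivity of $\mathrm{SN}$ under orthogonal direct sums) to assemble the general case from these building blocks. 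The cross-check against the $\GL(n) \hookrightarrow \SO$ case, where $x = \omega/\tau(\omega)$ forces $\mathrm{SN}(\gamma) = \det(\gamma)$-type expressions, gives a useful sanity test. Everything else — Hilbert 90, the $F^{\times 2}$-ambiguity, stability — is routine once this dictionary is in place, and I would cite \cite[\S 5.1]{FKS19} for the spinor-norm computations on tori rather than redo them.
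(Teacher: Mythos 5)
Your proposal follows essentially the same route as the paper: existence of $\omega$ via Hilbert 90 applied factorwise, decomposition of $K$ into its field factors matching the orthogonal decomposition of the quadratic space (plus a trivially contributing anisotropic line), and delegation of the actual spinor-norm computation on each piece to \cite[\S 5.1]{FKS19} (the paper invokes Fact 5.1.8 there directly), with stable invariance following because the datum $c$ never enters the formula; note also that $N_{L|K}$ in the statement is a typo for $N_{K|F}$, i.e.\ exactly the norm-to-$F$ you describe. One small correction: the ambiguity in $\omega$ is not by $K^1(F)$ but by $K^{\natural,\times} = \ker\bigl(t \mapsto t/\tau(t)\bigr)$, whose image under $N_{K|F}$ consists of squares, so your well-definedness conclusion still stands.
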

\begin{proof}
	Decompose $K = \prod_{i \in I} K_i$ and $K^{\natural} = \prod_{i \in I} K^{\natural}_i$ accordingly, so that $K^{\natural}_i$ is a field and $K_i$ is either a quadratic field extension of $K_i^{\natural}$, or $K_i \simeq K_i^{\natural} \times K_i^{\natural}$, for each $i \in I$. Write $x = (x_i)_i$.
	
	The existence of $\omega = (\omega_i)_i$ follows by Hilbert's Theorem 90 applied to each $K_i | K^{\natural}_i$ and $x_i$; it suffices to deal with those $i$ such that $K_i$ is a field.
	
	By inspecting the parametrization of conjugacy classes (see \cite[\S 3]{Li11} or \cite[\S 3.1]{Li20}), any quadratic $F$-vector space defining $H$ is seen to be the direct sum over $i \in I$ of the spaces considered in \cite[Fact 5.1.8]{FKS19} made from the data $(K_i, K^{\natural}_i, c_i)$, plus an anisotropic line, on which $\gamma$ acts by multiplication by $x_i$ and $\identity$, respectively. We can now apply \cite[Fact 5.1.8]{FKS19} to each summand to infer that
	\[ \mathrm{SN}(\gamma) = 1 \cdot \prod_{i \in I} N_{K_i|F}(\omega_i) = N_{K|F}(\omega) \bmod F^{\times 2}. \]
	
	The formula above does not involve $c$, hence  $\mathrm{SN}(\gamma)$ depends only on the stable conjugacy class of $\gamma$.
\end{proof}

\begin{remark}
	The stable invariance is a general fact for characters of $H(F)$ arising from $\Hm^1(\Weil{F}, Z_{H^\vee})$, which is indeed the case for $(t, \mathrm{SN})_{F, 2}$, for all $t \in F^{\times}$. From this we can also deduce the second assertion above.
\end{remark}

Consider a quadratic character $\zeta: \Weil{F} \to \bmu_2$, corresponding to a coset $cF^{\times 2}$ in $F^{\times}$. We may view $\zeta$ as valued in $Z_{H^\vee}$. Such homomorphisms give rise to quadratic characters $s_c: H(F) \to \bmu_2$ by \cite[\S 10.2]{Bo79}. They are related to spinor norms as follows.

\begin{lemma}\label{prop:coho-SN}
	Given a coset $cF^{\times 2}$ in $F^{\times}$, we have $s_c = (c, \mathrm{SN}(\cdot))_{F, 2}$.
\end{lemma}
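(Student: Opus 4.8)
The plan is to show that $s_c$ and $(c,\mathrm{SN}(\cdot))_{F,2}$ are both characters of $H(F)$ that factor through the spinor norm $\mathrm{SN}\colon H(F)\to F^{\times}/F^{\times 2}$, and then to check that they induce the same character of $F^{\times}/F^{\times 2}$ by restricting to a split maximal torus. For the factorization: the universal cover $H_{\mathrm{sc}}=\mathrm{Spin}(2n+1)$ is split and simply connected, so $H_{\mathrm{sc}}(F)$ equals its own commutator subgroup; hence every character of $H(F)$ is trivial on the image of $H_{\mathrm{sc}}(F)$, and that image is precisely the kernel of the connecting map of $1\to\mu_2\to H_{\mathrm{sc}}\to H\to 1$, which is $\mathrm{SN}$ by the last bullet of \S\ref{sec:SN-basic}. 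Since $\mathrm{SN}$ restricted to a split maximal torus $T\subset H$ is already surjective onto $F^{\times}/F^{\times 2}$, it suffices to prove $s_c(t)=(c,\mathrm{SN}(t))_{F,2}$ for all $t\in T(F)$.

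Fix such a $T$ and let $T_{\mathrm{sc}}\subset H_{\mathrm{sc}}$ be its preimage. I would first record that $\mathrm{SN}|_{T(F)}$ is the connecting map $T(F)\to\Hm^1(F,\mu_2)=F^{\times}/F^{\times 2}$ of $1\to\mu_2\to T_{\mathrm{sc}}\to T\to 1$; for a split torus this is the homomorphism $\bar q$ obtained by tensoring the quotient $q\colon X_*(T)\twoheadrightarrow X_*(T)/X_*(T_{\mathrm{sc}})\simeq\Z/2$ with $F^{\times}$ and using $\Z/2\otimes F^{\times}=F^{\times}/F^{\times 2}$. On the other side, by the construction in \cite[\S 10.2]{Bo79}, $s_c|_{T(F)}$ is the character attached by the Langlands correspondence for the torus $T$ to the image of $[\zeta_c]$ in $\Hm^1(\Weil{F},T^\vee)$ under $Z_{H^\vee}\hookrightarrow T^\vee$.

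To unwind this last character I would use the Cartier-duality identifications $X^*(T^\vee)=X_*(T)$ and $X^*(Z_{H^\vee})=X_*(T)/X_*(T_{\mathrm{sc}})\simeq\Z/2$: the restriction to $Z_{H^\vee}$ of a cocharacter $\lambda\in X_*(T)$ is then $q(\lambda)$, so the composite $\Weil{F}\xrightarrow{[\zeta_c]}Z_{H^\vee}\xrightarrow{\lambda}\CC^{\times}$ is trivial or equal to $\zeta_c$ according as $q(\lambda)$ is $0$ or $1$. Feeding this through local class field theory — normalized so that $\zeta_c$ corresponds to $x\mapsto(c,x)_{F,2}$ — together with $T(F)=X_*(T)\otimes F^{\times}$ yields $s_c(t)=(c,\bar q(t))_{F,2}=(c,\mathrm{SN}(t))_{F,2}$ for all $t\in T(F)$. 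Combined with the factorization of the first paragraph, this forces $s_c=(c,\mathrm{SN}(\cdot))_{F,2}$ on all of $H(F)$.

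The only real difficulty is keeping the normalizations consistent: the identification $Z_{H^\vee}\simeq\mu_2$, the Cartier duality $X^*(Z_{H^\vee})\simeq X_*(T)/X_*(T_{\mathrm{sc}})$, the Langlands correspondence for $T$, and the local class field theory isomorphism underlying the Hilbert symbol must all be chosen compatibly so that no spurious inversion appears; with the conventions of \cite[\S 5.1]{FKS19} this is routine. An alternative that bypasses the torus would evaluate Borel's character directly on the centralizer $K^1$ of a strongly regular element and identify it with Lemma \ref{prop:SN-formula} via Hilbert's Theorem 90, but the reduction to a split torus is the most economical route.
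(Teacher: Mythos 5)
Your argument is correct and rests on exactly the same point as the paper's one-line proof, namely the cohomological interpretation of $\mathrm{SN}$ as the connecting homomorphism of $1 \to \mu_2 \to \mathrm{Spin}(V) \to \SO(V) \to 1$; you merely spell out the standard compatibility between Borel's construction and this connecting map (perfectness of $H_{\mathrm{sc}}(F)$, reduction to a split maximal torus, Langlands duality for tori and class field theory), which the paper treats as immediate. No gap — just a fully unwound version of the same proof.
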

\begin{proof}
	Immediate from the cohomological interpretation of $\mathrm{SN}$.
\end{proof}

Since $\mathrm{SN}$ is stably invariant on $H_{\mathrm{sreg}}(F)$, so is $s_c$. Hence the involution $f \mapsto s_c f$ of $C^\infty_c(H(F))$ descends to $S\orbI(H)$, and is continuous when $F$ is Archimedean. We tensor it with $\mes(H)$, and denote the resulting involution and its dual as
\begin{equation}\label{eqn:Upsilon}
	\begin{aligned}
		\Upsilon^{\zeta, H}: S\orbI(H) \otimes \mes(H) & \rightiso S\orbI(H) \otimes \mes(H), \\
		\Upsilon_\zeta^H: SD(H) \otimes \mes(H)^\vee & \rightiso SD(H) \otimes \mes(H)^\vee.
	\end{aligned} 
\end{equation}

\subsection{Effect on L-parameters}
Set $H = \SO(2n+1)$ as before. Fix a quadratic character $\zeta: \Weil{F} \to \bmu_2$, corresponding to a coset $cF^{\times 2}$ in $F^{\times}$.

For each $\phi \in \Phi_{\mathrm{bdd}}(H)$, the stable tempered character $S\Theta^H_\phi$ belongs to $SD(H) \otimes \mes(H)^\vee$. Viewing $\zeta$ as a homomorphism valued in $Z_{H^\vee}$, we obtain an involution $\phi \mapsto \zeta\phi$ of $\Phi_{\mathrm{bdd}}(H)$, and similarly for L-parameters of the Levi subgroups of $H$.

We are going to determine the effect of $\phi \mapsto \zeta\phi$ on the LLC for $H$ in terms of spinor norms. Given Lemma \ref{prop:coho-SN}, this ought to be a standard property of local Langlands correspondence. Due to the lack of adequate references, we give a direct proof below.

\begin{lemma}\label{prop:zeta-packet-prep}
	Let $\phi \in \Phi_{2, \mathrm{bdd}}(H)$, and $\phi^{\GL}$ be a bounded L-parameter for $\GL(2n)$ that is multiplicity-free and each simple summand is self-dual of symplectic type. Consider
	\begin{itemize}
		\item $\sigma \in \Pi^H_\phi$,
		\item $\sigma^{\GL}$: the tempered irreducible representation of $\GL(2n, F)$ parametrized by $\phi^{\GL}$.
	\end{itemize}
	Embed $\GL(2n) \times H$ as a Levi subgroup of $L := \SO(6n+1)$, then the following are equivalent:
	\begin{enumerate}[(i)]
		\item the normalized parabolic induction of $\sigma^{\GL} \boxtimes \sigma$ to $L(F)$ is irreducible;
		\item $\phi$ maps to $\phi^{\GL}$ under $\Phi_{\mathrm{bdd}}(H) \hookrightarrow \Phi_{\mathrm{bdd}}(\GL(2n))$.
	\end{enumerate}
\end{lemma}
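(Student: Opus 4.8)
The plan is to prove (i)$\iff$(ii) by showing that each side is equivalent to the assertion that $\phi$ and $\phi^{\GL}$ are isomorphic as $2n$-dimensional representations of $\mathcal{L}_F$: for (ii) this is a dimension count, for (i) it is a computation of the Knapp--Stein $R$-group of the relevant induced representation.

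First I would reformulate (ii). A bounded L-parameter for $H=\SO(2n+1)$ is a homomorphism $\mathcal{L}_F\to\Sp(2n,\CC)$, and applying the decomposition \eqref{eqn:S-phi-decomp} to $H$ (whose dual group is again $\Sp(2n,\CC)$) shows that such a parameter lies in $\Phi_{2,\mathrm{bdd}}(H)$ exactly when, viewed as a $2n$-dimensional representation, it is multiplicity-free with every simple summand self-dual of symplectic type --- an orthogonal-type summand would contribute an infinite symplectic factor to $S_\phi$, and a repeated symplectic-type summand an infinite orthogonal factor. Hence $\Phi_{\mathrm{bdd}}(H)\hookrightarrow\Phi_{\mathrm{bdd}}(\GL(2n))$ is injective on $\Phi_{2,\mathrm{bdd}}(H)$, and its image there is exactly the set of parameters that $\phi^{\GL}$ is allowed to be. Since $\dim\phi=2n=\dim\phi^{\GL}$ and both are multiplicity-free, the containment ``every simple summand of $\phi^{\GL}$ occurs in $\phi$'' already forces $\phi^{\GL}=\phi$; so (ii) is equivalent to that containment.

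Next I would reformulate (i). Write $\phi^{\GL}=\rho_1\oplus\cdots\oplus\rho_r$ with $\rho_1,\dots,\rho_r$ the distinct simple summands, put $d_i:=\dim\rho_i$, and let $\delta_i$ be the discrete series representation of $\GL(d_i,F)$ attached to $\rho_i$. Being tempered, $\sigma^{\GL}$ is the (automatically irreducible) normalized parabolic induction of $\delta_1\boxtimes\cdots\boxtimes\delta_r$ from the Levi $\prod_i\GL(d_i)$ of $\GL(2n)$; by transitivity of induction, the normalized parabolic induction of $\sigma^{\GL}\boxtimes\sigma$ to $L(F)$ equals that of $\delta_1\boxtimes\cdots\boxtimes\delta_r\boxtimes\sigma$ from the standard Levi $\prod_i\GL(d_i)\times H$ of $L=\SO(6n+1)$. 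Its irreducibility is governed by the $R$-group of this inducing datum. Because the $\delta_i$ are pairwise non-isomorphic and self-dual, the only elements of the relevant Weyl group stabilizing the datum are products of the reflections $s_i$ sending $\delta_i$ to its contragredient, so the standard $R$-group computation for classical groups (Goldberg and M\oe glin--Tadi\'c over non-Archimedean $F$, together with the analogous results over $\R$) identifies the $R$-group with $\bigoplus_{i=1}^{r}R_i$, where $R_i\simeq\bmu_2$ if $\delta_i\rtimes\sigma$ (the normalized induction of $\delta_i\boxtimes\sigma$ from $\GL(d_i)\times H$ to $\SO(2n+2d_i+1)$) is reducible, and $R_i$ is trivial otherwise. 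Since each $\rho_i$ is of good parity for odd special orthogonal groups (it is self-dual of symplectic type), $\delta_i\rtimes\sigma$ is reducible if and only if $\rho_i$ does \emph{not} occur among the simple summands of the parameter $\phi$ of $\sigma$ --- a condition depending only on $\phi$, hence on the packet $\Pi^H_\phi$ and not on the chosen $\sigma$. Therefore the induced representation of $L(F)$ is irreducible if and only if every $\rho_i$ occurs in $\phi$, which by the previous paragraph is exactly (ii).

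The main obstacle is the $R$-group input just invoked: that the $R$-group of $\delta_1\boxtimes\cdots\boxtimes\delta_r\boxtimes\sigma$ is the direct sum of the rank-one contributions $R_i$ (using pairwise distinctness and self-duality of the $\rho_i$ to annihilate all cross terms, and allowing the $\rho_i$ to carry $\SL(2,\CC)$-parts), and the rank-one criterion that $\delta_i\rtimes\sigma$ is reducible precisely when the symplectic-type parameter $\rho_i$ is not a summand of $\phi$. For non-Archimedean $F$ these follow from Goldberg's $R$-group computations together with the reducibility results of M\oe glin and M\oe glin--Tadi\'c (or may be read off from Arthur's classification, via the component group of $2\phi^{\GL}\oplus\phi$ for $\SO(6n+1)$); for Archimedean $F$ one appeals to the corresponding real $R$-group theory. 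Granting these, everything else is bookkeeping; the one point requiring care is that the cited statements apply uniformly to $p$-adic and to real $F$.
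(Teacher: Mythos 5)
Your argument is correct and is, at its core, the same $R$-group strategy as the paper: both proofs reduce the irreducibility in (i) to the vanishing of the Knapp--Stein $R$-group of $\delta_1\boxtimes\cdots\boxtimes\delta_r\boxtimes\sigma$ relative to the Levi $\prod_i\GL(n_i)\times H$ of $L$, and then identify that $R$-group combinatorially in terms of which simple summands of $\phi^{\GL}$ occur in $\phi$ (your rank-one criterion --- $\delta_i\rtimes\sigma$ reducible precisely when $\rho_i$ is not a summand of $\phi$ --- is exactly what the intrinsic computation gives). The difference is in the black box used for that identification. The paper invokes Arthur's parameter-side $R$-group $R_{\phi_M}\simeq\EuScript{S}_{\phi_L}/\EuScript{S}_{\phi_M}$ for $\phi_L=2\phi^{\GL}\oplus\phi$ together with the isomorphism $R_{\sigma_M}\simeq R_{\phi_M}$ of \cite[(6.5.3)]{Ar13}; this single citation is valid for every local field of characteristic zero, which matters here since uniformity in $F$ is the point of the article. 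Your default route through Goldberg's $R$-group computation and the M\oe glin--Tadi\'c reducibility criteria is sound but non-Archimedean, and the appeal to ``the corresponding real $R$-group theory'' is the one genuinely under-supported step: in the Archimedean case the clean statement you need is most readily available precisely as the Arthur/Shelstad-type identification you mention parenthetically (reading everything off the component groups of $2\phi^{\GL}\oplus\phi$), i.e.\ the paper's argument. So your proof closes once you promote that parenthesis to the main line for Archimedean (or indeed all) $F$; as written, the Goldberg/M\oe glin--Tadi\'c route proves the lemma only for non-Archimedean $F$. The preliminary reductions --- that $\Phi_{2,\mathrm{bdd}}(H)$ consists of multiplicity-free symplectic-type parameters, the dimension count making (ii) equivalent to ``every $\rho_i$ occurs in $\phi$'', and the transitivity of induction --- agree with what the paper leaves implicit and are fine.
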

\begin{proof}
	Decompose $\phi^{\GL}$ into simple summands
	\[ \phi^{\GL} = \phi_1^{\GL} \oplus \cdots \oplus \phi_r^{\GL}, \quad n_i := \dim \phi_i. \]
	Set $M := \prod_{i=1}^r \GL(n_i) \times H$, viewed as a Levi subgroup of $L$, then $\phi_M := (\phi_1, \ldots, \phi_r, \phi) \in \Phi_{2, \mathrm{bdd}}(M)$; its image in $\Phi_{\mathrm{bdd}}(L)$ is $\phi_L = 2\phi^{\GL} \oplus \phi$. There is then a natural homomorphism $S_{\phi_M} \to S_{\phi_L}$.
	
	Consider Arthur's $R$-group $R_{\phi_M}$ in \cite[\S 2.4]{Ar13}, defined relative to $M \subset L$. It is canonically isomorphic to $\EuScript{S}_{\phi_L} / \EuScript{S}_{\phi_M}$ by \textit{loc.\ cit.} From this one readily sees
	\[ R_{\phi_M} = \{1\} \iff \phi \mapsto \phi^{\GL}. \]
	
	On the other hand, $\sigma^{\GL} = \sigma_1 \times \cdots \times \sigma_r$ where $\sigma_i$ is the irreducible representation of $\GL(n_i, F)$, square-integrable modulo center, parametrized by $\phi_i$. Given $\sigma \in \Pi^H_\phi$, put $\sigma_M = \sigma_1 \boxtimes \cdots \boxtimes \sigma_r \boxtimes \sigma$. By \cite[(6.5.3)]{Ar13}, the Knapp--Stein $R$-group $R_{\sigma_M}$ defined relative to $M \subset L$ satisfies
	\[ R_{\sigma_M} \simeq R_{\phi_M}. \]
	
	All in all, the parabolic induction of $\sigma^{\GL} \boxtimes \sigma$ from $\GL(2n) \times H$ to $L$ is irreducible if and only if $R_{\phi_M} \simeq R_{\sigma_M} = \{1\}$, if and only if $\phi$ maps to $\phi^{\GL}$.
\end{proof}

\begin{proposition}\label{prop:zeta-packet}
	Let $\phi \in \Phi_{\mathrm{bdd}}(H)$, we have
	\[ \Pi^H_{\zeta\phi} = \left\{ (c, \mathrm{SN})_{F, 2} \otimes \sigma : \sigma \in \Pi^H_\phi \right\}. \]
\end{proposition}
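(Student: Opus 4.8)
The plan is to translate the $\zeta$-twist on parameters into the $(c,\mathrm{SN})_{F,2}$-twist on representations by means of Lemma~\ref{prop:coho-SN}, first reducing to discrete series parameters and then invoking Lemma~\ref{prop:zeta-packet-prep}. For the reduction, write $\phi \in \Phi_{\mathrm{bdd}}(H)$ as the image of a discrete parameter $\phi_M$ on a Levi subgroup $M = \prod_i \GL(n_i) \times \SO(2m+1)$ of $H$; then $\Pi^H_\phi$ is the set of irreducible constituents of the unitary, hence completely reducible, parabolic inductions $I_P^H(\rho_1 \boxtimes \cdots \boxtimes \sigma_0)$, where $\rho_i$ is the representation of $\GL(n_i,F)$ parametrized by the simple $\phi_i$ and $\sigma_0 \in \Pi^{\SO(2m+1)}_{\phi_0}$ with $\phi_0$ discrete. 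Parabolic induction commutes with twisting by a character of $H(F)$, and the additivity of spinor norms under orthogonal direct sums together with $\mathrm{SN}|_{\GL(n)} = \det$ gives $s_c|_M = \bigl(\prod_i (c,\det)_{F,2}\bigr) \boxtimes s_c^{\SO(2m+1)}$. Since $(c,\det)_{F,2} \otimes \rho_i$ is parametrized by $\zeta\phi_i$ by the LLC for $\GL$, the set $(c,\mathrm{SN})_{F,2} \otimes \Pi^H_\phi$ is exactly the set of constituents of the inductions built from $\zeta\phi_M$, which — granting the discrete case applied to the $\SO(2m+1)$-factor — is precisely $\Pi^H_{\zeta\phi}$. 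So it suffices to treat $\phi \in \Phi_{2,\mathrm{bdd}}(H)$.

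For the discrete case, fix $\phi \in \Phi_{2,\mathrm{bdd}}(H)$ and $\sigma \in \Pi^H_\phi$, let $\phi^{\GL}$ be the image of $\phi$ under $\Phi_{\mathrm{bdd}}(H) \hookrightarrow \Phi_{\mathrm{bdd}}(\GL(2n))$ — multiplicity-free, all simple summands self-dual of symplectic type — and let $\sigma^{\GL}$ be the tempered irreducible representation of $\GL(2n,F)$ that it parametrizes. Since $\zeta$ is quadratic, the invariant alternating form survives the twist, so $\zeta\phi^{\GL}$ is again of the type permitted in Lemma~\ref{prop:zeta-packet-prep}, and $\zeta\phi \in \Phi_{2,\mathrm{bdd}}(H)$ still maps to $\zeta\phi^{\GL}$. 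Set $\tau := (c,\mathrm{SN})_{F,2} \otimes \sigma$; it is square-integrable, so $\tau \in \Pi^H_{\phi'}$ for a unique $\phi' \in \Phi_{2,\mathrm{bdd}}(H)$. Embedding $\GL(2n) \times H$ in $L := \SO(6n+1)$ and using $s_c^L|_{\GL(2n)\times H} = (c,\det)_{F,2} \boxtimes s_c$ once more, the parabolic induction to $L(F)$ of $\bigl((c,\det)_{F,2}\otimes\sigma^{\GL}\bigr) \boxtimes \tau$ equals $s_c^L \otimes I_P^L(\sigma^{\GL}\boxtimes\sigma)$, which is irreducible because $I_P^L(\sigma^{\GL}\boxtimes\sigma)$ is (Lemma~\ref{prop:zeta-packet-prep} applied to $\phi$, as $\phi \mapsto \phi^{\GL}$ tautologically). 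Feeding this irreducibility into Lemma~\ref{prop:zeta-packet-prep} with the data $(\phi',\ \zeta\phi^{\GL},\ \tau,\ (c,\det)_{F,2}\otimes\sigma^{\GL})$ forces $\phi'$ to map to $\zeta\phi^{\GL}$; as $\Phi_{2,\mathrm{bdd}}(H) \hookrightarrow \Phi_{\mathrm{bdd}}(\GL(2n))$ is injective on discrete parameters, $\phi' = \zeta\phi$. Hence $(c,\mathrm{SN})_{F,2} \otimes \Pi^H_\phi \subseteq \Pi^H_{\zeta\phi}$, and replacing $\phi$ by $\zeta\phi$ and applying the involution $(c,\mathrm{SN})_{F,2} \otimes (-)$ yields the reverse inclusion, hence equality.

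The bulk of the work, and the only point needing genuine care, is the discrete case: one must check that the quadratic twist keeps $\phi^{\GL}$ multiplicity-free of symplectic type, that the spinor-norm character of $\SO(6n+1)$ restricts on the relevant Levi as $(c,\det)_{F,2} \boxtimes s_c$, and that discrete parameters of $\SO(2n+1)$ are determined by their image in $\Phi_{\mathrm{bdd}}(\GL(2n))$. The compatibility of character twists with parabolic induction, the $\GL$-case, and the Levi bookkeeping in the reduction step are routine.
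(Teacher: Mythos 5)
Your proposal is correct and follows essentially the same route as the paper: reduce to discrete parameters via the Levi decomposition and compatibility of character twists with parabolic induction, then in the discrete case apply Lemma \ref{prop:zeta-packet-prep} twice (once for $\phi$, once for $\phi'$ and $\zeta\phi^{\GL}$) using the restriction of the spinor-norm character of $\SO(6n+1)$ to $\GL(2n)\times H$ and the fact that twisting by a character of $L(F)$ preserves irreducibility of the induction. The only cosmetic differences are the order of the two steps and that you make explicit the points the paper leaves implicit (the twisted $\phi^{\GL}$ still satisfies the lemma's hypotheses, and discrete parameters of $H$ are determined by their image in $\Phi_{\mathrm{bdd}}(\GL(2n))$).
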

\begin{proof}
	First off, suppose $\phi \in \Phi_{2, \mathrm{bdd}}(H)$. It suffices to show
	\[ \sigma \in \Pi^H_\phi \implies (c, \mathrm{SN})_{F, 2} \otimes \sigma \in \Pi^H_{\zeta\phi}. \]
	
	In Lemma \ref{prop:zeta-packet-prep}, take $\phi^{\GL}$ to be the image of $\phi$ and any $\sigma \in \Pi^H_\phi$, so that $\sigma^{\GL} \boxtimes \sigma$ induces irreducibly to $L(F)$. By \S\ref{sec:SN-basic}, the spinor norm on $L(F)$ restricts to the product of $\det \bmod\; F^{\times 2}$ and $\mathrm{SN}$ on $\GL(2n, F) \times H(F)$. Tensoring by the character $(c, \mathrm{SN})_{F, 2}$ of $L(F)$ does not affect the irreducibility of parabolic induction, thus
	\[ \left( (c, \det)_{F, 2} \otimes \sigma^{\GL}\right) \boxtimes \left( (c, \mathrm{SN})_{F, 2} \otimes \sigma \right) \;\text{induces irreducibly to}\; L(F). \]
	
	Note that $\zeta\phi^{\GL}$ is the image of $\zeta\phi$, and the corresponding representation of $\GL(2n, F)$ is $(c, \det)_{F, 2} \otimes \sigma^{\GL}$. On the other hand, $\sigma' := (c, \mathrm{SN})_{F, 2} \otimes \sigma$ is still square-integrable modulo center. Denote the L-parameter of $\sigma'$ by $\phi' \in \Phi_{2, \mathrm{bdd}}(H)$. Lemma \ref{prop:zeta-packet-prep} implies $\phi' \mapsto \zeta\phi^{\GL}$, i.e.\ $\sigma' \in \Pi^H_{\zeta\phi}$. This concludes the case $\phi \in \Phi_{2, \mathrm{bdd}}(H)$.
	
	Consider a general $\phi \in \Phi_{\mathrm{bdd}}(H)$. It is the image of $\phi_0 \in \Phi_{2, \mathrm{bdd}}(M_H)$ for some Levi subgroup $M_H \subset H$, and $\Pi^H_\phi$ consists of all irreducible constituents of parabolic inductions from various $\sigma_0 \in \Pi^{M_H}_{\phi_0}$, by \cite[\S 2.4]{Ar13}.
	
	If $\phi$ is replaced by $\zeta\phi$, then $\phi_0$ is replaced by $\zeta\phi_0$ and all $\sigma_0$ are tensored by the character $(c, \det)_{F, 2}$ (resp.\ $(c, \mathrm{SN})_{F, 2}$) on the $\GL$ factors (resp.\ $\SO$ factor) by the previous step. We have seen that this character of $M_H(F)$ is the restriction of $(c, \mathrm{SN})_{F, 2}$ on $H(F)$. Hence $\Pi^H_{\zeta\phi}$ is obtained from $\Pi^H_\phi$ by tensoring $(c, \mathrm{SN})_{F, 2}$.
\end{proof}

\begin{corollary}\label{prop:Upsilon-effect}
	Consider a quadratic character $\zeta: \Weil{F} \to \bmu_2$. The $\Upsilon_\zeta^H$ in \eqref{eqn:Upsilon} satisfies $\Upsilon_\zeta^H \left( S\Theta^H_\phi \right) = S\Theta^H_{\zeta\phi}$.
\end{corollary}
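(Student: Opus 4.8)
The plan is to expand both sides spectrally and observe that the statement is the dual, at the level of stable characters, of the character-twisting described in Proposition \ref{prop:zeta-packet}. First I would recall that, since $H = \SO(2n+1)$ is adjoint, Arthur's stable tempered character attached to $\phi \in \Phi_{\mathrm{bdd}}(H)$ is simply the (unsigned, Whittaker-independent) sum over the L-packet,
\[ S\Theta^H_\phi = \sum_{\sigma \in \Pi^H_\phi} \Tr(\sigma), \]
viewed as an element of $SD(H) \otimes \mes(H)^\vee$ once a Haar measure is inserted. This is the only input from \cite{Ar13} that is needed here.

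Next I would unwind the definition of $\Upsilon_\zeta^H$ as the transpose of $\Upsilon^{\zeta, H}$. Recall that $\Upsilon^{\zeta, H}$ is induced by $f \mapsto s_c f$ on $C^\infty_c(H(F))$, with $s_c = (c, \mathrm{SN}(\cdot))_{F, 2}$ by Lemma \ref{prop:coho-SN}, and that this multiplication is independent of any choice of Haar measure, so $\Upsilon^{\zeta, H}$ is (multiplication by $s_c$) $\otimes\, \identity_{\mes(H)}$. Consequently, for any tempered irreducible $\sigma$ and any $f \in C^\infty_c(H(F))$,
\[ \big\langle \Upsilon_\zeta^H\big(\Tr(\sigma)\big), f \big\rangle = \big\langle \Tr(\sigma), s_c f \big\rangle = \int_{H(F)} s_c(g)\, f(g)\, \Tr\sigma(g)\, \dd g = \Tr\big( s_c \otimes \sigma \big)(f), \]
so that $\Upsilon_\zeta^H(\Tr(\sigma)) = \Tr\big( (c, \mathrm{SN})_{F, 2} \otimes \sigma \big)$. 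By linearity and the displayed formula for $S\Theta^H_\phi$,
\[ \Upsilon_\zeta^H\big( S\Theta^H_\phi \big) = \sum_{\sigma \in \Pi^H_\phi} \Tr\big( (c, \mathrm{SN})_{F, 2} \otimes \sigma \big). \]

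Finally I would invoke Proposition \ref{prop:zeta-packet}, which identifies the right-hand side as $\sum_{\sigma' \in \Pi^H_{\zeta\phi}} \Tr(\sigma') = S\Theta^H_{\zeta\phi}$, completing the proof; the bijection $\sigma \mapsto (c,\mathrm{SN})_{F,2}\otimes\sigma$ from $\Pi^H_\phi$ onto $\Pi^H_{\zeta\phi}$ ensures no representation is counted with the wrong multiplicity. There is no serious obstacle: the content has already been discharged in Proposition \ref{prop:zeta-packet}, and what remains is bookkeeping — getting the duality pairing and the $\mes(H)$/$\mes(H)^\vee$ factors right, and (for Archimedean $F$) noting that multiplication by the smooth character $s_c$ is continuous so that the transpose $\Upsilon_\zeta^H$ is well defined on $SD(H)\otimes\mes(H)^\vee$. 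The one point to state carefully is the normalization of $S\Theta^H_\phi$ as the plain sum of packet characters, which is legitimate precisely because $H$ is adjoint.
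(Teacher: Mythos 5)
Your proof is correct and follows exactly the route of the paper: expand $S\Theta^H_\phi$ as the plain sum of packet characters (legitimate since $H$ is adjoint), note that $\Upsilon_\zeta^H$ dualizes multiplication by $s_c = (c, \mathrm{SN}(\cdot))_{F,2}$ from Lemma \ref{prop:coho-SN}, and conclude by Proposition \ref{prop:zeta-packet}. You merely make explicit the duality bookkeeping that the paper's one-line proof leaves implicit, which is fine.
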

\begin{proof}
	In view of Lemma \ref{prop:coho-SN} and Proposition \ref{prop:zeta-packet}, it suffices to recall that
	\[ S\Theta^H_\phi = \sum_{\sigma \in \Pi^H_\phi} \Tr(\sigma) \]
	where $\Tr(\sigma)$ is the character distribution of $\sigma$, and same for $S\Theta^H_{\zeta\phi}$.
\end{proof}

Hence $\Upsilon_\zeta^H$ agrees with the eponymous involution defined in \cite[\S 9.1]{Li24a}, when restricted to the subspace generated by stable tempered characters.
	
\section{Variation of additive characters}\label{sec:var}
\subsection{Action of similitude groups}
Let $\nu: \GSp(W) \to \Gm$ be the similitude character. Giving $g \in \GSp(W)$ with $c := \nu(g)$ is the same as giving an isomorphism of symplectic $F$-vector spaces $g: (W, c\lrangle{\cdot|\cdot}) \rightiso (W, \lrangle{\cdot|\cdot})$. Hence $\Ad(g)$ induces an automorphism
\[ G = \Sp(W, c\lrangle{\cdot|\cdot}) \rightiso \Sp(W, \lrangle{\cdot|\cdot}) = G . \]

\begin{itemize}
	\item On the level of twofold coverings, $\Ad(g)$ lifts uniquely to an isomorphism $\tilde{G}^{(2)} \rightiso \tilde{G}^{(2)}$, and induces $\identity$ on $\bmu_2$. This follows from the classification \cite[Theorem 10.4]{Mo68} of coverings for $G(F)$.
	
	\item On the level of eightfold coverings, let us denote $\tilde{G}$ as $\tilde{G}^{\bpsi, \lrangle{\cdot|\cdot}}$ to emphasize its dependence on these data. By a transport of structure in the construction of Weil representations and metaplectic groups (namely Schrödinger models), we see $\Ad(g)$ lifts canonically to
	\[ \tilde{G}^{\bpsi, c\lrangle{\cdot|\cdot}} \rightiso \tilde{G}^{\bpsi, \lrangle{\cdot|\cdot}}, \; \text{inducing $\identity$ on $\bmu_8$.} \]
	A closer inspection shows $\tilde{G}^{\bpsi, \lrangle{\cdot|\cdot}}$ depends only on $\bpsi \circ \lrangle{\cdot|\cdot}$, hence
	\[ \tilde{G}^{\bpsi, c\lrangle{\cdot|\cdot}} = \tilde{G}^{\bpsi_c, \lrangle{\cdot|\cdot}} =: \tilde{G}^{\bpsi_c}. \]
\end{itemize}

As in \S\ref{sec:endoscopic-data}, we view the covering $\tilde{G}^{(2)} \twoheadrightarrow G(F)$ as an object independent of $\bpsi$; it embeds uniquely into $\tilde{G}^{\bpsi}$ as a sub-covering. Summing up, $g \in \GSp(W)$ gives rise to a canonical commutative diagram
\begin{equation}\label{eqn:Adg-lift}
	\begin{tikzcd}
		\tilde{G}^{\bpsi_c} \arrow[r, "\sim"] \arrow[dd, bend right=45, "{\rev}"'] & \tilde{G}^{\bpsi} \arrow[dd, bend left=45, "{\rev}"] \\
		\tilde{G}^{(2)} \arrow[hookrightarrow, u] \arrow[r, "\sim"] \arrow[d, "{\rev^{(2)}}"] & \tilde{G}^{(2)} \arrow[hookrightarrow, u] \arrow[d, "{\rev^{(2)}}"'] \\
		G(F) \arrow[r, "\sim"] & G(F)
	\end{tikzcd}
\end{equation}
where all the horizontal isomorphisms will be denoted as $\Ad(g)$, and $\Ad(g_1 g_2) = \Ad(g_1) \Ad(g_2)$ continues to hold on the level of coverings.

Denote the transfer factor $\Delta$ (see \S\ref{sec:transfer}) by $\Delta^{\bpsi}$ or $\Delta^{\bpsi, \lrangle{\cdot|\cdot}}$ to emphasize its dependence on these data. However, $\mathbf{G}^! \in \Endo_{\elli}(\tilde{G})$ and the correspondence of stable conjugacy classes (see \S\ref{sec:conjugacy-class}) do not depend on them.

\begin{lemma}\label{prop:Ad-transport}
	Let $g \in \GSp(W)$ and $c := \nu(g)$. If $\gamma \in G^!_{\mathrm{sreg}}(F)$ corresponds to $\delta \in G_{\mathrm{reg}}(F)$, and $\tilde{\delta} \in \tilde{G}^{\bpsi_c} = \tilde{G}^{\bpsi, c\lrangle{\cdot|\cdot}}$ maps to $\delta$, then
	\[ \Delta^{\bpsi_c}(\gamma, \tilde{\delta}) = \Delta^{\bpsi, c\lrangle{\cdot|\cdot}}(\gamma, \tilde{\delta}) = \Delta^{\bpsi, \lrangle{\cdot|\cdot}}(\gamma, \Ad(g)(\tilde{\delta})). \]
\end{lemma}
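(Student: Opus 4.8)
The plan is to trace through the construction of the metaplectic transfer factor $\Delta$ of \cite[\S 5.3]{Li11} and verify that each of its ingredients is carried, under the transport-of-structure isomorphism $\Ad(g)$, from the data $(\bpsi, c\lrangle{\cdot|\cdot})$ to the data $(\bpsi, \lrangle{\cdot|\cdot})$. The first equality $\Delta^{\bpsi_c} = \Delta^{\bpsi, c\lrangle{\cdot|\cdot}}$ is not really a claim but the unwinding of the identification $\tilde{G}^{\bpsi, c\lrangle{\cdot|\cdot}} = \tilde{G}^{\bpsi_c}$ recorded just above (both the covering and the transfer factor depend only on $\bpsi \circ \lrangle{\cdot|\cdot}$, and $\bpsi \circ (c\lrangle{\cdot|\cdot}) = \bpsi_c \circ \lrangle{\cdot|\cdot}$), so the substance is the second equality.

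First I would set up the bookkeeping: the isomorphism $g\colon (W, c\lrangle{\cdot|\cdot}) \rightiso (W,\lrangle{\cdot|\cdot})$ induces $\Ad(g)\colon \Sp(W,c\lrangle{\cdot|\cdot}) \rightiso \Sp(W,\lrangle{\cdot|\cdot})$, hence a bijection on $G_{\mathrm{reg}}(F)$ and on regular semisimple stable classes; crucially this bijection is the \emph{identity} on the parametrizing data $(K, K^\natural, x, c)$ of \S\ref{sec:conjugacy-class}, since those data are intrinsic to the symplectic module $W$ with its $\delta$-action and are insensitive to rescaling the form. Likewise $\mathbf{G}^!$ and the correspondence $\gamma \leftrightarrow \delta$ do not move. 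So on the endoscopic side nothing changes, and the only thing to check is that the two factors of $\Delta$ — the Langlands--Shelstad-type factor attached to $\gamma \leftrightarrow \delta$ (independent of $\bpsi$ and of the scaling of $\lrangle{\cdot|\cdot}$, built from the tori $K^1$, the splitting invariants, etc., all transported identically by $\Ad(g)$) and the genuine Weil-representation factor encoding the position of $\tilde\delta$ in $\rev^{-1}(\delta)$ — match up. For the genuine factor one uses that $\Ad(g)$ lifts \emph{canonically} to the commutative diagram \eqref{eqn:Adg-lift}, inducing the identity on $\bmu_8$; thus if $\tilde\delta \mapsto \delta$ then $\Ad(g)(\tilde\delta) \mapsto \Ad(g)(\delta)$, and the genuine factor — which is defined via Schrödinger models for $\omega_{\bpsi}$ and depends on $W$, $\lrangle{\cdot|\cdot}$, $\bpsi$ only through $\bpsi\circ\lrangle{\cdot|\cdot}$ — is transported by $\Ad(g)$ precisely because the Weil representation itself is: $\omega_{\bpsi}^{(W, c\lrangle{\cdot|\cdot})} \circ \Ad(g)^{-1} \simeq \omega_{\bpsi}^{(W,\lrangle{\cdot|\cdot})}$ compatibly with the chosen models.

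The cleanest way to phrase the conclusion is as a transport-of-structure statement: the whole package $\bigl(G, \tilde G, \Endo_{\elli}, \{\gamma\leftrightarrow\delta\}, \Delta\bigr)$ for $(\bpsi, c\lrangle{\cdot|\cdot})$ is carried isomorphically onto that for $(\bpsi, \lrangle{\cdot|\cdot})$ by the family of maps $\Ad(g)$, because each constituent is defined functorially from $(W, c\lrangle{\cdot|\cdot}, \bpsi)$ and $g$ is an isomorphism of such triples onto $(W, \lrangle{\cdot|\cdot}, \bpsi)$. Since $\Delta$ is characterized up to the listed normalization properties and those are preserved, the identity $\Delta^{\bpsi, c\lrangle{\cdot|\cdot}}(\gamma,\tilde\delta) = \Delta^{\bpsi,\lrangle{\cdot|\cdot}}(\gamma,\Ad(g)(\tilde\delta))$ follows. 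The main obstacle — and the only place requiring genuine care rather than a formal invocation of functoriality — is verifying that the genuine part of $\Delta$, i.e.\ the piece built from the metaplectic cocycle / Weil index $\gamma_{\bpsi}$, really is transported correctly: one must check that the canonical lift of $\Ad(g)$ in \eqref{eqn:Adg-lift} is compatible with the specific Schrödinger realizations used in \cite{Li11} to define $\Delta$, so that no stray eighth root of unity is introduced. This is exactly a matter of matching the transport of structure for Schrödinger models against the construction of the metaplectic transfer factor, and is where I would spend essentially all the effort.
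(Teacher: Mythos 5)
Your proposal matches the paper's proof: the first equality is deduced from the fact that the construction of $\Delta$ in \cite[\S 5.3]{Li11} (in particular the part involving the character of the Weil representation) depends only on $\bpsi\circ\lrangle{\cdot|\cdot}$, and the second is exactly the transport of structure by $g\colon (W, c\lrangle{\cdot|\cdot})\rightiso (W,\lrangle{\cdot|\cdot})$ via the canonical lift \eqref{eqn:Adg-lift}. The point you flag as needing care (compatibility of the lifted $\Ad(g)$ with the Schr\"odinger-model ingredients of $\Delta$) is precisely what the paper disposes of by its inspection of \textit{loc.\ cit.}, so the two arguments coincide.
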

\begin{proof}
	By inspecting the definition of $\Delta$ in \cite[\S 5.3]{Li11}, especially the part concerning the character of Weil representations $\omega_{\bpsi}^{\pm}$, we see that $\Delta$ depends only on $\bpsi \circ \lrangle{\cdot|\cdot}$. The first equality follows.
	
	Similarly, in view of \textit{loc.\ cit.}, the second equality is simply a transport of structure by $g: (W, c\lrangle{\cdot|\cdot}) \rightiso (W, \lrangle{\cdot|\cdot})$.
\end{proof}

\subsection{Calibration}\label{sec:calibration}
Consider $g \in \GSp(W)$ and $c := \nu(g)$. When $\tilde{\delta}$ is given, the conjugacy class of $\Ad(g)(\tilde{\delta})$ depends only on $c$.

\begin{lemma}\label{prop:st-conj}
	Let $\tilde{\delta} \in \tilde{G}^{(2)}$ with $\delta := \rev^{(2)}(\tilde{\delta}) \in G_{\mathrm{reg}}(F)$. Parametrize the conjugacy class of $\delta$ by the datum $(K, K^{\natural}, x, c)$ as in \S\ref{sec:conjugacy-class}, then
	\begin{enumerate}[(i)]
		\item there exists $\omega \in K^{\times}$ such that $\omega/\tau(\omega) = x$, and
		\[ \mathrm{CAd}(g)(\tilde{\delta}) := \left(c, N_{K|F}(\omega)\right)_{F, 2} \Ad(g)(\tilde{\delta}) \; \in \tilde{G}^{(2)} \]
		is well-defined;
		\item $\mathrm{CAd}(g)(\tilde{\delta})$ is stably conjugate to $\tilde{\delta}$ in the sense of Adams.
	\end{enumerate}
	We refer to \cite[\S 9.1]{Li20} for a review of Adams' notion of stable conjugacy in $\tilde{G}^{(2)}$.
\end{lemma}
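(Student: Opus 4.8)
The plan is to prove the two assertions separately, with (i) being essentially bookkeeping and (ii) carrying all the content. For (i), the existence of $\omega \in K^{\times}$ with $\omega/\tau(\omega) = x$ is the same argument already used for Lemma~\ref{prop:SN-formula}: decompose $K = \prod_{i} K_i$ and $K^{\natural} = \prod_i K^{\natural}_i$ compatibly with $\tau$, and for each index apply Hilbert's Theorem~90 to the quadratic extension $K_i \mid K^{\natural}_i$ (using $N_{K_i \mid K^{\natural}_i}(x_i) = x_i\tau(x_i) = 1$), the split case $K_i \simeq K^{\natural}_i \times K^{\natural}_i$ being trivial. For well-definedness it suffices that $N_{K\mid F}(\omega) \bmod F^{\times 2}$ is independent of the choice: if $\omega'$ is another such element then $u := \omega/\omega'$ is $\tau$-fixed, so $u \in (K^{\natural})^{\times}$ and $N_{K\mid F}(u) = N_{K^{\natural}\mid F}(N_{K\mid K^{\natural}}(u)) = N_{K^{\natural}\mid F}(u^2) \in F^{\times 2}$, whence $(c, N_{K\mid F}(\omega))_{F,2}$ is unchanged (here $c = \nu(g)$, and the fourth datum of the parametrization never enters). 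Since this Hilbert symbol lies in $\bmu_2 = \Ker(\rev^{(2)})$ and $\Ad(g)(\tilde{\delta})$ is the canonical lift of Diagram~\eqref{eqn:Adg-lift}, the product $\mathrm{CAd}(g)(\tilde{\delta})$ is a well-defined element of $\tilde{G}^{(2)}$ lying over $\Ad(g)(\delta)$.

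For (ii) I would first pin down the relative position of $\delta$ and $\Ad(g)(\delta)$. As $\Sp(W)$ has no nontrivial outer automorphisms, over $\bar{F}$ we may write $g = s\,h$ with $s \in \bar{F}^{\times}$ a scalar satisfying $s^2 = c$ and $h \in \Sp(W)(\bar{F})$; then $\Ad(g)(\delta) = \Ad(h)(\delta)$ lies in $G(F)$ and is stably conjugate to $\delta$, and the associated cocycle is $\sigma \mapsto h^{-1}\sigma(h) = s\,\sigma(s)^{-1}$ for $\sigma \in \Gal(\bar{F}/F)$. This takes values in $\{\pm 1\} = Z_G \subset G_\delta$ and represents, in $\Hm^1(F, G_\delta)$, the image of the Kummer class $cF^{\times 2} \in \Hm^1(F, \bmu_2)$ under $Z_G = \bmu_2 \hookrightarrow G_\delta \simeq K^1$. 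Next I would invoke the description of Adams' stable conjugacy in $\tilde{G}^{(2)}$ from \cite[\S 9.1]{Li20}: for lifts of stably conjugate strongly regular elements, the lift of $\Ad(g)(\delta)$ obtained by transport of structure under $g$ — which here is exactly $\Ad(g)(\tilde{\delta})$, since $g \in \GSp(W)$ induces an honest automorphism of $\tilde{G}^{(2)}$ via Diagram~\eqref{eqn:Adg-lift} — differs from the lift that is stably conjugate to $\tilde{\delta}$ in Adams' sense by the value of a canonical $\bmu_2$-valued character $\eta_\delta$ of $\Hm^1(F, G_\delta)$ on the invariant class just computed. Finally I would evaluate $\eta_\delta$ on the image of $cF^{\times 2}$: because $G_\delta \simeq K^1$ is the very torus that also occurs as a centralizer in $\SO(2n+1)$, this character agrees on such classes with the spinor-norm twist $(\,\cdot\,, \mathrm{SN})_{F,2}$ of Lemma~\ref{prop:coho-SN}, so its value is $(c, \mathrm{SN})_{F,2}$ evaluated at the corresponding orthogonal element, which by Lemma~\ref{prop:SN-formula} equals $(c, N_{K\mid F}(\omega))_{F,2}$. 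This is precisely the correcting sign in the definition of $\mathrm{CAd}(g)(\tilde{\delta})$, so $\mathrm{CAd}(g)(\tilde{\delta})$ is stably conjugate to $\tilde{\delta}$ in Adams' sense.

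I expect the middle step of (ii) to be the main obstacle: extracting from \cite[\S 9.1]{Li20} the exact form of Adams' notion — in particular isolating the canonical character $\eta_\delta$ measuring the failure of naive transport of $\tilde{\delta}$ under stable conjugacy — and then matching $\eta_\delta$ on the central classes coming from $\Hm^1(F, \bmu_2)$ (which is all that is needed here) with the spinor-norm character, i.e.\ checking that the Weil-index/quadratic-form data attached to $\delta$ in \textit{loc.\ cit.}\ reproduces $N_{K\mid F}(\omega)$ after Lemma~\ref{prop:SN-formula}. As the excerpt indicates, most of these cocycle computations are already carried out in \cite{Li20}, so the work is to assemble them and specialize to the invariant $cF^{\times 2}$; the remaining ingredients — Hilbert~90, the factorization $g = sh$ over $\bar{F}$, and the identification of the invariant with $cF^{\times 2}$ — are routine. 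The cocycle property of metaplectic transfer factors \cite[Proposition~5.13]{Li11} is then what one would use downstream to propagate this lemma to Proposition~\ref{prop:Delta-var}.
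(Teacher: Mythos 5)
Part (i) of your proposal is correct and is essentially the paper's argument: Hilbert's Theorem 90 componentwise, plus the observation that $\omega$ is unique up to $(K^{\natural})^{\times}$, whose norms to $F$ are squares, so the sign $\left(c, N_{K|F}(\omega)\right)_{F,2}$ is well defined. Your computation of $\mathrm{inv}(\delta, \Ad(g)(\delta))$ via the factorization $g = sh$ over $\bar{F}$ is also correct and agrees with what the paper extracts from \cite[Proposition 3.3.4]{Li20}; note, however, that in the paper this invariant is only needed later, in Lemma \ref{prop:Delta-cocycle}, where the cocycle property of $\Delta$ is invoked --- it is not the crux of the present lemma.

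The genuine gap is the middle step of (ii). You postulate (a) that the discrepancy between the naive transport $\Ad(g)(\tilde{\delta})$ and the lift over $\Ad(g)(\delta)$ that is Adams-stably conjugate to $\tilde{\delta}$ is given by evaluating a canonical $\bmu_2$-valued character $\eta_\delta$ of $\Hm^1(F, G_\delta)$ at the invariant, and (b) that $\eta_\delta$ agrees on classes coming from $\Hm^1(F,\bmu_2)$ with the spinor-norm character, ``because $G_\delta \simeq K^1$ also occurs as a centralizer in $\SO(2n+1)$.'' Neither point is proved, and (b) is essentially the content of the lemma: Adams' notion is defined through the character of the Weil representation on $\tilde{G}^{(2)}$, whereas Lemmas \ref{prop:coho-SN} and \ref{prop:SN-formula} are statements about the linear group $\SO(2n+1)$ and give no information about which of the two lifts $\pm\Ad(g)(\tilde{\delta})$ is stably conjugate to $\tilde{\delta}$; that the correcting sign coincides with $(c, N_{K|F}(\omega))_{F,2}$ is exactly what must be computed, not something that can be transferred from the orthogonal side by analogy. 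The paper closes this step by reducing to the calibration machinery of \cite{Li20}: the calibrated conjugates $\mathbf{C}\mathrm{Ad}(g_{\mathrm{ad}})(\tilde{\delta})$, $g_{\mathrm{ad}} \in G^S_{\mathrm{ad}}(F)$, carry an explicit sign (\cite[Definition--Proposition 4.3.7]{Li20}); choosing $g_{\mathrm{ad}} = (g_i)_i$ with $\det(g_{i,1}) = c$ makes $\Ad(g_{\mathrm{ad}})(\tilde{\delta})$ conjugate to $\Ad(g)(\tilde{\delta})$; a sign comparison via \cite[Definition--Proposition 4.2.7]{Li20} identifies that explicit sign with $(c, N_{K|F}(\omega))_{F,2}$; and \cite[Theorem 9.2.3]{Li20} is the statement that the calibrated conjugate is Adams-stably conjugate to $\tilde{\delta}$. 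These are precisely the steps your proposal leaves ``to be assembled,'' so as written it asserts the conclusion at the point where the actual work lies.
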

\begin{proof}
	For (i), the existence of $\omega$ has been explained in Lemma \ref{prop:SN-formula}. The choice of $\omega$ is unique up to $(K^{\natural})^{\times}$, hence $N_{K|F}(\omega)$ is unique up to $F^{\times 2}$, and $\mathrm{CAd}(g)(\tilde{\delta})$ is well-defined.
	
	Let $S := G_\delta$. In \cite[Definition--Proposition 4.3.7]{Li20} (with $m=2$ and noting that $\iota_{Q, 2}$ is an isomorphism), one defines
	\begin{itemize}
		\item a group $G^S$ such that $S \subset G^S \subset G$, which is a product of $\SL(2)$ over various finite extensions of $F$,
		\item elements $\mathbf{C}\mathrm{Ad}(g_{\mathrm{ad}})(\tilde{\delta}) \in \tilde{G}^{(2)}$ for $g_{\mathrm{ad}} \in G^S_{\mathrm{ad}}(F)$, where $G^S_{\mathrm{ad}}$ is the adjoint group of $G^S$;
	\end{itemize}
	the latter item equals $\Ad(g_{\mathrm{ad}})(\tilde{\delta})$ times an explicit sign, exploiting the fact that conjugation by $G^S_{\mathrm{ad}}(F)$ makes sense here (see \textit{loc.\ cit.}) We claim that $\mathrm{CAd}(g)(\tilde{\delta})$ is $G(F)$-conjugate to $\mathbf{C}\mathrm{Ad}(g_{\mathrm{ad}})(\tilde{\delta})$ for some $g_{\mathrm{ad}} \in G^S_{\mathrm{ad}}(F)$.
	
	By a comparison of signs using \cite[Definition--Proposition 4.2.7]{Li20}, it suffices to show $\Ad(g_{\mathrm{ad}})(\tilde{\delta})$ is $G(F)$-conjugate to $\Ad(g)(\tilde{\delta})$ in $\tilde{G}^{(2)}$ for some $g_{\mathrm{ad}}$. Indeed, take
	\[ g_{\mathrm{ad}} = (g_i)_i \in G^S_{\mathrm{ad}}(F) \quad\text{such that}\quad \forall i, \; \det(g_{i, 1}) = c \]
	in the terminologies of \textit{loc.\ cit.}, then $\Ad(g_{\mathrm{ad}})$ is conjugation by some $g_1 \in \GSp(W)$ with $\nu(g_1) = c$, but $\Ad(g_1)(\tilde{\delta})$ and $\Ad(g)(\tilde{\delta})$ are conjugate.
	
	By the claim above and \cite[Theorem 9.2.3]{Li20}, we conclude that $\mathrm{CAd}(g)(\tilde{\delta})$ is stably conjugate to $\tilde{\delta}$ in $\tilde{G}^{(2)}$.
\end{proof}

The factor $(N_{K|F}(\omega), c)_{F, 2}$ above is called a calibration factor in \cite{Li20}.

Now consider $\mathbf{G}^! \in \Endo_{\elli}(\tilde{G}^{(2)})$ and suppose that $\gamma = (\gamma', \gamma'') \in G^!_{\mathrm{sreg}}(F)$ corresponds to $\delta \in G_{\mathrm{reg}}(F)$. There are decompositions $K = K' \times K''$ and $K^{\natural} = (K')^{\natural} \times (K'')^{\natural}$, etc.\, as reviewed in \S\ref{sec:conjugacy-class}.

We also have decompositions $K = \prod_{i \in I} K_i$ and $K^{\natural} = \prod_{i \in I} K^{\natural}_i$, so that each $K^{\natural}_i$ is a field, and $K_i$ is either a quadratic field extension of $K_i^{\natural}$ or $K_i^{\natural} \times K_i^{\natural}$, for each $i \in I$. The decompositions respect $K = K' \times K''$ and $K^{\natural} = (K')^{\natural} \times (K'')^{\natural}$; the indexing set $I$ decomposes accordingly into $I' \sqcup I''$ (see \cite[Corollaire 5.5]{Li11} for details).

For each $i \in I_0$, define $\sgn_{K_i|K^{\natural}_i}$ to be the quadratic character of $(K^{\natural}_i)^{\times}$ that is
\begin{itemize}
	\item associated with the extension $K_i|K^{\natural}_i$ if $K_i$ is a field;,
	\item trivial otherwise.
\end{itemize}
Define
\[ \sgn'' := \prod_{i \in I''} \sgn_{K_i | K^{\natural}_i}: (K'')^{\natural, \times} = \prod_{i \in I''} (K^{\natural}_i)^{\times} \to \bmu_2. \]

\begin{lemma}\label{prop:Delta-cocycle}
	For $\mathbf{G}^!$, $\gamma$ and $\delta$ given as above, and $\tilde{\delta} \in \tilde{G}^{(2)}$ such that $\rev^{(2)}(\tilde{\delta}) = \delta$, the transfer factor for $\mathbf{G}^!$ satisfies
	\[ \Delta(\gamma, \mathrm{CAd}(g)(\tilde{\delta})) = \sgn''(c) \Delta(\gamma, \tilde{\delta}) \]
	for all $g \in \GSp(W)$ with $c := \nu(g)$.
\end{lemma}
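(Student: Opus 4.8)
The plan is to establish Lemma \ref{prop:Delta-cocycle} by combining the two preceding lemmas with the cocycle property of metaplectic transfer factors. Recall that by definition $\mathrm{CAd}(g)(\tilde\delta) = (c, N_{K|F}(\omega))_{F,2}\,\Ad(g)(\tilde\delta)$, where $\omega \in K^\times$ satisfies $\omega/\tau(\omega) = x$. Since $\Delta(\gamma, z\tilde\delta) = z\Delta(\gamma, \tilde\delta)$ for $z \in \bmu_8$, the first step is to pull the sign out and write
\[
 \Delta^{\bpsi, \lrangle{\cdot|\cdot}}(\gamma, \mathrm{CAd}(g)(\tilde\delta)) = (c, N_{K|F}(\omega))_{F,2}\,\Delta^{\bpsi, \lrangle{\cdot|\cdot}}(\gamma, \Ad(g)(\tilde\delta)).
\]
Then Lemma \ref{prop:Ad-transport}, read backwards, identifies $\Delta^{\bpsi, \lrangle{\cdot|\cdot}}(\gamma, \Ad(g)(\tilde\delta))$ with $\Delta^{\bpsi_c}(\gamma, \tilde\delta) = \Delta^{\bpsi, c\lrangle{\cdot|\cdot}}(\gamma, \tilde\delta)$; so the whole problem is reduced to comparing the transfer factor for the rescaled symplectic form $c\lrangle{\cdot|\cdot}$ with the one for $\lrangle{\cdot|\cdot}$, both evaluated at the same $\tilde\delta$.

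The heart of the matter is to understand how $\Delta$ depends on rescaling the symplectic form, and this is exactly where the cocycle property \cite[Proposition 5.13]{Li11} enters. That property lets one factor $\Delta = \Delta_0 \cdot \Delta_{\mathrm{something}}$ into a "constant" part built from the correspondence of conjugacy classes and a part sensitive to the covering; the rescaling $\lrangle{\cdot|\cdot} \rightsquigarrow c\lrangle{\cdot|\cdot}$ affects only the Weil-representation character piece $\omega_\bpsi^\pm$ appearing in \cite[\S 5.3]{Li11}. Concretely, I expect the ratio $\Delta^{\bpsi, c\lrangle{\cdot|\cdot}}(\gamma, \tilde\delta)/\Delta^{\bpsi, \lrangle{\cdot|\cdot}}(\gamma, \tilde\delta)$ to be computed as a product over the local factors indexed by $i \in I$ of Weil indices $\gamma_\bpsi$ of quadratic forms associated to the data $(K_i, K_i^\natural, c_i)$ scaled by $c$, and by the dilation behavior of spinor norms recorded in \S\ref{sec:SN-basic} together with Lemma \ref{prop:SN-formula} this collapses to Hilbert symbols. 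After combining with the explicit calibration factor $(c, N_{K|F}(\omega))_{F,2}$ pulled out in the first step, the contributions from $i \in I'$ — those feeding the $\SO(2n'+1)$ factor, where the parametrization of $\gamma'$ uses $x'$ directly — should cancel, leaving only the $i \in I''$ contributions; and for those, because $\gamma''$ is parametrized using $-x''$ (the minus sign flagged in \S\ref{sec:conjugacy-class}), an extra discrepancy survives, which one identifies with $\prod_{i \in I''}\sgn_{K_i|K_i^\natural}(c) = \sgn''(c)$ using the standard relation between the quadratic character of $K_i|K_i^\natural$ and Hilbert symbols/norm groups.

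The main obstacle will be the bookkeeping in this last cancellation: one has to track precisely which Weil-index or Hilbert-symbol factors are absorbed into the "genuine" normalization of $\Delta$ versus which appear from the transport of structure $g\colon (W, c\lrangle{\cdot|\cdot}) \rightiso (W, \lrangle{\cdot|\cdot})$, and to match the sign conventions of \cite[\S 5.3]{Li11} for the two symplectic spaces. I would organize this by first recording, as a sublemma, the exact formula for $\Delta^{\bpsi, c\lrangle{\cdot|\cdot}}/\Delta^{\bpsi, \lrangle{\cdot|\cdot}}$ in terms of the local data $(K_i, K_i^\natural, x_i, c_i)$ — likely a product of terms $(c, *)_{F,2}$ depending on whether $i \in I'$ or $i \in I''$ — invoking \cite[Proposition 5.13]{Li11} to justify that only the per-$i$ Weil-character pieces change. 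With that formula in hand, the proof of Lemma \ref{prop:Delta-cocycle} becomes a short computation: multiply by the calibration factor, use $N_{K|F}(\omega) = \prod_i N_{K_i|F}(\omega_i)$ and the compatibility of norms with the extensions $K_i|K_i^\natural$, and read off $\sgn''(c)$. A secondary subtlety is confirming that $\Delta(\gamma, \mathrm{CAd}(g)(\tilde\delta))$ is even nonzero, i.e.\ that $\mathrm{CAd}(g)(\tilde\delta)$ still has image corresponding to $\gamma$ — but this is immediate since $\mathrm{CAd}(g)(\tilde\delta)$ differs from $\tilde\delta$ only by the central $\bmu_2$-factor, so $\rev^{(2)}$ of it is $\delta$ itself.
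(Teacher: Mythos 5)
There is a genuine gap, and it concerns both the logical order and the way you intend to use the key tool. The paper's proof never compares $\Delta^{\bpsi_c}$ with $\Delta^{\bpsi}$: it works with a single transfer factor and compares its values at the two points $\tilde{\delta}$ and $\mathrm{CAd}(g)(\tilde{\delta})$, which are stably conjugate in Adams' sense by Lemma \ref{prop:st-conj}. The cocycle property \cite[Proposition 5.13]{Li11} is exactly the statement governing this situation: for stably conjugate elements of $\tilde{G}^{(2)}$ the transfer factors differ by the pairing of the relative position $\mathrm{inv}(\delta, \Ad(g)(\delta)) \in \Hm^1(F, G_\delta) \simeq K^{\natural, \times}/N_{K|K^{\natural}}(K^{\times})$ against the character attached to the endoscopic datum $(n', n'')$, which is trivial on the $K'$-components and equals $\sgn_{K_i|K_i^{\natural}}$ on the $K''$-components. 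It is not, as you describe it, a factorization of $\Delta$ into a ``constant'' part and a covering-sensitive part. The two concrete inputs are then (i) the computation $\mathrm{inv}(\delta, \Ad(g)(\delta)) = c\, N_{K|K^{\natural}}(K^{\times})$, imported from \cite[Proposition 3.3.4]{Li20} after replacing $g$ by an element of $G^S_{\mathrm{ad}}(F)$ with the same effect, and (ii) the fact that the calibration sign makes $\mathrm{CAd}(g)(\tilde{\delta})$ stably conjugate to $\tilde{\delta}$ on the covering. Pairing the class of $c$ with the endoscopic character yields $\sgn''(c)$ directly. None of this mechanism appears in your proposal: you never introduce the invariant $\mathrm{inv}(\delta, \Ad(g)(\delta))$ nor the pairing, and $\sgn''(c)$ is instead expected to emerge from an unperformed cancellation of Weil indices and Hilbert symbols.

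Moreover, your reduction via Lemma \ref{prop:Ad-transport} turns the lemma into the assertion that the ratio $\Delta^{\bpsi_c}(\gamma, \tilde{\delta})/\Delta^{\bpsi}(\gamma, \tilde{\delta})$ is an explicit product of Hilbert symbols, which is in substance Proposition \ref{prop:Delta-var} --- precisely the result the paper deduces \emph{from} this lemma (together with Lemmas \ref{prop:SN-formula}, \ref{prop:Ad-transport}, \ref{prop:st-conj}). So your plan inverts the paper's logic, and the central computation is left as a sublemma you only ``expect'' to hold; carrying it out from the definition of $\Delta$ in \cite[\S 5.3]{Li11} (characters of $\omega_{\bpsi}^{\pm}$, $\gamma_{\bpsi}$-factors) would be a substantial independent argument that you have not supplied. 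A small but telling slip at the end: $\mathrm{CAd}(g)(\tilde{\delta})$ differs by a central sign from $\Ad(g)(\tilde{\delta})$, so its image under $\rev^{(2)}$ is $\Ad(g)(\delta)$, not $\delta$; the transfer factor can still be nonzero because the correspondence $\gamma \leftrightarrow \delta$ depends only on stable classes, not because the image is $\delta$ itself.
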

\begin{proof}
	Note that $\Ad(g)(\delta)$ is stably conjugate to $\delta$ in $G(F)$. Set $S := G_\delta$. The point is to describe the ``relative position''  between $\delta$ and $\Ad(g)(\delta)$, defined as a Galois cohomology class
	\[ \mathrm{inv}(\delta, \Ad(g)(\delta)) \in \Hm^1(F, S) \simeq K^{\natural, \times} / N_{K|K^{\natural}}(K^{\times}); \]
	it depends only on $c$, since so does the conjugacy class of $\Ad(g)(\delta)$.

	We use the computation in \cite[Proposition 3.3.4]{Li20}. In the cited result, one does not conjugate by $\GSp(W)$, but by some $(g_i)_i \in G^S_{\mathrm{ad}}(F)$; see the proof of Lemma \ref{prop:st-conj}. Take $(g_i)_i$ so that $\det(g_{i, 1}) = c$ for all $i$ in the terminology therein, then it amounts to conjugation by some $g_1 \in \GSp(W)$ with $\nu(g_1) = c$.
	
	Since $\Ad(g_1)(\delta)$ is conjugate to $\Ad(g)(\delta)$, the cited result says $\mathrm{inv}(\delta, \Ad(g)(\delta)) = c N_{K|K^{\natural}}(K^{\times})$. Since $\mathrm{CAd}(g)(\tilde{\delta})$ is stably conjugate to $\tilde{\delta}$ in $\tilde{G}^{(2)}$ by Lemma \ref{prop:st-conj}, we conclude by the cocycle property \cite[Proposition 5.13]{Li11} of $\Delta$.
\end{proof}

\subsection{Effect on transfer}
Let $c \in F^{\times}$. To $cF^{\times 2}$ is attached a quadratic character $\zeta: \Weil{F} \to \bmu_2 \simeq Z_{\tilde{G}^\vee}$. It also determines a quadratic character of $F^{\times}$, namely $(c, \cdot)_{F, 2}$.

For each $\mathbf{G}^! \in \Endo_{\elli}(\tilde{G}) = \Endo_{\elli}(\tilde{G}^{(2)})$ corresponding to $(n', n'')$, based on \eqref{eqn:Upsilon}, we define the involutions
\begin{align*}
	\Upsilon^\zeta & := \Upsilon^{\zeta, \SO(2n'+1)} \otimes \Upsilon^{\zeta, \SO(2n''+1)}, \\
	\Upsilon_\zeta & := \Upsilon_\zeta^{\SO(2n'+1)} \otimes \Upsilon_\zeta^{\SO(2n''+1)}
\end{align*}
for $S\orbI(G^!) \otimes \mes(G^!)$ and $SD(G^!) \otimes \mes(G^!)^\vee$, respectively; note that for Archimedean $F$, one should take nuclear $\widehat{\otimes}$ in the definitions above.

\begin{proposition}\label{prop:Delta-var}
	Given $\mathbf{G}^! \in \Endo_{\elli}(\tilde{G}^{(2)})$, define the character $s^!_c$ of $G^!(F)$ by
	\[ s^!_c(\gamma) = (c, \mathrm{SN}(\gamma'))_{F, 2} (c, \mathrm{SN}(\gamma''))_{F, 2}, \quad \gamma = (\gamma', \gamma'') \in G^!(F). \]
	For all $\gamma \in G^!_{\mathrm{sreg}}(F)$ and $\delta \in G_{\mathrm{reg}}(F)$ such that $\gamma \leftrightarrow \delta$ together with $\tilde{\delta} \in \tilde{G}^{(2)}$ that maps to $\delta$, we have
	\[ s^!_c(\gamma) \Delta^{\bpsi_c}(\gamma, \tilde{\delta}) = (c, -1)_{F, 2}^{n''} \Delta^{\bpsi}(\gamma, \tilde{\delta}). \]
\end{proposition}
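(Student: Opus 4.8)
The plan is to reduce the asserted identity to the combination of Lemma \ref{prop:Ad-transport} (transport of structure), Lemma \ref{prop:st-conj} (Adams stable conjugacy of the calibrated $\mathrm{CAd}(g)(\tilde\delta)$ with $\tilde\delta$), and Lemma \ref{prop:Delta-cocycle} (the cocycle formula $\Delta(\gamma, \mathrm{CAd}(g)(\tilde\delta)) = \sgn''(c)\,\Delta(\gamma,\tilde\delta)$), plus the explicit spinor-norm description of Lemma \ref{prop:SN-formula}. First I would pick any $g \in \GSp(W)$ with $\nu(g) = c$; by Lemma \ref{prop:Ad-transport} we have $\Delta^{\bpsi_c}(\gamma,\tilde\delta) = \Delta^{\bpsi}(\gamma, \Ad(g)(\tilde\delta))$. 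Parametrize the conjugacy class of $\delta$ by $(K, K^\natural, x, c_{\mathrm{par}})$, choose $\omega \in K^\times$ with $\omega/\tau(\omega) = x$, and recall $\mathrm{CAd}(g)(\tilde\delta) = (c, N_{K|F}(\omega))_{F,2}\,\Ad(g)(\tilde\delta)$. Using the $\bmu_8$-equivariance property $\Delta(\gamma, z\tilde\delta) = z\,\Delta(\gamma,\tilde\delta)$ together with Lemma \ref{prop:Delta-cocycle}, this gives
\[ \Delta^{\bpsi}(\gamma, \Ad(g)(\tilde\delta)) = (c, N_{K|F}(\omega))_{F,2}\,\Delta^{\bpsi}(\gamma, \mathrm{CAd}(g)(\tilde\delta)) = (c, N_{K|F}(\omega))_{F,2}\,\sgn''(c)\,\Delta^{\bpsi}(\gamma,\tilde\delta). \]

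Next I would unwind the left-hand coefficient $s^!_c(\gamma)$. Since $\gamma = (\gamma',\gamma'')$ corresponds to $\delta$, the decomposition $K = K' \times K''$ gives $\omega = (\omega', \omega'')$ with $\omega'$ solving $\omega'/\tau(\omega') = x'$ and $\omega''$ solving $\omega''/\tau(\omega'') = x''$. Now $\gamma'$ is parametrized by $(K', (K')^\natural, x', c')$, so Lemma \ref{prop:SN-formula} gives $\mathrm{SN}(\gamma') = N_{K'|F}(\omega') \bmod F^{\times 2}$; but $\gamma''$ is parametrized with $-x''$ in place of $x''$, so a solution of $\varpi/\tau(\varpi) = -x''$ is needed, and I would relate this to $\omega''$ by writing $\varpi = \theta\,\omega''$ where $\theta/\tau(\theta) = -1$, i.e.\ $\tau(\theta) = -\theta$; then $N_{K''|F}(\varpi) = N_{K''|F}(\theta)\,N_{K''|F}(\omega'')$, so $\mathrm{SN}(\gamma'') = N_{K''|F}(\theta)\,N_{K''|F}(\omega'') \bmod F^{\times 2}$. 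Hence
\[ s^!_c(\gamma) = (c, N_{K'|F}(\omega'))_{F,2}\,(c, N_{K''|F}(\omega''))_{F,2}\,(c, N_{K''|F}(\theta))_{F,2} = (c, N_{K|F}(\omega))_{F,2}\,(c, N_{K''|F}(\theta))_{F,2}. \]
Combining with the displayed formula for $\Delta^{\bpsi}(\gamma,\Ad(g)(\tilde\delta))$, the factor $(c, N_{K|F}(\omega))_{F,2}$ cancels, and it remains to identify
\[ s^!_c(\gamma)\,\Delta^{\bpsi_c}(\gamma,\tilde\delta) = (c, N_{K''|F}(\theta))_{F,2}\,\sgn''(c)\,\Delta^{\bpsi}(\gamma,\tilde\delta) \]
with $(c,-1)_{F,2}^{n''}\,\Delta^{\bpsi}(\gamma,\tilde\delta)$; that is, to prove the purely Hilbert-symbol identity
\[ (c, N_{K''|F}(\theta))_{F,2}\,\sgn''(c) = (c,-1)_{F,2}^{n''}, \]
where $\theta \in (K'')^\times$ is any element with $\tau(\theta) = -\theta$ and $n'' = \tfrac12 \dim_F K''$.

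The main obstacle I expect is precisely this last arithmetic identity, which must be checked factor-by-factor over the decomposition $K'' = \prod_{i \in I''} K_i$. For $i$ with $K_i \simeq K_i^\natural \times K_i^\natural$ split, $\sgn_{K_i|K_i^\natural}$ is trivial and $\theta$ restricts to $(t, -t)$ for some $t$, so $N_{K_i|F}(\theta) = -N_{K_i^\natural|F}(t)^2 \equiv -1 \bmod F^{\times 2}$, contributing $(c,-1)_{F,2}$, consistent with the $K_i$-contribution $2 \cdot \tfrac12\dim K_i^\natural$ to $n''$ being even and the target exponent picking up $\tfrac12\dim_F K_i = \dim_F K_i^\natural$ — here I would need to be careful about parity. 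For $i$ with $K_i/K_i^\natural$ a quadratic field, $\theta$ is a trace-zero element of $K_i$, and the classical fact $N_{K_i|K_i^\natural}(\theta) \cdot (\text{disc of } K_i/K_i^\natural) \in F^{\times 2}$-type relation should identify $(c, N_{K_i|F}(\theta))_{F,2}$ with $\sgn_{K_i|K_i^\natural}(c)\,(c,-1)_{F,2}^{[K_i^\natural : F]}$ via $N_{K_i|F}(\theta) = N_{K_i^\natural|F}(N_{K_i|K_i^\natural}(\theta))$ and $\sgn_{K_i|K_i^\natural}(c) = (c, d_i)_{K_i^\natural, 2}$ pushed down by the norm; the bookkeeping of $[K_i^\natural:F]$ versus $\tfrac12\dim_F K_i = [K_i^\natural:F]$ then has to match. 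So the genuine work is a careful local-field computation with Hilbert symbols, trace-zero elements, and quadratic discriminants, organized along the étale-algebra decomposition; everything structural (transfer factors, calibration, Adams conjugacy) has already been packaged into the three lemmas cited above. I would finish by reassembling these factor-wise identities into the global identity $(c, N_{K''|F}(\theta))_{F,2}\,\sgn''(c) = (c,-1)_{F,2}^{n''}$ and substituting back.
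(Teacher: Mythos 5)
Your proposal follows essentially the same route as the paper: you reduce, via Lemma \ref{prop:Ad-transport}, Lemma \ref{prop:st-conj} and Lemma \ref{prop:Delta-cocycle}, to the Hilbert-symbol identity $(c, N_{K''|F}(\theta))_{F,2}\,\sgn''(c) = (c,-1)_{F,2}^{n''}$, where your $\theta$ is exactly the paper's explicit choice $a''$ ($\sqrt{D_i}$ in field factors, $(1,-1)$ in split ones), and the paper verifies this identity by the same factor-by-factor projection-formula computation you sketch. The only slip is in your split-factor contribution, where $N_{K_i|F}((t,-t)) \equiv (-1)^{[K_i^{\natural}:F]}$ rather than $-1$ modulo squares (the parity issue you yourself flag); with that correction each factor contributes $(c,-1)_{F,2}^{[K_i^{\natural}:F]}$ to both sides and the identity holds, exactly as in the paper's proof.
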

\begin{proof}
	Let $\delta$ be parametrized by $(K, K^{\natural}, x, c)$. Take the decomposition $K = K' \times K''$ and so forth, as in \S\ref{sec:calibration}. Choose $a'' = (a_i)_{i \in I''} \in (K'')^{\times}$ in the following way:
	\begin{itemize}
		\item if $K_i$ is a field, say $K_i = K^{\natural}_i(\sqrt{D_i})$, we take $a_i = \sqrt{D_i}$;
		\item if $K_i \simeq K^{\natural}_i \times K^{\natural}_i$, we take $a_i = (1, -1)$ and set $D_i = 1$.
	\end{itemize}
	Thus $\tau(a'') = -a''$.
	
	Also take $\omega = (\omega', \omega'') \in K^{\times}$ with $\omega/\tau(\omega) = x$. Claim:
	\begin{equation}\label{eqn:Delta-var-0}
		s^!_c(\gamma) = \left(c, N_{K'|F}(\omega') \right)_{F, 2} \left(c, N_{K''|F}(a''\omega'') \right)_{F, 2}.
	\end{equation}
	
	To obtain \eqref{eqn:Delta-var-0}, we use the data above and Lemma \ref{prop:SN-formula} to describe  $\mathrm{SN}(\gamma')$ and $\mathrm{SN}(\gamma'')$, with due care on the $-1$ twist in the correspondence of conjugacy classes; that twist is responsible for the $a''$ factor.
	
	Next, in the notation of Lemma \ref{prop:Delta-cocycle}, we claim that
	\begin{equation}\label{eqn:Delta-var-1}
		\left( c, N_{K''|F}(a'') \right)_{F, 2} \sgn''(c) = (c, -1)_{F, 2}^{n''}.
	\end{equation}
	
	To obtain \eqref{eqn:Delta-var-1}, set $I''_0 := \left\{i \in I'': K_i\;\text{is a field} \right\}$. The choice of $a''$ implies
	\begin{align*}
		\left(c, N_{K''|F}(a'') \right)_{F, 2}
		& = \prod_{i \in I''} \left(c, N_{K^{\natural}_i | F}(-D_i) \right)_{F, 2} \\
		& = \prod_{i \in I''} (c, -D_i)_{K^{\natural}_i, 2}
		= \prod_{i \in I''} (c, -1)_{K^{\natural}_i, 2} \cdot \prod_{i \in I''_0} \sgn_{K_i | K^{\natural}_i}(c) \\
		& = \prod_{i \in I''} (c, -1)_{F, 2}^{[K^{\natural}_i : F]} \cdot \sgn''(c)
		= (c, -1)_{F, 2}^{n''} \sgn''(c)
	\end{align*}
	since $\sum_{i \in I''} [K^{\natural}_i: F] = \frac{1}{2} \sum_{i \in I''} [K_i:F] = n''$. Standard properties of Hilbert symbols are used in the above.
		
	Since $N_{K|F}(\omega) = N_{K'|F}(\omega') N_{K''|F}(\omega'')$, we now obtain
	\begin{align*}
		s^!_c(\gamma) \Delta^{\bpsi_c}(\gamma, \tilde{\delta}) & = s^!_c(\gamma) \Delta^{\bpsi}(\gamma, \Ad(g)(\tilde{\gamma}))  \quad \text{(by Lemma \ref{prop:Ad-transport})} \\
		& = s^!_c(\gamma) \left(c, N_{K|F}(\omega) \right)_{F, 2} \Delta^{\bpsi}(\gamma, \mathrm{CAd}(g)(\tilde{\delta})) \quad \text{(by Lemma \ref{prop:st-conj})} \\
		& = s^!_c(\gamma) \left(c, N_{K|F}(\omega) \right)_{F, 2} \sgn''(c) \Delta^{\bpsi}(\gamma, \tilde{\delta}) \quad \text{(by Lemma \ref{prop:Delta-cocycle})} \\
		& \stackrel{\text{\eqref{eqn:Delta-var-0}}}{=} \left(c, N_{K'|F}(\omega') \right)_{F, 2} \left(c, N_{K''|F}(a''\omega'') \right)_{F, 2} \left(c, N_{K|F}(\omega) \right)_{F, 2} \sgn''(c) \Delta^{\bpsi}(\gamma, \tilde{\delta}) \\
		& = (c, N_{K''|F}(a''))_{F, 2} \sgn''(c) \Delta^{\bpsi}(\gamma, \tilde{\delta}) \\
		& \stackrel{\text{\eqref{eqn:Delta-var-1}}}{=} (c, -1)_{F, 2}^{n''} \Delta^{\bpsi}(\gamma, \tilde{\delta}),
	\end{align*}
	as desired.
\end{proof}

Next, restriction of functions induces an isomorphism $C^\infty_{c, \asp}(\tilde{G}) \rightiso C^\infty_{c, \asp}(\tilde{G}^{(2)})$ between spaces of anti-genuine $C^\infty_c$-functions, preserving orbital integrals. We define $\Trans_{\mathbf{G}^!, \tilde{G}^{(2)}}$ and $\trans_{\mathbf{G}^!, \tilde{G}^{(2)}}$ accordingly, with an exponent to indicate their dependence on additive characters.

\begin{theorem}\label{prop:Trans-var}
	Given $c \in F^{\times}$, define $\zeta, \Upsilon^\zeta, \Upsilon_\zeta$ as before. For each $\mathbf{G}^! \in \Endo_{\elli}(\tilde{G})$ corresponding to $(n', n'') \in \Z_{\geq 0}^2$, we have
	\begin{align*}
		\Upsilon^\zeta \circ \Trans^{\bpsi_c}_{\mathbf{G}^!, \tilde{G}^{(2)}} & = (c, -1)_{F, 2}^{n''} \Trans^{\bpsi}_{\mathbf{G}^!, \tilde{G}^{(2)}}, \\
		\trans^{\bpsi_c}_{\mathbf{G}^!, \tilde{G}^{(2)}} \circ \Upsilon_\zeta & = (c, -1)_{F, 2}^{n''} \trans^{\bpsi}_{\mathbf{G}^!, \tilde{G}^{(2)}}.
	\end{align*}
\end{theorem}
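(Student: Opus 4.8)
The plan is to deduce both identities directly from Proposition \ref{prop:Delta-var}, by unwinding the defining property of the geometric transfer in terms of transfer factors; essentially no new input is needed.

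First I would record two elementary points. By Lemma \ref{prop:SN-formula}, the character $s^!_c$ of $G^!(F)$ is constant on stable conjugacy classes, so multiplication by $s^!_c$ descends to $S\orbI(G^!)$ and acts on stable orbital integrals by $S_{G^!}(\gamma, s^!_c f^!) = s^!_c(\gamma)\, S_{G^!}(\gamma, f^!)$. Under the product structure $G^! = \SO(2n'+1)\times\SO(2n''+1)$ this operator, tensored with $\mes(G^!)$, is exactly the involution $\Upsilon^\zeta = \Upsilon^{\zeta,\SO(2n'+1)} \hat{\otimes}\, \Upsilon^{\zeta,\SO(2n''+1)}$ of \eqref{eqn:Upsilon} (with nuclear $\hat{\otimes}$ when $F$ is Archimedean). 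Moreover $\Upsilon^\zeta$ acts as the identity on the $\mes(G^!)$-factor, and neither transfer map involves a choice of Haar measure, so the measure bookkeeping is the same on both sides and I will suppress it below.

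For the first identity, I would fix $f \in \orbI_{\asp}(\tilde{G}^{(2)})\otimes\mes(G)$ and set $h := (c,-1)_{F,2}^{n''}\,\Upsilon^\zeta\big(\Trans^{\bpsi_c}_{\mathbf{G}^!,\tilde{G}^{(2)}}(f)\big)$. For any $G$-regular $\gamma = (\gamma',\gamma'') \in G^!_{\mathrm{sreg}}(F)$, picking $\delta \in G_{\mathrm{reg}}(F)$ with $\gamma \leftrightarrow \delta$ and any $\tilde\delta \in \tilde{G}^{(2)}$ with $\rev^{(2)}(\tilde\delta) = \delta$, the characterizing property of $\Trans^{\bpsi_c}_{\mathbf{G}^!,\tilde{G}^{(2)}}$ together with the preceding point gives
\[
S_{G^!}(\gamma, h) = (c,-1)_{F,2}^{n''}\, s^!_c(\gamma) \sum_{\delta:\, \gamma\leftrightarrow\delta} \Delta^{\bpsi_c}(\gamma,\tilde\delta)\, I_{\tilde{G}^{(2)}}(\tilde\delta, f).
\]
By Proposition \ref{prop:Delta-var} each summand satisfies $s^!_c(\gamma)\Delta^{\bpsi_c}(\gamma,\tilde\delta) = (c,-1)_{F,2}^{n''}\Delta^{\bpsi}(\gamma,\tilde\delta)$, and since $(c,-1)_{F,2}^{2n''} = 1$ the right-hand side collapses to $\sum_{\delta:\,\gamma\leftrightarrow\delta}\Delta^{\bpsi}(\gamma,\tilde\delta)\,I_{\tilde{G}^{(2)}}(\tilde\delta,f)$. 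Hence $h$ satisfies the identity that characterizes $\Trans^{\bpsi}_{\mathbf{G}^!,\tilde{G}^{(2)}}(f)$, so $h = \Trans^{\bpsi}_{\mathbf{G}^!,\tilde{G}^{(2)}}(f)$; multiplying through by $(c,-1)_{F,2}^{n''}$ yields the first displayed equality. The second equality then follows by transposition: $(c,-1)_{F,2}^{n''}\in\{\pm1\}$ is a scalar, and $\trans^{\bpsi}_{\mathbf{G}^!,\tilde{G}^{(2)}}$, $\trans^{\bpsi_c}_{\mathbf{G}^!,\tilde{G}^{(2)}}$, $\Upsilon_\zeta$ are by definition the duals of $\Trans^{\bpsi}_{\mathbf{G}^!,\tilde{G}^{(2)}}$, $\Trans^{\bpsi_c}_{\mathbf{G}^!,\tilde{G}^{(2)}}$, $\Upsilon^\zeta$, so dualizing the first equality gives it.

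I do not expect a genuine obstacle: the whole weight of the argument rests on Proposition \ref{prop:Delta-var}, which is already proved. The only things demanding care are the identification of multiplication by $s^!_c$ with the tensor-product involution $\Upsilon^\zeta$ across $G^! = \SO(2n'+1)\times\SO(2n''+1)$, and the (implicit) fact that the identity over the $G$-regular locus determines an element of $S\orbI(G^!)\otimes\mes(G^!)$ uniquely --- which is exactly what makes the cited property a genuine characterization of the transfer of \cite{Li11}, rather than a mere constraint.
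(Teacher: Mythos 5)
Your argument is correct and is essentially the paper's own proof: both establish the first identity by evaluating stable orbital integrals against an arbitrary $f$ and $G$-regular $\gamma$, using the defining property of transfer and Proposition \ref{prop:Delta-var} (with $\Upsilon^\zeta$ acting as multiplication by $s^!_c$, which is stably invariant), and both obtain the second identity by dualization. The only cosmetic difference is that you package the computation as a characterization of an auxiliary element $h$ rather than computing $S_{G^!}(\gamma,\Upsilon^\zeta \Trans^{\bpsi_c}_{\mathbf{G}^!,\tilde{G}^{(2)}}(f))$ directly, which changes nothing of substance.
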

\begin{proof}
	It suffices to prove the first equality. The second one follows by dualization.
	
	Given $f \in \orbI_{\asp}(\tilde{G}^{(2)}) \otimes \mes(G)$ and $\gamma \in G^!_{\mathrm{sreg}}(F)$, the definitions of $\Upsilon^\zeta$, transfer and Proposition \ref{prop:Delta-var} imply
	\begin{align*}
		S_{G^!}\left( \gamma, \Upsilon^\zeta \Trans^{\bpsi_c}_{\mathbf{G}^!, \tilde{G}^{(2)}}(f)\right)
		& = s^!_c(\gamma) S_{G^!}\left( \gamma, \Trans^{\bpsi_c}_{\mathbf{G}^!, \tilde{G}^{(2)}}(f)\right) \\
		& = s^!_c(\gamma) \sum_{\delta: \gamma \leftrightarrow \delta} \Delta^{\bpsi_c}(\gamma, \tilde{\delta}) I_{\tilde{G}^{(2)}}(\tilde{\delta}, f) \\
		& = (c, -1)_{F, 2}^{n''} \sum_{\delta: \gamma \leftrightarrow \delta} \Delta^{\bpsi}(\gamma, \tilde{\delta}) I_{\tilde{G}^{(2)}}(\tilde{\delta}, f) \\
		& = (c, -1)_{F, 2}^{n''} S_{G^!}\left( \gamma, \Trans^{\bpsi}_{\mathbf{G}^!, \tilde{G}^{(2)}}(f) \right),
	\end{align*}
	where $\tilde{\delta}$ is any preimage of $\delta$ in $\tilde{G}^{(2)}$. Since $f$ and $\gamma$ are arbitrary, the desired equality follows.
\end{proof}
	
\subsection{Proof of the Theorem \ref{prop:GS}}\label{sec:proof-GS}
A short proof of Theorem \ref{prop:GS} can now be given.

\begin{proof}[Proof of the Theorem \ref{prop:GS}]
	Given $\phi \in \Phi_{\mathrm{bdd}}(\tilde{G})$ and $\chi \in \EuScript{S}_\phi^\vee$, consider the $T_{\phi, s} \in D_-(\tilde{G}) \otimes \mes(G)^\vee$ in \eqref{eqn:T}; it depends only on the image $x \in \EuScript{S}_\phi$ of $s$ by Lemma \ref{prop:T}. Set
	\[ {}^{\star} \pi_{\phi, \chi} := |\EuScript{S}_\phi|^{-1} \sum_{x \in \EuScript{S}_\phi} \chi(x) T_{\phi, x}.  \]
	
	We also view it as an element of $D_-(\tilde{G}^{(2)}) \otimes \mes(G)^\vee$ and denote it by ${}^{\star} \pi_{\phi, \chi}^{\bpsi}$ to emphasize the dependence on $\bpsi$. Claim:
	\[ {}^{\star} \pi^{\bpsi_c}_{\phi\zeta, \chi \delta_c} = {}^{\star} \pi^{\bpsi}_{\phi, \chi}. \]
	Indeed, this is the special case of \cite[Proposition 9.2.3]{Li24a} for bounded L-parameters. The second equality in Theorem \ref{prop:Trans-var} serves as the main input in proving the cited result, thus it does not use the assertion we seek.
	
	On the other hand, Luo's Theorem \ref{prop:Luo-endo} implies that ${}^{\star} \pi^{\bpsi}_{\phi, \chi} = \Tr (\pi^{\bpsi}_{\phi, \chi})$, and ditto for ${}^{\star} \pi^{\bpsi_c}_{\phi\zeta, \chi\delta_c}$. This concludes the proof.
\end{proof}

The arguments above rely on Luo's work \cite{Luo20}, which in turn is based on the Gan--Ichino multiplicity formula \cite[Theorem 1.4]{GI18}. None of these references depend on \cite[Theorem 12.1]{GS1}, so there is no circularity here.

\subsection{Remarks on L-groups}\label{sec:remark-L}
We begin by describing the action of the similitude group on L-packets.

\begin{proposition}\label{prop:Adg-rep}
	Consider $g \in \GSp(W)$ with $c := \nu(g)$. For all $\phi \in \Phi_{\mathrm{bdd}}(\tilde{G})$ and $\chi \in \EuScript{S}_\phi^\vee$, the isomorphism $\Ad(g): \tilde{G}^{\bpsi_c} \rightiso \tilde{G}^{\bpsi}$ in \eqref{eqn:Adg-lift} transports $\pi^{\bpsi}_{\phi, \chi}$ to $\pi^{\bpsi_c}_{\phi, \chi}$.
	
	Let $\zeta = \zeta_c: \Weil{F} \to \bmu_2$ be the character associated with the coset $cF^{\times 2}$ in $F^{\times}$, and $\delta_c$ be as in Definition \ref{def:delta-c}. If we work over $\tilde{G}^{(2)}$, then $\Ad(g)$ transports  $\pi^{\bpsi}_{\phi, \chi}$ to $\pi^{\bpsi}_{\phi\zeta, \chi \delta_c}$ up to isomorphism.
\end{proposition}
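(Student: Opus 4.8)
The plan is to treat the two assertions in turn: for the first I would transport the endoscopic machinery through $\Ad(g)$ and appeal to Luo's characterization (Theorem~\ref{prop:Luo-endo}); for the second I would combine the first assertion with the already-established Theorem~\ref{prop:GS}. To begin, Lemma~\ref{prop:Ad-transport} gives $\Delta^{\bpsi_c}(\gamma,\tilde{\delta}) = \Delta^{\bpsi}(\gamma,\Ad(g)(\tilde{\delta}))$ whenever $\gamma\leftrightarrow\delta$ and $\tilde{\delta}\in\tilde{G}^{\bpsi_c}$ lies over $\delta$. Feeding this into the characterization of geometric transfer in \S\ref{sec:transfer}, and using that $\Ad(g)$ is an $F$-algebraic automorphism of $G$ (hence preserves Weyl discriminants, transports Haar measures and orbital integrals, and carries stable conjugacy classes and the correspondence $\leftrightarrow$ to themselves, all insensitive to $\bpsi$ and $\lrangle{\cdot|\cdot}$), together with the compatibility of the embeddings $\tilde{G}^{(2)}\hookrightarrow\tilde{G}^{\bpsi}$ and $\tilde{G}^{(2)}\hookrightarrow\tilde{G}^{\bpsi_c}$ recorded in \eqref{eqn:Adg-lift}, one deduces
\[ \Trans^{\bpsi}_{\mathbf{G}^!,\tilde{G}^{(2)}} = \Trans^{\bpsi_c}_{\mathbf{G}^!,\tilde{G}^{(2)}}\circ\Ad(g)^{*}, \]
where $\Ad(g)^{*}$ is the automorphism of $\orbI_{\asp}(\tilde{G}^{(2)})\otimes\mes(G)$ induced by pullback along $\Ad(g)\colon\tilde{G}^{(2)}\rightiso\tilde{G}^{(2)}$. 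Dualizing gives $\trans^{\bpsi}_{\mathbf{G}^!,\tilde{G}^{(2)}} = (\Ad(g)^{*})^{\vee}\circ\trans^{\bpsi_c}_{\mathbf{G}^!,\tilde{G}^{(2)}}$, and $(\Ad(g)^{*})^{\vee}$ sends the character of a genuine representation $\pi$ of $\tilde{G}^{(2)}$ to that of $\pi\circ\Ad(g)^{-1}$.

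For the first assertion, recall from \eqref{eqn:T} that $T^{\bpsi}_{\phi,s} = \epsilon(\phi^{s=-1})\,\trans^{\bpsi}_{\mathbf{G}^!,\tilde{G}}\bigl(S\Theta^{G^!}_{\phi^!}\bigr)$, in which neither $\epsilon(\phi^{s=-1})$ nor the stable character $S\Theta^{G^!}_{\phi^!}$ on $G^!(F)$ involves $\bpsi$. Hence the previous step yields $T^{\bpsi}_{\phi,s} = (\Ad(g)^{*})^{\vee}\bigl(T^{\bpsi_c}_{\phi,s}\bigr)$, and after passing to $\EuScript{S}_\phi$ via Lemma~\ref{prop:T}, $T^{\bpsi}_{\phi,x} = (\Ad(g)^{*})^{\vee}\bigl(T^{\bpsi_c}_{\phi,x}\bigr)$ for all $x\in\EuScript{S}_\phi$. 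Expanding both sides through Theorem~\ref{prop:Luo-endo} --- for $\bpsi$ on the left and for $\bpsi_c$ on the right --- and using linear independence of the characters of the finite abelian group $\EuScript{S}_\phi$, one obtains $\Tr(\pi^{\bpsi}_{\phi,\chi}) = \Tr(\pi^{\bpsi_c}_{\phi,\chi}\circ\Ad(g)^{-1})$ for every $\chi$, i.e.\ $\pi^{\bpsi}_{\phi,\chi}\circ\Ad(g)\simeq\pi^{\bpsi_c}_{\phi,\chi}$ as representations of $\tilde{G}^{(2)}$; reading this through \eqref{eqn:Adg-lift} gives the statement over the eightfold coverings. (Two choices of $g$ with $\nu(g)=c$ differ by an element of $\Sp(W)$, whose adjoint action is inner on $\tilde{G}^{(2)}$, so the conclusion is independent of $g$ up to isomorphism.)

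For the second assertion, work over $\tilde{G}^{(2)}$, where the first assertion says $\Ad(g)$ transports $\pi^{\bpsi}_{\phi,\chi}$ to $\pi^{\bpsi_c}_{\phi,\chi}$. Applying Theorem~\ref{prop:GS} with the parameter $\phi\zeta$ in place of $\phi$, and noting that $(\phi\zeta)\zeta=\phi$ since $\zeta^2=1$, while $\delta_c$ computed for $\phi\zeta$ coincides with $\delta_c$ computed for $\phi$ --- immediate from Definition~\ref{def:delta-c}, as $\dim(\phi_i\zeta)=\dim\phi_i$ and every factor there lies in $\bmu_2$ --- one obtains $\pi^{\bpsi_c}_{\phi,\chi}\simeq\pi^{\bpsi}_{\phi\zeta,\chi\delta_c}$. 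Combining with the first assertion, $\Ad(g)$ transports $\pi^{\bpsi}_{\phi,\chi}$ to $\pi^{\bpsi}_{\phi\zeta,\chi\delta_c}$, as claimed.

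The substantive point is the transport statement in the first paragraph: although nothing here is deep, some care is required because, for $g\in\GSp(W)$, the automorphism $\Ad(g)$ of $G$ is not inner over $F$, so its compatibility with Weyl discriminants, Haar measures (modulo $\mes$), stable classes and the correspondence $\leftrightarrow$ must be verified directly, and one must keep straight the identifications among the orbital-integral spaces of $\tilde{G}^{\bpsi}$, $\tilde{G}^{\bpsi_c}$ and $\tilde{G}^{(2)}$ furnished by \eqref{eqn:Adg-lift}. The remaining steps are routine manipulations with Luo's formula, Fourier inversion over $\EuScript{S}_\phi$, and Theorem~\ref{prop:GS}.
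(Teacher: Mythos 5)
Your proposal is correct and follows essentially the same route as the paper: both establish that $\Ad(g)$ transports $T^{\bpsi}_{\phi,s}$ to $T^{\bpsi_c}_{\phi,s}$ (using the $\bpsi$-independence of $\epsilon(\phi^{s=-1})$ and of $S\Theta^{G^!}_{\phi^!}$), conclude the first assertion from Luo's characterization (Theorem~\ref{prop:Luo-endo}) via Fourier inversion over $\EuScript{S}_\phi$, and deduce the second assertion from the first together with Theorem~\ref{prop:GS}. The only difference is cosmetic: where the paper invokes Theorem~\ref{prop:Trans-var} and ``transport of structure in endoscopy via $g$,'' you verify the identity $\Trans^{\bpsi}_{\mathbf{G}^!,\tilde{G}^{(2)}}=\Trans^{\bpsi_c}_{\mathbf{G}^!,\tilde{G}^{(2)}}\circ\Ad(g)^{*}$ directly from Lemma~\ref{prop:Ad-transport} and the defining property of transfer, which is a legitimate unpacking of the same argument.
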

\begin{proof}
	Define $T^{\bpsi}_{\phi, s}$ and $T^{\bpsi_c}_{\phi, s}$ as in \eqref{eqn:T}, the exponent indicating their dependence on additive characters. In view of Theorem \ref{prop:Trans-var}, a transport of structure in endoscopy via $g: (W, c\lrangle{\cdot|\cdot}) \rightiso (W, \lrangle{\cdot|\cdot})$, and the fact that $\epsilon(\phi^{s = -1})$ does not depend on $\bpsi$, we see $T^{\bpsi}_{\phi, s}$ is transported to $T^{\bpsi_c}_{\phi, s}$ by $\Ad(g)$. The characterization in Theorem \ref{prop:Luo-endo} implies that the same holds for $\pi^{\bpsi}_{\phi, \chi}$ and $\pi^{\bpsi_c}_{\phi, \chi}$, up to isomorphism.
	
	The second assertion follows from the first one and Theorem \ref{prop:GS}.
\end{proof}

In fact, the above holds when $\phi$ is generalized to an Arthur parameter (see \cite{Li24a}), with the same proof.

\begin{remark}\label{rem:Adg-rep}
	The action of $\GSp(W)$ on $G$ descends to the adjoint group $G_{\mathrm{ad}}(F)$. For quasi-split connected reductive groups, such an action is expected to preserve L-parameter and shift the character of component group in a precise way, see \cite[\S 9]{GGP1}. In our case,
	\begin{itemize}
		\item the L-parameter gets shifted by $\zeta = \zeta_c$,
		\item the shift for characters of $\EuScript{S}_\phi$ in \textit{loc.\ cit.}\ no longer works. 
	\end{itemize}
	
	The first shift can be explained by Weissman's formalism \cite{Weis18}. This is done in \cite[Theorem 11.1]{GG}, which we rephrase below. What is canonically defined is just the L-group $\Lgrp{\tilde{G}^{(2)}}$. By \cite[Lemma 5.2.1]{Li20}, splittings $\Lgrp{\tilde{G}^{(2)}} \simeq \tilde{G}^\vee \times \Weil{F}$ exist, but they depend on the datum $(W, \lrangle{\cdot|\cdot})$ where $\lrangle{\cdot|\cdot}$ is taken up to $F^{\times 2}$; equivalently, they depend on $G(F)$-conjugacy classes of $F$-pinnings.
	
	After applying $\Ad(g)$, the datum $(W, \lrangle{\cdot|\cdot})$ is changed to $(W, c\lrangle{\cdot|\cdot})$. By \cite[Lemma 5.2.2]{Li20}, the splittings of $\Lgrp{\tilde{G}^{(2)}}$ differ by
	\[ \tilde{G}^\vee \times \Weil{F} \rightiso \tilde{G}^\vee \times \Weil{F}, \quad (\check{g}, w) \mapsto (\check{g}\zeta(w), w) . \]
	Hence $\zeta$ appears naturally in Proposition \ref{prop:Adg-rep} if one uses the dual group instead of the L-group.
\end{remark}

Hereafter, restrict $\Ad(g)$ to $\tilde{G}^{(2)}$ in \eqref{eqn:Adg-lift} and assume $c = -1$. This yields the well-known MVW involution on $\tilde{G}^{(2)}$ that transports every genuine irreducible representation $\pi$ of $\tilde{G}^{(2)}$ to its contragredient $\pi^\vee$, up to isomorphism; see \cite[p.36, p.92]{MVW87}.

\begin{corollary}\label{prop:contragredient}
	For all $\phi \in \Phi_{\mathrm{bdd}}(\tilde{G})$ and $\chi \in \EuScript{S}_\phi^\vee$, we have
	\begin{equation*}
		\left(\pi^{\bpsi}_{\phi, \chi}\right)^\vee \simeq \pi^{\bpsi}_{\phi\zeta_{-1}, \chi\delta_{-1}}.
	\end{equation*}
\end{corollary}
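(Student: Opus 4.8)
The plan is to specialize Proposition \ref{prop:Adg-rep} to the case $c = -1$ and to invoke the identification of the resulting automorphism of $\tilde{G}^{(2)}$ with the MVW involution. Concretely: pick any $g \in \GSp(W)$ with similitude factor $\nu(g) = -1$ (for instance, the involution negating one summand of a symplectic basis decomposition, or more simply an element acting as $-\identity$ on a Lagrangian and $+\identity$ on a complementary one, rescaled appropriately). By \cite[p.~36, p.~92]{MVW87}, the automorphism $\Ad(g)$ of $\tilde{G}^{(2)}$ obtained in \eqref{eqn:Adg-lift} is, up to inner automorphisms, the MVW involution, and it has the property that for every genuine irreducible $\pi$ of $\tilde{G}^{(2)}$, the twist $\pi \circ \Ad(g)$ is isomorphic to the contragredient $\pi^\vee$.

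The key steps in order: first, recall that $\pi^\vee \simeq \pi \circ \Ad(g)$ for the chosen $g$ with $\nu(g) = -1$; this is the MVW property and requires no new argument here since it is quoted just before the statement. Second, apply the second assertion of Proposition \ref{prop:Adg-rep} with $c = -1$: the automorphism $\Ad(g)$ transports $\pi^{\bpsi}_{\phi, \chi}$ to $\pi^{\bpsi}_{\phi\zeta_{-1}, \chi\delta_{-1}}$ up to isomorphism. Combining the two gives $\left(\pi^{\bpsi}_{\phi, \chi}\right)^\vee \simeq \pi^{\bpsi}_{\phi, \chi} \circ \Ad(g) \simeq \pi^{\bpsi}_{\phi\zeta_{-1}, \chi\delta_{-1}}$, which is exactly the claim. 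One small bookkeeping point to check is the direction of the transport: \quoted{$\Ad(g)$ transports $\pi^{\bpsi}_{\phi,\chi}$ to $\pi^{\bpsi}_{\phi\zeta,\chi\delta_c}$} should be read so that precomposition $\pi^{\bpsi}_{\phi\zeta_{-1},\chi\delta_{-1}} \circ \Ad(g) \simeq \pi^{\bpsi}_{\phi,\chi}$, equivalently $\pi^{\bpsi}_{\phi,\chi} \circ \Ad(g)^{-1} \simeq \pi^{\bpsi}_{\phi\zeta_{-1},\chi\delta_{-1}}$; since $\Ad(g)$ is an involution (as $g^2$ is central and $\nu(g^2)=1$, so $\Ad(g)^2 = \identity$ on $\tilde{G}^{(2)}$ up to the unique-lifting rigidity), this makes no difference, and likewise $\delta_{-1}^2 = 1$ and $\zeta_{-1}^2 = 1$ so the twisting data are self-inverse.

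I expect the only genuine subtlety — hence the main obstacle if there is one — to be matching conventions: ensuring that the $g$ realizing the MVW involution in \cite{MVW87} is the \emph{same} $\Ad(g)$, with $\nu(g)=-1$, that enters diagram \eqref{eqn:Adg-lift}, and that the canonical lift chosen there (via transport of structure on Schrödinger models) agrees with the MVW automorphism on the nose rather than up to a character twist. Since $G(F)$ is its own derived group, any two lifts of $\Ad(g)$ to $\tilde{G}^{(2)}$ inducing the identity on $\bmu_2$ coincide, so this rigidity resolves the ambiguity; the verification is therefore routine once phrased correctly. Accordingly the proof is short: cite MVW for $\pi^\vee \simeq \pi \circ \Ad(g)$ with $\nu(g) = -1$, cite Proposition \ref{prop:Adg-rep} for the effect of $\Ad(g)$ on the parameters with $c = -1$, and combine.
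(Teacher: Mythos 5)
Your proposal is correct and follows essentially the same route as the paper: specialize Proposition \ref{prop:Adg-rep} to $c=-1$ and combine with the MVW involution's property that $\pi\circ\Ad(g)\simeq\pi^\vee$ for $\nu(g)=-1$, exactly as in the paper's one-line proof. The convention-matching points you raise (uniqueness of the lift since $G(F)$ is its own derived group, and involutivity of the twisting data) are sound and consistent with the paper's setup.
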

\begin{proof}
	Combine the discussion above with Proposition \ref{prop:Adg-rep}.
\end{proof}

A description of contragredient representations in terms of enhanced L-parameters $(\phi, \chi)$ for a quasi-split connected reductive group $R$ is proposed by D.\ Prasad in \cite[Conjecture 2]{Pra18}; it involves the Chevalley involution $c_{R^\vee}$ on the dual side. Note that $c_{\Sp(2n, \CC)} = \identity$; see p.5 of \textit{loc.\ cit.}

As in Remark \ref{rem:Adg-rep}, Prasad's recipe for characters of $\EuScript{S}_\phi$ cannot carry over to $\tilde{G}^{(2)}$. For the L-parameter of the contragredient, he predicts that it differs from the original one by the Chevalley involution $\Lgrp{c}$ of the L-group.

The idea here is that contragredients live on the opposite twofold covering determined by $(W, -\lrangle{\cdot|\cdot})$, although that is uniquely isomorphic to $\tilde{G}^{(2)}$. The appropriate definition of $\Lgrp{c}$ for $\tilde{G}^{(2)}$ should render
\begin{equation*}\label{eqn:Lgrp-c}
	\begin{tikzcd}
		\Lgrp{\tilde{G}}^{(2)} \arrow[d, "a"'] \arrow[r, "{\Lgrp{c}}"] & \Lgrp{\tilde{G}}^{(2)} \arrow[d, "b"] \\
		\Sp(2n, \CC) \times \Weil{F} \arrow[r, "{(c_{\Sp(2n, \CC)} = \identity, \identity)}"' inner sep=0.8em] & \Sp(2n, \CC) \times \Weil{F}
	\end{tikzcd} 
\end{equation*}
commutative, where $a$ (resp.\ $b$) is the L-isomorphism associated with $(W, \lrangle{\cdot|\cdot})$ (resp.\ $(W, -\lrangle{\cdot|\cdot})$).

As seen earlier, if one splits the L-group via $a$ everywhere, without using $b$, then
\[ \Lgrp{c}(\check{g}, w) = (\check{g}\zeta_{-1}(w), w). \]
This agrees with Corollary \ref{prop:contragredient}.

\printbibliography[heading=bibintoc]

\vspace{1em}
\begin{flushleft} \small
	W.-W. Li: Beijing International Center for Mathematical Research / School of Mathematical Sciences, Peking University. No.\ 5 Yiheyuan Road, Beijing 100871, People's Republic of China. \\
	E-mail address: \href{mailto:wwli@bicmr.pku.edu.cn}{\texttt{wwli@bicmr.pku.edu.cn}}
\end{flushleft}

\end{document}